\newtheorem{thm}{Theorem}[section]
\newtheorem{lem}[thm]{Lemma}
\newtheorem{prop}[thm]{Proposition}
\newtheorem{corr}[thm]{Corollary}
\DeclareMathOperator{\var}{Var}
\let\P\relax\DeclareMathOperator{\P}{\mathbb{P}} % better spacing when writing d\P, for instance
\DeclareMathOperator{\E}{\mathbb{E}}
\def\R{\mathbb{R}}
\def\N{\mathbb{N}}
\def\diffd{\mathrm{d}}
\newcommand{\indic}[1]{\mathds{1}\raisebox{-.4ex}{$\scriptstyle\{#1\}$}}
\def\uppar#1{^{\scriptscriptstyle(#1)}}
\def\setN{\mathcal N\uppar N}
\def\set+{\mathcal N^{+}}
\def\vl{v^\ell }
\def\B{{\mathcal B}}
\newcommand{\p}[1]{{\mathbb P}\left(#1\right)}
\newcommand{\psub}[2]{{\mathbb P}_{#1}\left(#2\right)}
\newcommand{\Esub}[2]{{\mathbb E}_{#1}\left[#2\right]}
\def\ZZint#1#2#3{{\setbox0=\hbox{$#1{#2#3}{\int}$}
\vcenter{\hbox{$#2#3$}}\kern-.5\wd0}}
\begin{document}

\allowdisplaybreaks

\author{Julien Berestycki\footnote{Department of Statistics, University of Oxford, UK}, Éric Brunet\footnote{Laboratoire de Physique de l’\'Ecole normale sup\'erieure, ENS, Universit\'e PSL, CNRS, Sorbonne Universit\'e, Universit\'e de Paris, F-75005 Paris, France}, James Nolen\footnote{Department of Mathematics, Duke University Box 90320, Durham, NC 27708, USA}, Sarah Penington\footnote{Department of Mathematical Sciences, University of Bath, UK}}

\title{Brownian bees in the infinite swarm limit}

\date{\today}
\maketitle

\begin{abstract}
The {\it Brownian bees} model is a branching particle system with spatial selection. It is a system of $N$ particles which move as independent  Brownian motions in $\R^d$ and independently branch at rate 1, and, crucially, at each branching event, the particle which is the furthest away from the origin is removed to keep the population size constant. In the present work we prove that as $N \to \infty$ the behaviour of the particle system is well approximated by the solution of a free boundary problem (which is the subject of a companion paper \cite{BBNP1}), the  {\it hydrodynamic limit} of the system. We then show that for this model the so-called {\it selection principle} holds, i.e.\@ that as $N \to \infty$ the equilibrium density of the particle system converges to the steady state solution of the free boundary problem.
\end{abstract}

%%%%%%%%%%%%%%%%%%%%%%%%%%%%%%%%%%%%

\section{Introduction and main results}

The {\it Brownian bees} model is a particular case of an $N$-particle branching Brownian motion ($N$-BBM for short) which is defined as follows. 
The system consists of $N$ particles with locations in $\R^d$ for some dimension $d$.
Each particle moves independently according to a Brownian motion with diffusivity $\sqrt 2$ and branches independently into two particles at rate one.  
Whenever a particle in the system branches, the particle in the system which is furthest (in Euclidean distance) from the origin is immediately removed from the system, so that there are exactly $N$ particles in the system at all times. Thus, the branching events arrive according to a Poisson process with rate $N$. 
The name {\em Brownian bees}, suggested by Jeremy Quastel, comes from the analogy with bees swarming around a hive; throughout the paper, we will refer to this process simply as $N$-BBM. 

The particles can be labelled in a natural way, which will allow us to write the $N$-BBM as a c\`adl\`ag $(\R^d)^N$-valued process.
Each of the $N$ particles carries a label from the set $\{1,\ldots, N\}$.
Suppose at some time $\tau$ the particle labelled $k$ branches, and the particle with label $\ell$ is the furthest from the origin.
Then at time $\tau$ the particle with label $\ell$ is removed from the system, and a new particle with label $\ell$ appears at the location of the particle with label $k$.
For $k\in \{1,\ldots, N\}$, let $X\uppar N _k(t)$ denote the location of the particle with label $k$ at time $t$.
Then
\[
X\uppar N(t)=\big(X\uppar N_1(t),\ldots , X\uppar N_N(t)\big)
\]
is the vector of particle locations in the $N$-BBM at time $t$.
This labelling is motivated by the following equivalent description of $N$-BBM in terms of jumping rather than branching: at rate $N$, the particle that is furthest from the origin jumps to the location of another particle chosen uniformly at random from all $N$ particles. (The particular choice of ordering of the particles here plays no essential role in our results.)

We shall prove two types of result about this interacting system of particles: results about the spatial distribution of particles at a fixed time $t$ as the number of particles $N\to \infty$, and results about the long-term behaviour of the particle system as time $t\to \infty$ for a fixed large number of particles $N$.  As we show, there is a sense in which these limits commute.  Showing this so-called {\it selection principle} holds is a major motivation of the present work and was originally conjectured by Nathana\"{e}l Berestycki. More precisely, he predicted that as $N\to \infty$, the particles would localise in a ball of finite radius at large times.  

Our first main result is a hydrodynamic limit for the distribution of particle locations at a fixed time $t$ as the number of particles $N\to \infty$. This limit involves the solution of the following free boundary problem: for a probability measure $\mu_0$ on $\R^d$, find $u(x,t)
: \R^d\times (0,\infty) \to [0,\infty)$ and $R_t:(0,\infty) \to [0,\infty]$  such that
\begin{equation}
\begin{cases}
\partial_t u =  \Delta u  +u ,  & \text{for $t>0$ and  $\|x\| < R_t$},   \\
u(x,t)=0, &\text{for $t>0$ and $\|x\|\ge R_t$}, \\
u(x, t) \quad \text{is continuous on $\R^d \times (0,\infty)$}, \\[1ex]
 \displaystyle \int_{\R^d} u(x,t)\, \diffd x =1, & \text{for } t > 0,
    \\[1ex]
u(\cdot,t)\to \mu_0 &\text{weakly as $t \searrow 0$}.
\end{cases}
\label{pbu_probpaper}
\end{equation}
In the companion paper~\cite{BBNP1}, we prove that~\eqref{pbu_probpaper} has a unique solution $(u,R)$, and that the function $R_t$ is finite and continuous for $t>0$. 

For $t\ge 0$, we let
$$
M\uppar N_t=\max_{i\in \{1,\ldots,N\}}\Big\|X_i\uppar N(t)\Big\|
$$
denote the maximum distance of a particle from the origin at time $t$. For
$A\subseteq \R^d$ measurable, we let
$$\mu\uppar N(A,t)=\frac 1N\Big|\Big\{i\in\{1,\ldots,N\}\;:\;
X_i\uppar N(t)\in A\Big\}\Big|$$
denote the proportion of particles which are in the set $A$ at time $t$.
In other words, $\mu\uppar N(\diffd x,t)$ is the empirical measure of the particles at time $t$, i.e.
\[
\mu\uppar N(\diffd x,t) = \frac{1}{N} \sum_{k=1}^N \delta_{X\uppar N_k(t)}(\diffd x).
\]
We can now state our hydrodynamic limit result.
\begin{thm}\label{thm:NBBM}
Suppose that $\mu_0$ is a Borel probability measure on $\R^d$, and that
\begin{itemize}
\item $X\uppar N_1(0),\ldots,X\uppar N_N(0)$ are i.i.d.~with distribution
given by $\mu_0$, and
\item $(u,R)$ is the solution to~\eqref{pbu_probpaper} with initial condition
$\mu_0$.
\end{itemize}
Then, for any $t>0$ and any measurable $A\subseteq \R^d$, almost surely,
$$
\mu\uppar N (A,t) \to \int_A u(x,t)\,\diffd x \quad \text{and} \quad M\uppar N _t \to R_t \quad \text{ as }N \to \infty
$$
(this holds for any coupling of the processes $(X\uppar N)_{N\in \N}$).
\end{thm}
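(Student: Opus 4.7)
My plan is to combine a time-discretized auxiliary process (which decouples free branching Brownian motion from the selection step) with a monotone coupling, and then invoke the PDE analysis of the companion paper \cite{BBNP1} to identify the limit. Fix $\delta>0$ and set $t_k=k\delta$. Let $\tilde X\uppar N$ be the $N$-particle process that on each interval $[t_k,t_{k+1})$ runs free BBM (diffusivity $\sqrt 2$, rate-$1$ binary branching, no selection) starting from its time-$t_k$ configuration, and at time $t_{k+1}^-$ keeps only the $N$ particles closest to the origin; write $\tilde \mu\uppar N(\cdot,t)$ for its empirical measure. The analogous deterministic scheme $\tilde u_\delta$ iterates as follows: given $\tilde u_\delta(\cdot,t_k)$, let $v_k$ solve $\partial_t v=\Delta v+v$ on $\R^d\times[0,\delta]$ with $v_k(\cdot,0)=\tilde u_\delta(\cdot,t_k)$, and set $\tilde u_\delta(\cdot,t_{k+1}) = v_k(\cdot,\delta)\indic{\|\cdot\|\le \tilde R_{k+1}}$, where $\tilde R_{k+1}$ is chosen so that the result has unit mass.

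The first step is a law of large numbers for $\tilde X\uppar N$. Conditional on the time-$t_k$ configuration, the particles on $[t_k,t_{k+1})$ evolve as independent BBMs, so the expected (normalized) empirical measure at $t_{k+1}^-$ equals the semigroup $e^{\delta(\Delta+1)}$ applied to $\tilde\mu\uppar N(\cdot,t_k)$, with fluctuations of order $N^{-1/2}$ against bounded test functions by independence across starting points. By induction on $k$, combined with continuous dependence of the scheme on its initial data (established in \cite{BBNP1}), one obtains $\tilde\mu\uppar N(\cdot,t_k)\to \tilde u_\delta(\cdot,t_k)$ almost surely, along with convergence of the auxiliary maximum radius to $\tilde R_k$.

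Next, I would couple $X\uppar N$ and $\tilde X\uppar N$ so that they share the same underlying Brownian motions and branching times, differing only in when selection is applied (continuously versus in batches at $t_k$). The monotonicity of ``keep the $N$ closest to the origin'' in the radial ordering gives a pathwise bound on the symmetric difference of the two configurations by the number of branching events in $[t_k,t_{k+1}]$ whose outcomes (removal of the current furthest particle versus deferred culling at $t_{k+1}$) disagree; this quantity is shown to be $o(N)$ as $\delta\to 0$ uniformly in $N$ using the continuity of $R_t$ from \cite{BBNP1}. Combining this with Step 1 and the scheme convergence $\tilde u_\delta(\cdot,t)\to u(\cdot,t)$, $\tilde R_k\to R_t$ (when $t_k\to t$) from \cite{BBNP1}, via an $N\to\infty$ then $\delta\to 0$ diagonal argument, yields $\mu\uppar N(A,t)\to \int_A u(x,t)\,\diffd x$. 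The convergence $M\uppar N_t\to R_t$ then follows by applying the empirical measure convergence to balls $B(0,R_t\pm\varepsilon)$ and using strict positivity of $u(\cdot,t)$ on $B(0,R_t)$ from \cite{BBNP1}: the outer bound forces $M\uppar N_t\le R_t+\varepsilon$ eventually, and the inner bound forces $M\uppar N_t\ge R_t-\varepsilon$ eventually.

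The main technical obstacle is the coupling of Step 2. The pathwise matching must carefully pair each branching event in the true $N$-BBM (immediately compensated by the removal of the current furthest particle) with the corresponding eventual culling in the auxiliary process. In dimension $d\ge 2$, particles at the same radius are interchangeable for the selection rule, complicating the matching and requiring moment bounds on the number of branchings landing in a thin shell near the instantaneous free boundary; these estimates should ultimately rest on a priori tightness of $M\uppar N_t$ together with the regularity of $R_t$ proved in \cite{BBNP1}.
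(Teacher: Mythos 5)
Your proposal takes a different route from the paper, and while some pieces are in the right spirit, there are two substantive gaps.

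\paragraph{How the paper actually proceeds.} The paper's proof passes through a quantitative \emph{one-dimensional} hydrodynamic limit for the radial CDF $F\uppar N(r,t)$ (Proposition~\ref{prop:NBBM1d}), proved via a time discretization of the \emph{obstacle problem}~\eqref{pbv_probpaper} and two one-sided couplings: the standard coupling giving $X^+(t)\preceq X\uppar N(t)$ (upper bound on $F\uppar N$), and a separate Bessel-type coupling of the $N$-BBM with a BBM started from a strictly smaller initial configuration (lower bound). That 1D estimate, combined with continuity of $R_t$, yields a uniform-in-time bound $M\uppar N_t\le R_t+\eta$ with high probability (Proposition~\ref{prop:bound on M}). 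Only then does the $d$-dimensional statement follow, by embedding $X\uppar N(t)$ inside the set $\mathcal C_{\eta,t}$ of free-BBM particles whose ancestors stayed within $\|x\|\le R_s+\eta$, applying the many-to-one lemma and the free-boundary representation $u(y,t)=e^t\int\mu_0(\diffd x)\rho_t(x,y)$, and controlling fluctuations with a fourth-moment bound. Your plan instead attempts a direct $d$-dimensional coupling between $X\uppar N$ and a batch-culled auxiliary process $\tilde X\uppar N$. This is a genuinely different decomposition.

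\paragraph{Gap 1: the coupling/symmetric-difference step.} Your Step 2 asserts that sharing Brownian paths and branching times gives a pathwise bound on the symmetric difference of $X\uppar N$ and $\tilde X\uppar N$ by the number of ``disagreeing'' branching events, and that this is $o(N)$ uniformly in $N$ as $\delta\to0$. Neither claim is established, and I don't see how to make them work. The set of survivors under continuous culling is a complicated history-dependent function: a single early disagreement propagates to the whole set of subsequent removals, so the symmetric difference is not locally controlled by counting discrepant events. Moreover the key monotone relation $\preceq$ used in the paper (which compares configurations by particle \emph{counts} in balls, not by particle identity) does not obviously compare $X\uppar N(t_{k+1})$ with $\tilde X\uppar N(t_{k+1})$ in either direction: both are size-$N$ subsets of $X^+(t_{k+1})$ chosen by different rules, and the culled set of the auxiliary process (``$N$ closest'') is not the same as the $N$-BBM survivors. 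Finally, the continuity of $R_t$ from~\cite{BBNP1} concerns the limiting free boundary, not the auxiliary $\delta$-scheme, and gives no uniform-in-$N$ estimate on disagreement counts. The paper circumvents all of this by never trying to control the symmetric difference: the radial discretization works with operators $G_\delta, C_m$ on CDFs, where the cut $C_m$ and the one-sided couplings give clean monotone bounds, and everything is funneled through Proposition~\ref{prop:bound on M}.

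\paragraph{Gap 2: deducing $M\uppar N_t\to R_t$.} Your last step claims the upper bound $M\uppar N_t\le R_t+\varepsilon$ ``eventually'' follows from applying the empirical measure convergence to $\B(R_t+\varepsilon)$. This is false: convergence of $\mu\uppar N(\B(R_t+\varepsilon),t)\to 1$ controls the \emph{fraction} of particles outside the ball, not the maximum. A single particle far away (i.e.~a proportion $1/N\to0$) is invisible to the empirical measure. The inner bound $M\uppar N_t\ge R_t-\varepsilon$ does follow from empirical convergence plus $u>0$ on $\B(R_t)$, as you say, but the outer bound requires a separate argument — in the paper, Proposition~\ref{prop:bound on M}, which in turn rests on the quantitative 1D hydrodynamic limit and the fact that with overwhelming probability a new cloud of order $N$ particles stays within $\tfrac13\epsilon^{1/3}$ of its starting positions over a short window (Lemmas~\ref{ZbigBsmall}--\ref{lemAsup}). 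This is precisely the point at which the uniform, non-i.i.d., quantitative form of Proposition~\ref{prop:NBBM1d} is essential. Your outline contains no analogue of this step.

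\paragraph{Minor.} Your Step~1 relies on continuous dependence of the discretized scheme on its initial data, citing~\cite{BBNP1}; the companion paper establishes this for the radial CDF $v$ (Lemma~\ref{lem:cont init cond_probpaper}), not for the $d$-dimensional density $\tilde u_\delta$. You would need a separate argument (or reduce to the radial problem, which is what the paper does). You also do not invoke the many-to-one lemma, which is the mechanism by which the paper actually identifies the limit with the free-boundary solution $u$.

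In summary: the discretization idea is in the same neighborhood as what the paper uses for the 1D intermediate result, but your plan to carry it out directly in $d$ dimensions via a symmetric-difference coupling has a real gap, and the claimed deduction of the $M\uppar N_t$ convergence from the empirical measure convergence is incorrect in the upper direction.
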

Note that 
Theorem~\ref{thm:NBBM} implies that for $t>0$, almost surely
$\mu\uppar N (\diffd x,t) \to u(x,t)\,\diffd x$ weakly as $N\to \infty$.

Our second set of results concerns the long-term behaviour ($t \to \infty$) of the particle system for large $N$. We can show that for large fixed $N$, the particle system converges in distribution as $t\to \infty$ to an invariant measure.  For $\mathcal X \in (\R^d)^N$, we write $\mathbb P_{\mathcal X}$ to denote the probability measure under which $(X\uppar N (t),t\ge 0)$ is an $N$-BBM process with $X\uppar N (0)=\mathcal X$.

\begin{thm} \label{thm:piNexists}
For $N$ sufficiently large, the process $(X\uppar N(t),t\geq 0)$ has a unique invariant measure $\pi \uppar N$, a probability measure on $(\R^d)^N$.
For any $\mathcal X\in (\R^d)^N$, under $\mathbb P_{\mathcal X}$,
the law of
$X\uppar N(t)$ converges in total variation norm to $\pi\uppar N$ as $t\to \infty$:
\[
\lim_{t \to \infty} \sup_{C} \big|\psub{\mathcal X}{X\uppar N(t) \in C}
- \pi\uppar N (C)\big| = 0,
\]
where the supremum is over all Borel measurable sets $C \subseteq (\R^d)^N$. 
\end{thm}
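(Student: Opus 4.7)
\medskip

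\noindent\textbf{Plan.} My approach is to verify the hypotheses of Harris' ergodic theorem for the time-$t_0$ skeleton chain $(X\uppar N(kt_0))_{k\in\N}$: a Lyapunov drift condition and a minorization (Doeblin) condition on the sublevel sets of the Lyapunov function. Together these yield existence and uniqueness of an invariant measure $\pi\uppar N$ for the skeleton, along with geometric convergence in total variation; the extension to continuous time is automatic from the semigroup property and the TV contraction $\|P_s\mu\|_{TV}\le\|\mu\|_{TV}$, and uniqueness for the semigroup follows since any invariant measure for $(P_t)$ is invariant for each skeleton.

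\medskip

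\noindent\textbf{Minorization.} Fix $t_0>0$. The event that no branching occurs in $[0,t_0]$ has probability exactly $e^{-N t_0}$ and on this event the process is an $N$-tuple of independent Brownian motions. Hence the transition kernel satisfies
\[
P_{t_0}(\mathcal X,A)\;\ge\; e^{-Nt_0}\int_A \prod_{i=1}^N p_{t_0}\!\big(x_i,y_i\big)\,\diffd y,
\]
where $p_{t_0}$ is the Gaussian heat kernel. For $\mathcal X$ ranging over a compact set $K\subset(\R^d)^N$, the integrand is jointly continuous and strictly positive on $K\times K'$ for any compact $K'$, giving a uniform minorization $P_{t_0}(\mathcal X,\cdot)\ge \varepsilon_K\,\nu_K(\cdot)$ on $K$ by a nonzero subprobability measure $\nu_K$.

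\medskip

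\noindent\textbf{Lyapunov drift.} I would work with $V(\mathcal X)=\max_{i\le N}\|x_i\|^2$ (or a smooth $p$-norm approximation to facilitate It\^o calculus) and aim to prove that, for $t_0$ fixed and $N$ sufficiently large,
\[
\Esub{\mathcal X}{V(X\uppar N(t_0))}\;\le\; \gamma\,V(\mathcal X)+K_0,\qquad \gamma\in(0,1),\; K_0<\infty,
\]
uniformly in $\mathcal X$. The contraction mechanism is the pruning rule: branching occurs at rate $N$, and at each event the furthest particle is replaced by a copy of a uniformly chosen particle, so the maximum norm drops (with probability $1-1/N$) from $\|x_{(1)}\|$ to the current second order statistic $\|x_{(2)}\|$. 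Between events the Brownian motion only grows $\max_i\|X_i\|$ by an amount of order $\sqrt{t_0\log N}$. To convert this heuristic into a genuine drift estimate, I would combine two ingredients. First, Brownian dispersion in a short initial interval separates the order statistics enough that subsequent branching events produce a uniformly positive expected contraction; second, to handle configurations where $V(\mathcal X)$ is very large (so the hydrodynamic limit does not directly apply), I would use a coupling/comparison argument: inject the "real" process alongside an auxiliary one started from a truncation of $\mathcal X$ onto a large ball, control their difference via the common Brownian drivers, and use the fact that the branching/selection rule rapidly erases the effect of the outlier initial particles.

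\medskip

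\noindent\textbf{Conclusion and main obstacle.} Given the minorization on each compact sublevel set $\{V\le M\}$ and the Lyapunov drift, Harris' theorem (e.g.\ in the form of Meyn--Tweedie or the Hairer--Mattingly framework) delivers a unique invariant probability $\pi\uppar N$ for the skeleton and a geometric TV rate of convergence from any starting point. The continuous-time statement then follows by writing $t=kt_0+s$ with $s\in[0,t_0)$, using $\pi\uppar N = P_s\pi\uppar N$, and applying TV contraction under $P_s$. The main obstacle is unambiguously the Lyapunov drift: the decrease in the maximum at each branching event is dictated by the gap between the top two order statistics, which can be arbitrarily small when particles cluster, so one must quantify how Brownian dispersion over an interval $[0,t_0]$ creates enough separation to realise an averaged contraction. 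The requirement "$N$ sufficiently large" in the statement is precisely what gives enough branching events per unit time (and enough room for the hydrodynamic limit of Theorem \ref{thm:NBBM}, together with the finiteness of $R_t$ from \cite{BBNP1}, to control the bulk behaviour) to close the drift estimate.
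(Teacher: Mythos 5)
The outline you propose (minorization via the no-branching event, a Lyapunov drift for the skeleton, Harris/Meyn–Tweedie, then patch back to continuous time) is the right shape of argument and matches the paper at the level of the minorization and the continuous-time patch-up. But the single substantive step you flag as ``the main obstacle'' — the Lyapunov drift — is left entirely unproved, and that is precisely where the real work in the paper lies, so the proposal as written has a genuine gap.

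Two more specific concerns. First, your Lyapunov function $V(\mathcal X)=\max_i\|x_i\|^2$ combined with a geometric drift $\E_{\mathcal X}[V(X\uppar N(t_0))]\le\gamma V(\mathcal X)+K_0$, $\gamma<1$, is quite likely too strong: for a tight cluster of particles at distance $R\gg1$ from the origin, the selection pulls the cloud inward at what is essentially a \emph{constant} (FKPP-type) speed, not a rate proportional to $R$, so the max norm should decrease additively over a fixed window $t_0$, not multiplicatively. One would need either the subgeometric form of the drift condition or a different Lyapunov functional. Second, and more importantly, the heuristic that Brownian dispersion ``creates enough separation'' between the top two order statistics is not quantified, and it is not at all clear how to close this estimate in the particle system directly. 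The paper side-steps both issues by working with a different Lyapunov-type quantity: $\theta_n=\min\{i\ge 0: X\uppar N(nt_1)\in\Gamma(K_0+i,c_0)\}$, i.e.\ the (rescaled) radius within which a fixed fraction $c_0$ of particles is concentrated. The downward drift of $\theta_n$ is imported from the PDE (Proposition~\ref{prop:movemass}, i.e.\ Proposition~5.10 of~\cite{BBNP1}, which says the obstacle problem solution doubles its mass one unit closer to the origin over a fixed time), and transferred to the particle system through the uniform hydrodynamic bound of Proposition~\ref{prop:NBBM1d}. This is then encoded as a stochastic domination of $\theta_n$ by a positive recurrent chain $(Y_n)$ on $\N_0$ (Proposition~\ref{prop:muNlargetime}), from which finite expected return times to $\Lambda=(\B(R_\infty+1))^{\otimes N}$ follow, and one concludes with the Athreya–Ney Harris-chain theorems. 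You gesture at using the hydrodynamic limit, but the crucial ingredient you are missing is the \emph{obstacle-problem} drift (Proposition~\ref{prop:movemass}) — without it, there is no route to a verified drift estimate, and the reason ``$N$ sufficiently large'' appears is not (primarily) to generate more branching events but to make Proposition~\ref{prop:NBBM1d} accurate enough to transfer the PDE drift to the particles.
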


For each $t \geq 0$, the empirical measure $\mu \uppar N(\cdot, t)$ is a random element of $\mathcal{P}(\R^d)$, the set of Borel probability measures on $\R^d$.  Theorem~\ref{thm:piNexists} implies that as $t \to \infty$  the law of $\mu \uppar N(\cdot, t)$ converges in total variation to the measure  $\pi\uppar N \circ H^{-1}$, where $H:(\R^d)^N \to \mathcal{P}(\R^d)$ is the map defined by $H(x_1,\dots,x_N) = \frac{1}{N} \sum_{i=1}^N \delta_{x_i}$ and $\pi\uppar N \circ H^{-1}$ is the pushforward of $\pi \uppar N$ under the map $H$.  The law of $\mu \uppar N(\cdot, t)$ and the measure $\pi\uppar N \circ H^{-1}$, which are both probability measures on the Polish space $\mathcal{P}(\R^d)$, do not depend on the particular ordering of particles used to define $X \uppar N(t)$ as an $(\R^d)^N$-valued process.

%\begin{thm} \label{thm:piNexists}
%For $N$ sufficiently large, the process $(\mu \uppar N( \cdot,  t),t\geq 0)$ has a unique invariant measure $\pi \uppar N$ on $\mathcal M\uppar N$.
%For any $\mathcal X\in (\R^d)^N$, under $\mathbb P_{\mathcal X}$,
%the law of $\mu \uppar N(\cdot, t)$ converges in total variation norm to $\pi\uppar N$ as $t\to \infty$.
%In particular for any measurable $A_1,\ldots, A_k \subseteq \R^d$ and any sequence $a_i\ge 0, i=1,\ldots ,k$ 
%\[
%\psub{\mathcal X}{ \mu \uppar N (A_i,t)=a_i, i=1,\ldots ,k)} \to \pi\uppar N ( \Lambda(A_i)=a_i, i=1,\ldots ,k )
%\]
%where $\Lambda$ denotes a random variable with law $\pi\uppar N $.
%\end{thm}

We also obtain more explicit results about the long-term behaviour of the particle system.
We let $U:\R^d \to \R$ denote the principal Dirichlet eigenfunction of ($-\Delta$) in a spherical domain with radius uniquely chosen so that the eigenvalue is $1$. That is, let $(U,R_\infty)$ denote the unique solution to
\begin{equation}
\begin{cases}
- \Delta U(x)  = U(x) ,  & \|x\| <  R_\infty,   \\
U(x) > 0, \quad &  \| x\| < R_\infty, \\
U(x)=0, \quad &  \|x\|\ge R_\infty, \\
\int_{\|x \| \leq R_\infty} U(x) \,\diffd x = 1.
\end{cases} \label{eqU_probpaper}
\end{equation}
Then $(U,R_\infty)$ is a stationary solution to~\eqref{pbu_probpaper}.
In~\cite{BBNP1}, we prove that any solution $(u(\cdot,t),R_t )$ of the free boundary problem~\eqref{pbu_probpaper} converges to the stationary solution $(U,R_\infty)$ as $t\to \infty$, and it turns out that this stationary solution also controls the long-term behaviour of the particle system for large $N$.

We shall use the following notation to denote a reasonable class of initial particle configurations.
For $K > 0$ and $c \geq 0$, let
\begin{equation}
\Gamma(K, c) = \bigg\{  \mathcal X\in (\R^d)^N\;:\;\frac1N \Big|\big\{ i: \| \mathcal X_i  \| < K\big \}\Big| \geq c\bigg\}. 
\label{Gammaxcdef}
 \end{equation}
This is the set of particle configurations which put at least a fraction $c$ of the particles within distance $K$ of the origin.
The following result shows that if $N$ is large, then at a large time~$t$, the particles are approximately distributed according to $U$, and the largest particle distance from the origin is approximately $R_\infty$.
\begin{thm} \label{thm:sspd}
Take $K>0$ and $c\in(0,1]$.
For $\epsilon>0$, there exist $N_\epsilon<\infty$ and $T_\epsilon<\infty$ such 
that for $N\geq N_\epsilon$ and $t\geq T_\epsilon$, for 
an initial condition $\mathcal X\in \Gamma(K,c)$
and $A\subseteq \R^d$ measurable,
\begin{align*}
\P_{\mathcal X}\bigg(\Big|\mu\uppar N (A,t)-\int_A U(x)\,\diffd x\Big|\geq \epsilon
\bigg)
&<\epsilon \\
\text{and }\qquad
\P_{\mathcal X}\bigg(\Big|M\uppar N_t-R_\infty\Big| \ge \epsilon \bigg)
&< \epsilon.
\end{align*}
\end{thm}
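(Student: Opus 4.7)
The overall strategy is to combine the hydrodynamic limit (Theorem~\ref{thm:NBBM}) at a large fixed time $T$ with the long-time convergence $(u(\cdot,t),R_t)\to(U,R_\infty)$ of the free boundary problem proved in the companion paper~\cite{BBNP1}. Morally, if one could apply Theorem~\ref{thm:NBBM} directly with initial data $\mathcal X$, then $\mu\uppar N(\cdot,T)\approx u(\cdot,T)\approx U$ for $T$ and $N$ large, and similarly $M\uppar N_T \approx R_T \approx R_\infty$, yielding the result. Two difficulties prevent a direct application: Theorem~\ref{thm:NBBM} is stated for i.i.d.\ initial conditions, whereas here $\mathcal X\in\Gamma(K,c)$ is deterministic and essentially arbitrary; and configurations in $\Gamma(K,c)$ may have up to $(1-c)N$ particles arbitrarily far from the origin, so the family of initial empirical measures $\tfrac{1}{N}\sum_i \delta_{\mathcal X_i}$ is not tight, and its weak limit points need not be probability measures.

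To address the first difficulty, I would extract from the proof of Theorem~\ref{thm:NBBM} a more general hydrodynamic statement: for any deterministic initial configurations $\mathcal X\uppar N$ whose empirical measures converge weakly to a probability measure $\nu_0$ supported in a fixed ball, the conclusion of Theorem~\ref{thm:NBBM} holds with $\mu_0$ replaced by $\nu_0$ (now only with convergence in probability). The i.i.d.\ hypothesis should enter the proof of Theorem~\ref{thm:NBBM} only through weak convergence and uniform boundedness of the initial empirical measures, so this extension should be essentially mechanical. To address the second difficulty, I would exploit the selection mechanism itself: since branching events occur at total rate $N$ and each removes the furthest particle, as long as the $\ge cN$ particles initially in $B(0,K)$ do not exit a larger ball $B(0,L)$, every branching event deletes an ``outer'' particle. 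A short-time Brownian estimate, together with a union bound over the $(1-c)N$ outer particles that must be pruned, should yield $\tau=\tau(\epsilon)>0$ and $L=L(\epsilon)<\infty$, uniform in $\mathcal X\in\Gamma(K,c)$ and in large~$N$, such that with probability at least $1-\epsilon/2$ all $N$ particles lie in $B(0,L)$ at time $\tau$.

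With these ingredients in hand, I would condition on $X\uppar N(\tau)$, pass to a subsequence along which the (now bounded) empirical measures $\mu\uppar N(\cdot,\tau)$ converge weakly to some limit $\nu_0$ supported in $\overline{B(0,L)}$, apply the generalized hydrodynamic limit on the interval $[\tau,\tau+T]$, and use that the free boundary flow starting from $\nu_0$ enters any prescribed neighbourhood of $(U,R_\infty)$ in time $T$ chosen large enough, \emph{uniformly} over initial measures supported in $\overline{B(0,L)}$. This uniformity should follow from the compactness of the set of such initial measures combined with continuous dependence of the solution to~\eqref{pbu_probpaper} on its initial data, as developed in~\cite{BBNP1}. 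The main obstacle I anticipate is the quantitative short-time regularization step: one has to show that inner particles do not migrate outward fast enough to ``refill'' the region $B(0,L)^c$ while the outer ones are being pruned, and to do so with estimates that are uniform in $\mathcal X\in\Gamma(K,c)$ and in $N$.
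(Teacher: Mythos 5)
Your high-level strategy — combine a hydrodynamic limit with the long-time convergence of the free boundary problem to $(U,R_\infty)$ — is indeed the right idea, but the paper's implementation is quite different from yours, and the route you sketch has a genuine gap.

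The crucial point you miss is that the paper never needs to extend the $d$-dimensional hydrodynamic limit (Theorem~\ref{thm:NBBM}) to arbitrary deterministic data, and never passes to subsequences. Instead, everything is driven by a \emph{quantitative, uniform-in-initial-condition} one-dimensional estimate: Proposition~\ref{prop:NBBM1d} says that for \emph{any} $\mathcal X\in(\R^d)^N$, the empirical radial CDF $F\uppar N(\cdot,t)$ is within $e^{2t}N^{-c_1}$ of the solution of the obstacle problem~\eqref{pbv_probpaper} started from $F\uppar N(\cdot,0)$, with a failure probability bounded by $e^t N^{-1-c_1}$ uniformly in $\mathcal X$. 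Combined with the propagation-of-mass estimates (Propositions~\ref{prop:movemass} and~\ref{prop:muNlargetime}, which hinge on comparing to a positive-recurrent Markov chain) and the long-time convergence of the obstacle problem (Proposition~\ref{prop:conv}), this yields Proposition~\ref{prop:ssp}: for $N$ and $t$ large, $F\uppar N(\cdot,t)$ is close to $V(\cdot)$ and $M\uppar N$ stays near $R_\infty$ over intervals of length one. The $d$-dimensional statement about $\mu\uppar N(A,t)$ is then obtained by a separate mechanism: having confined the particles in a ball of radius $R_\infty+\epsilon$ and matched the radial CDF with $V$, Lemma~\ref{lem:sspdproof} uses a spectral (Dirichlet eigenfunction) expansion on $\mathcal B(R_\infty+\epsilon)$ to show that after an additional time $\epsilon^{-1/2}$ the BBM density relaxes to $U$; Corollary~\ref{cor:sspdproof} transfers this to the $N$-BBM via the standard coupling. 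There is no ``generalized Theorem~\ref{thm:NBBM}'' in the paper, and no compactness argument.

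The gap in your approach is uniformity. Theorem~\ref{thm:sspd} requires a single $N_\epsilon$ and $T_\epsilon$ that work for \emph{all} $\mathcal X\in\Gamma(K,c)$ and \emph{all} $t\ge T_\epsilon$ simultaneously. Passing to a weakly convergent subsequence of empirical measures $\mu\uppar N(\cdot,\tau)$ (which are moreover random objects, not deterministic) gives information about that subsequence only; to turn it into a uniform bound you would need either a contradiction scheme carefully threading the randomness and the joint $N\to\infty$, $t\to\infty$, $\mathcal X$-quantifier, or a quantitative estimate — and once you have the quantitative estimate, the compactness argument is superfluous. Two further points. First, your proposed pruning step — that all $N$ particles lie in $B(0,L)$ after a short time $\tau$, uniformly over $\Gamma(K,c)$ — is stronger than what the paper uses (the paper only propagates a \emph{fraction} of the mass into a fixed ball and lets the rest be controlled indirectly via the obstacle problem); your claim is plausible but the required $\tau$ is not ``short'' in general (one expects $\tau\approx\log(1/c)$ rather than $\tau(\epsilon)$ only), and the union-bound estimate over inner particles needs $L$ to grow accordingly, so the dependencies need to be spelled out. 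Second, your argument as written proves the bound only at $t=\tau+T$, not at all $t\ge T_\epsilon$; extending it needs a ``return to good configurations'' mechanism, which is precisely what Propositions~\ref{prop:muNcjump} and~\ref{prop:muNlargetime} supply in the paper and which your proposal omits.
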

As a consequence of Theorems~\ref{thm:piNexists} and~\ref{thm:sspd}, 
for large $N$,
under the invariant distribution $\pi\uppar N$, the proportion of particles in a set $A$ is approximately $\int_A U(x)\, \diffd x$ and the furthest particle distance from the origin is approximately $R_\infty$: 
\begin{thm} \label{cor:ssp}
For $\epsilon>0$ and $A\subseteq \R^d$ measurable,
\begin{align} \label{eq:sspcor3}
\pi \uppar N\left(
\left\{\mathcal X\in (\R^d)^N :\left|  \frac 1N \sum_{i=1}^N
\indic{\mathcal X_i\in A}-\int_A U(x)\,\diffd x\right|\ge\epsilon \right\}
\right)
\to 0
\quad&\text{as }N \to \infty
\\[1ex]
\label{eq:sspcor2}
\text{and }\quad
\pi \uppar N\Big(
\Big\{\mathcal X\in (\R^d)^N :\Big| \max_{i\in \{1,\ldots,N\}}\|\mathcal
X_i\|-R_\infty\Big|\ge\epsilon \Big\}
\Big)
\to 0
\quad&\text{as }N \to \infty.
\end{align}
\end{thm}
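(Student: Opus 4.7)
The plan is to identify $\pi\uppar N$ with the $t\to\infty$ limit of the law of $X\uppar N(t)$ started from any fixed configuration (Theorem~\ref{thm:piNexists}), and then to apply the uniform long-time estimate of Theorem~\ref{thm:sspd} to that configuration. Fix an $N$-independent starting configuration in $\Gamma(K,c)$, for instance $\mathcal X_0 = (0,\ldots,0)\in(\R^d)^N$, which lies in $\Gamma(1,1)$ for every $N$. For $\delta>0$ and a Borel set $A\subseteq\R^d$, let
$$
B^{1}_\delta = \Big\{ \mathcal X\in(\R^d)^N : \Big|\tfrac{1}{N}\sum_{i=1}^N \indic{\mathcal X_i\in A} - \int_A U(x)\,\diffd x \Big|\ge \delta \Big\},
$$
$$
B^{2}_\delta = \Big\{ \mathcal X\in(\R^d)^N : \Big|\max_{1\le i\le N}\|\mathcal X_i\| - R_\infty \Big|\ge \delta \Big\}.
$$
Both are Borel subsets of $(\R^d)^N$: $B^1_\delta$ because $A$ is Borel, and $B^2_\delta$ because $\mathcal X\mapsto \max_i\|\mathcal X_i\|$ is continuous.

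Now fix $\epsilon>0$ and any $\eta\in(0,\epsilon)$. Applying Theorem~\ref{thm:sspd} with parameters $K=c=1$ and tolerance $\eta$ yields finite constants $N_\eta$ and $T_\eta$ such that, for every $N\ge N_\eta$ and every $t\ge T_\eta$,
$$
\P_{\mathcal X_0}(X\uppar N(t)\in B^i_\eta) < \eta, \qquad i=1,2.
$$
Since $\eta<\epsilon$ implies $B^i_\epsilon\subseteq B^i_\eta$, combining this with Theorem~\ref{thm:piNexists} (total-variation convergence implies convergence of probabilities of any fixed Borel set) gives, for every $N\ge N_\eta$,
$$
\pi\uppar N(B^i_\epsilon) \;\le\; \pi\uppar N(B^i_\eta) \;=\; \lim_{t\to\infty} \P_{\mathcal X_0}(X\uppar N(t)\in B^i_\eta) \;\le\; \eta.
$$
Hence $\limsup_{N\to\infty}\pi\uppar N(B^i_\epsilon)\le\eta$ for every $\eta>0$, which forces $\pi\uppar N(B^i_\epsilon)\to 0$ as $N\to\infty$ for $i=1,2$, i.e.\ exactly \eqref{eq:sspcor3} and \eqref{eq:sspcor2}.

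There is no genuine obstacle in this argument: once Theorems~\ref{thm:piNexists} and~\ref{thm:sspd} are in hand, the statement is a direct consequence of invariance combined with total-variation convergence, and the only verification required is Borel measurability of $B^i_\delta$. All the analytic work has already been carried out in the proofs of the earlier two theorems; this corollary merely packages those inputs.
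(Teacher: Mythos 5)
Your proof is correct and follows essentially the same route as the paper: start the process from $\mathcal X_0 = (0,\ldots,0) \in \Gamma(1,1)$, invoke Theorem~\ref{thm:sspd} to make the probability of the bad event small uniformly in large $N,t$, and pass to the $t\to\infty$ limit using the total-variation convergence of Theorem~\ref{thm:piNexists}. The only cosmetic difference is that the paper bundles the two conditions into a single set $D_\epsilon$ with a union bound (tolerance $2\delta$) whereas you treat them separately.
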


The results in this article and in the companion article~\cite{BBNP1} can be summarised in the following informal diagram: 
\begin{quote}
\centering
\begin{tikzpicture}[box/.style={draw,rounded corners,align=left,outer
sep=2pt,fill=black!5},arrows={[scale=1.5]}]
\node[box] (a) at (0,0) {$N$-BBM, $\mu\uppar N(\diffd x,t)$};
\node[box] (b) at (6,0) {Hydrodynamic\\limit $(u(x,t),R_t)$};
\node[box] (c) at (0,-2.5) {Stationary\\distribution, $\pi\uppar N$};
\node[box] (d) at (6,-2.5) {Stationary\\solution $(U(x),R_\infty)$};
\draw[->] (a.east) -- node[above] {$N\to\infty$} (b.west);
\draw[->] (a) -- node[right] {$t\to\infty$} (c);
\draw[->] (c.east) -- node[above] {$N\to\infty$} (d.west);
\draw[->] (b) -- node[right] {$t\to\infty$} (d);
\end{tikzpicture}
\end{quote}
In~\cite{BBNP1}, we deal with the
right hand side of the diagram: well-posedness of the free boundary problem~\eqref{pbu_probpaper} and the long-term behaviour of its solutions. In the present article, Theorem~\ref{thm:NBBM} gives rigorous meaning to the top of the diagram, Theorem~\ref{thm:piNexists} covers the left hand side, and Theorem~\ref{cor:ssp} covers the bottom of the diagram.

\subsection{Related works}

The particle system we are considering is a particular case of a more general $N$-particle branching Brownian motion ($N$-BBM) with spatial selection, described as follows: The system consists of $N$ particles  moving in $\R^d$ with locations $\big(X\uppar N_1(t),\ldots , X\uppar N_N(t)\big)$.   Each particle moves independently according to a Brownian motion with diffusivity~$\sqrt 2$ and branches independently into two particles at rate~1. Whenever a particle branches, however, the particle having least ``fitness'' or ``score'' (out of the entire ensemble) is instantly removed (killed), so that there are exactly $N$ particles in the system at all times.  The fitness of a particle is a function $\mathcal{F}(x)$ of its location $x \in \R^d$, and as a result, the elimination of least-fit particles tends to push the ensemble toward regions of higher fitness.  Variants of this stochastic process were first studied in one spatial dimension, beginning with work of Brunet, Derrida, Mueller, and Munier \cite{BDMM06, BDMM07} on discrete-time processes, and the work of Maillard \cite{Maillard16} on the continuous-time model involving Brownian motions.  In these works, the particle removed from the system is always the leftmost particle, which means that they could be described by a monotone fitness function (e.g. $\mathcal{F}(r) = r$, $r \in \R$).
The general multidimensional model which we have just described above was first studied by N.~Berestycki and Zhao \cite{BZ18}; specifically, they studied the particle system with fitness functions $\mathcal{F}(x) = \|x\|$ and $\mathcal{F}(x) = \lambda \cdot x$, both of which have the effect of pushing the ensemble of particles away from the origin.  The Brownian bees model that we consider in this article corresponds to the fitness function $\mathcal{F}(x) = - \|x\|$.

In the setting of one spatial dimension and with monotone fitness function $\mathcal{F}(r) = r,\ r \in \R$, De Masi, Ferrari, Presutti, and Soprano-Loto \cite{DMFPSL} 
determined the hydrodynamic limit of the particle system.
For $t>0$, define the measure
\[
\mu\uppar N(\diffd r,t)=\frac 1 N \sum_{k=1}^N \delta_{X\uppar N_k(t)}(\diffd r).
\]
De Masi et al.~proved that if the initial particle locations $X\uppar N_1(0),\ldots, X\uppar N_N(0)$ are i.i.d., with certain assumptions on the distribution of $X\uppar N_1(0)$,
then the family of empirical measures $\mu\uppar N(\diffd r,t)$ converges, as $N \to \infty$, to a limit which can be identified with a solution $u(r,t)$ to a free boundary problem:
\begin{equation}
\begin{cases}
\partial_t u = \partial_{r}^2 u + u, \quad & r > \gamma_t, \;\; t > 0,  \\
 u(r,t) =0, \quad & r \leq \gamma_t,\;\; t > 0, \\[1ex]
\displaystyle \int_{\gamma_t}^\infty u(r,t) \,\diffd r = 1, \quad & t > 0,
\end{cases} \label{d1fbp}
\end{equation}
where the free boundary at $r = \gamma_t \in \R$ is related to $u$ through the integral constraint.  Global existence of solutions to this free boundary problem was proved by J.~Berestycki, Brunet, and Penington~\cite{BBP}.  
De Masi et al.~also state that for fixed $N$, the particle system (seen from the leftmost particle) converges in distribution as $t\to \infty$ to an invariant measure $\nu_N$, but they did not prove asymptotic results about the shape of the cloud of particles under $\nu_N$ as $N\to \infty$.
As discussed in Section~\ref{subsec:1d} below, a related one-dimensional result plays a fundamental role in our work.  We use some coupling ideas similar to those in the proof of the hydrodynamic limit result in~\cite{DMFPSL}, but we obtain a more quantitative result for our particle system (see Proposition~\ref{prop:NBBM1d} below) which does not require the initial particle locations to be i.i.d.~random variables. This, together with results about the long-term behaviour of the free boundary problem~\eqref{pbu_probpaper} from~\cite{BBNP1}, allows us to control the long-term behaviour of the Brownian bees particle system for large $N$.

Building on the approach of \cite{DMFPSL}, Beckman~\cite{Beck19} derived a similar hydrodynamic limit in the one-dimensional setting with symmetric fitness $\mathcal{F}(r) = -|r|$, which coincides with our case if $d=1$.  Beckman also studied the long-term behaviour of the $N$-BBM in one dimension with a non-monotone fitness function of the form $\mathcal{F}(r) = r + \psi(r)$, $\psi$ being periodic, and proved existence of a stationary distribution in a certain moving reference frame. In earlier work, Durrett and Remenik~\cite{DR11} studied a related branching-selection model in which non-diffusing particles in $\R$ are born at random locations but do not move during their lifetimes. 
They showed that the hydrodynamic limit of this particle system is given by a non-local free boundary problem.

A related model is the Fleming-Viot system studied by Burdzy, Ho\l yst, and March \cite{BHM00}.  In that model, particles diffuse within a bounded domain having {\em fixed} boundary; whenever a particle hits the boundary it is instantly killed and one of the internal particles simultaneously branches, preserving the total mass.  As in our case, the stationary distribution for that system also converges to the principal eigenfunction of the Laplacian (as the number of particles goes to infinity). This eigenfunction is a quasi-stationary distribution for the diffusion conditioned on non-extinction.  See also Collet et al.~\cite{CMSM13}, and references therein, for other related works on quasi-stationary distributions.

In \cite{AFGJ16},  Asselah, Ferrari, Groisman, and Jonckheere considered a slightly different Fleming-Viot particle system. In their work, the $N$  particles live on $\{0,1,2,\ldots\}$,  move  independently as continuous-time sub-critical Galton-Watson processes, and are killed when they hit 0 (each time a particle is killed, one of the remaining $N-1$ particles, chosen at random, branches). Recall that for a single sub-critical Galton-Watson process conditioned on non-extinction, there exists an infinite family of quasi-stationary distributions. (By contrast, observe that a diffusion on a bounded domain conditioned on not exiting the domain has a unique quasi-stationary distribution.) Asselah et al.~showed that for each $N$, the Fleming-Viot particle system has a unique invariant distribution, and that its stationary empirical distribution converges as $N\to \infty$ to the \emph{minimal} quasi-stationary distribution of the Galton-Watson process conditioned on non-extinction (which is the quasi-stationary distribution with the minimal expected time of extinction). This has been called the \emph{selection principle} in the literature. It is reminiscent of the fact that the solution of the Fisher-KPP equation started from a fast decreasing initial condition converges to the minimal-velocity travelling wave (see in particular \cite{GJ18} and the note \cite{GJ13} of Groisman and Jonkheere). This principle is conjectured to hold in quite broad generality. For instance, for the one-dimensional $N$-BBM studied in \cite{DMFPSL}, it is conjectured that the unique invariant distribution of the system seen from the leftmost particle converges, as $N\to \infty$, to the centred minimal-velocity travelling wave solution of~\eqref{d1fbp} (which is given by $\gamma_t =2t,$ $u(2t +r,t) =re^{-r}\indic{r\ge 0}$). 
%arXiv preprint arXiv:1304.4847, 2013.

Finally, we mention the very recent work~\cite{ABLT20} of Addario-Berry, Lin, and Tendron, in which a variant of the Brownian bees model with the following selection rule is considered: each time one of the $N$ particles branches, the particle currently furthest away from the centre of mass of the cloud of particles is removed from the system. Addario-Berry et al.~show that the movement of the centre of mass, appropriately rescaled, converges to a Brownian motion.

\subsection{One-dimensional results and outline of the article} \label{subsec:1d}
The first step in the proofs of Theorems~\ref{thm:NBBM},~\ref{thm:piNexists},~\ref{thm:sspd} and~\ref{cor:ssp}
is to control the proportion of particles within distance $r$ of the origin at a fixed time $t$, when the number of particles $N$ is very large.

For $r>0$, let $\B(r)=\{x \in \R^d : \|x\|<r\}$ be the open ball of radius $r$ 
centred at the origin.
Suppose that $(u,R)$ solves~\eqref{pbu_probpaper} with some initial probability measure $\mu_0$,
and let $v:[0,\infty)\times (0,\infty)\to [0,1]$ denote the mass of $u$ within distance
$r$ of the origin at time $t$:
\begin{equation*}
v(r,t) = \int_{\B(r)} u(x,t) \, \diffd x. 
\end{equation*}
Then $r\mapsto v(r,t)$ is non-decreasing and $v(r,t)=1$ for $r\ge
R_t$. Let $v_0(r)=\mu_0\big(\B(r)\big)$; then,
by Lemma~6.2 of~\cite{BBNP1},
 $v$ satisfies the following parabolic obstacle problem:
\begin{equation}
\begin{cases}
\displaystyle	0 \leq v(r,t) \leq 1,& \text{for $t>0$, $r \geq 0$,}\\[1ex]
\displaystyle \partial_t v = \partial_r^2v -\frac{d-1}r\partial_r v+v, &\text{if $v(r,t) <1$,}   \\[1ex]
v(0,t)=0, &\text{for $t>0$},\\
v(r,t) \text{ is continuous on $[0,\infty) \times (0,\infty)$}, \\
\partial_r v(\cdot ,t) \text{ is continuous on $[0,\infty)$}, & \text{for $t>0$,}\\
v(\cdot,t)\to v_0 &\text{in $L^1_\text{loc}$ as $ t\searrow0$}.
\end{cases}
\label{pbv_probpaper}
\end{equation}
We prove in Theorem~2.1 of~\cite{BBNP1} that for any measurable $v_0:[0,\infty) \to [0,1]$,~\eqref{pbv_probpaper} has a unique solution.

The following result is a hydrodynamic limit result for the distances of particles from the origin, and will be an important step in the proofs of Theorems~\ref{thm:NBBM} and~\ref{thm:sspd}.
Introduce
\begin{equation}\label{def F}
F\uppar N(r,t):=\mu\uppar N\big(\B(r),t\big)
\end{equation}
as the proportion of particles within distance $r$ of the origin at time~$t$. Then, Proposition~\ref{prop:NBBM1d} below says that for any initial configuration of particles, at a fixed time $t$, the proportion $F\uppar N(r,t)$ is close to the solution $v\uppar N(r,t)$ of~\eqref{pbv_probpaper} with initial condition $v_0$ determined by the initial configuration of particles. The bound does not depend on the initial particle configuration; this will be crucial when the result is used in the proof of Theorem~\ref{thm:sspd}.
\begin{prop} \label{prop:NBBM1d}
There exists $c_1\in (0,1)$ such that for $N$ sufficiently large, for $t>0$ and any $\mathcal X \in (\R^d)^N$,
$$
\psub{\mathcal X}{ \sup_{r\geq 0}\left|F\uppar N (r,t)-v\uppar N(r,t) \right|
\geq e^{2t} N^{-c_1} }
\leq e^t N^{-1-c_1},
$$
where $v\uppar N$ is the solution of~\eqref{pbv_probpaper} with $v_0(r)=F\uppar 
N(r,0)$ $\forall r \geq 0$.
\end{prop}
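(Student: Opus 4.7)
The plan is to adapt the coupling strategy of De~Masi--Ferrari--Presutti--Soprano-Loto~\cite{DMFPSL} to the radial Brownian bees setting and make it quantitative uniformly in the (deterministic) initial data. Since $F\uppar{N}(r,t)$ depends only on the distances $\|X_i\uppar{N}(t)\|$, the $N$-BBM descends to an $N$-particle branching-Bessel system on $[0,\infty)$ with selection of the rightmost coordinate, and~\eqref{pbv_probpaper} is the expected parabolic obstacle problem at the hydrodynamic scale: the operator $\partial_r^2 - \tfrac{d-1}{r}\partial_r + (\cdot)$ is precisely what $v(r,t) = \int_{\B(r)} u(x,t)\,\diffd x$ inherits from $\partial_t u = \Delta u + u$, and the obstacle $v \le 1$ encodes the fixed population constraint.

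The core step is to discretise $[0,t]$ into $n \sim t/\delta$ intervals of length $\delta = \delta(N)$, chosen as a small negative power of $N$. On each such interval I couple the true $N$-BBM to an auxiliary \emph{branch-then-select} process: all particles evolve as independent branching Brownian motions for time $\delta$, producing a random number $N' \ge N$ of particles, after which the $N'-N$ furthest particles are removed in one batch. A counting argument using that the number of branchings per step is Poisson with mean $N\delta$ shows that the sup-in-$r$ discrepancy between the two empirical CDFs at the end of the step is of order $N\delta^2$ with high probability. The expected radial CDF of the free evolution (divided by $N$) solves the unobstructed linear equation $\partial_t \tilde v = \partial_r^2 \tilde v - \tfrac{d-1}{r}\partial_r \tilde v + \tilde v$ with initial data given by the step-start empirical CDF; a second-moment computation gives pointwise concentration at scale $N^{-1/2}$, which is upgraded to sup-in-$r$ control via a union bound over a grid of $O(N)$ points, exploiting the monotonicity of both CDFs. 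The batch-removal step then truncates the free CDF at height $1$, matching the obstacle action in~\eqref{pbv_probpaper}.

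Iterating these per-step bounds and invoking the $L^\infty$ stability of~\eqref{pbv_probpaper} under perturbations of initial data (from the companion paper~\cite{BBNP1}), the per-step error is amplified by at most a factor $e^{2\delta}$ by the $+v$ source, so a Gronwall iteration yields a total $L^\infty$ error of order $e^{2t}\,(t/\delta)\,(N^{-1/2} + N\delta^2)$. Choosing $\delta \sim N^{-3/4}$ balances the two contributions and gives the claimed bound $e^{2t} N^{-c_1}$ for some $c_1 \in (0,1)$; the failure probability bound $e^t N^{-1-c_1}$ follows from a union bound over the $n$ steps together with Chebyshev-type tail estimates on branching counts and free BBM populations. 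I expect the main technical difficulty to lie in making the one-step branch-then-select coupling discrepancy rigorous uniformly in $r$, and in obtaining sup-in-$r$ concentration for the free BBM CDF with the right power of $N$; both issues are made manageable by the $[0,1]$-valuedness and monotonicity in $r$ of the CDFs at play.
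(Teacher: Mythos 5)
The proposal correctly identifies the overall architecture (discretize time, compare the $N$-BBM to a free BBM over short intervals, feed the truncation/selection into the obstacle mechanism), which is indeed the paper's strategy, but two of the quantitative ingredients are wrong and the final arithmetic does not close.

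First, the concentration step. You propose pointwise concentration of the free-BBM empirical CDF at scale $N^{-1/2}$ by a second-moment computation, then a union bound over a grid of $O(N)$ points. By Chebyshev, the probability that $|F^+(r,t)-v^\ell(r,t)|\ge N^{-1/2}$ is $O(e^{2t})$, i.e.\ of order $1$; even with a fixed threshold $\epsilon$ it is only $O(e^{2t}N^{-1})$, so after a union bound over $O(N)$ points you are left with $O(N\cdot N^{-1})=O(1)$, nowhere near the claimed failure probability $e^t N^{-1-c_1}$. The paper's Lemma~\ref{lem:Ftilde+} uses a \emph{fourth-moment} estimate, giving a pointwise failure probability of order $e^{4t}N^{-6/5}$ at the (weaker) threshold $N^{-1/5}$, and then Lemma~\ref{lem:F+} performs the union bound over a much \emph{sparser} grid of about $N^{1/10}$ points chosen so that $v^\ell$ varies by $O(N^{-1/10})$ between consecutive grid points; monotonicity then upgrades the bound to $\sup_r$. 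Both the higher moments and the sparse, $v^\ell$-adapted grid are essential; the dense grid plus second moments does not produce a summable failure probability.

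Second, the final arithmetic. Even granting the per-step error $N^{-1/2}+N\delta^2$ and a Gronwall accumulation of the form $(t/\delta)(N^{-1/2}+N\delta^2)$, the optimal choice $\delta\sim N^{-3/4}$ gives a total of order $tN^{3/4}\cdot N^{-1/2}=tN^{1/4}$, which \emph{diverges} in $N$. So the chosen scaling does not yield a hydrodynamic estimate at all. The paper instead works with a per-step error that is independent of $\delta$ (the uniform-in-$r$ bound $\sim N^{-1/10}$ from Lemma~\ref{lem:F+}) and takes the number of steps to be only $K\sim N^{1/20}t$, giving a total error $\sim Ke^{K\delta}N^{-1/10}\sim te^t N^{-1/20}$. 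Your ``$N\delta^2$'' term from the branch-then-select coupling is the wrong scaling to be useful: for the iteration to close you would need a per-step deterministic error going to zero faster than $\delta$ as $\delta\to 0$, and $N\delta^2$ does not have this property uniformly in $N$.

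Finally, a structural remark. Rather than a single ``branch-then-select'' auxiliary process with a batch removal at the end of each step (whose sup-in-$r$ discrepancy from the true $N$-BBM is genuinely delicate to control), the paper sidesteps this by two \emph{one-sided} couplings: Proposition~\ref{prop:FNdom} embeds the $N$-BBM inside a free BBM to get an upper bound on $F\uppar N$ via $C_1 F^+$, and Proposition~\ref{prop:lowercouple} embeds a free BBM started from roughly $e^{-\delta}N$ innermost particles inside the $N$-BBM to get a lower bound. The truncation at $e^{-\delta}$ (resp.\ $1$) is then exactly the operator $C_{e^{-\delta}}$ (resp.\ $C_1$) appearing in the discretized approximants $v^{k,\delta,\pm}$ from the companion paper (Lemma~\ref{lem:vnapprox}), so no separate analysis of the batch-removal mismatch is needed. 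If you wish to salvage your route, you would need to make the branch-then-select discrepancy estimate rigorous uniformly in $r$ and also replace the second-moment plus dense-grid concentration scheme by a higher-moment bound combined with a coarser grid.
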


The next result uses Proposition~\ref{prop:NBBM1d} to get an upper bound on the largest particle distance from the origin which holds over a time interval of fixed length. 
This will then allow us to compare the particle system to a system in which particles are killed if they are further than a deterministic distance from the origin, which will enable us to prove the $d$-dimensional hydrodynamic limit in Theorem~\ref{thm:NBBM}.
\begin{prop}\label{prop:bound on M}
There exists $c_2 \in (0,1)$ such that under the assumptions of Theorem~\ref{thm:NBBM}, for any $0<\eta<T$, 
 for $N$ sufficiently large (depending on $\eta$ and $T$),
\begin{equation*}
\P\left(\exists t\in [\eta,T]:M\uppar N_t>R_t+\eta \right)\leq N^{-1-c_2}.
\end{equation*}
\end{prop}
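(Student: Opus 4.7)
The plan is to apply Proposition~\ref{prop:NBBM1d} on a finely spaced time grid in $[\eta,T]$ and combine it with stability of the obstacle problem~\eqref{pbv_probpaper}, continuity of $t\mapsto R_t$, and Brownian tail estimates on the short grid intervals; the selection mechanism of the $N$-BBM is then used to upgrade the resulting density bound into a pointwise bound on $M\uppar N_t$.

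First, because the initial particles are i.i.d.\ with law $\mu_0$, the Dvoretzky--Kiefer--Wolfowitz inequality applied to the radial CDFs yields, except on an event of probability $\le 2e^{-2\sqrt N}$, that $\sup_{r\ge 0}|F\uppar N(r,0)-v_0(r)|\le N^{-1/4}$, where $v_0(r)=\mu_0(\B(r))$. A stability estimate for \eqref{pbv_probpaper} from the companion paper~\cite{BBNP1} then gives, on this event and for $N$ large, $\sup_{t\in[\eta/2,T]}|R\uppar N_t-R_t|\le \eta/8$, where $R\uppar N_\cdot$ is the free boundary of $v\uppar N$. Using continuity of $t\mapsto R_t$ (also from \cite{BBNP1}), I set up a grid $s_j=\eta/2+j\delta$ in $[\eta/2,T]$ with spacing $\delta=\eta^2/(C\log N)$, $C$ a large constant to be chosen, so that $|R_{s_j}-R_t|\le \eta/8$ for all $t\in[s_j,s_{j+1}]$.

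Next, I apply Proposition~\ref{prop:NBBM1d} at each grid time and take a union bound over the $O(\log N)$ values of $j$: except on an event of probability $O((\log N)e^T N^{-1-c_1})$, $\sup_{r\ge 0}|F\uppar N(r,s_j)-v\uppar N(r,s_j)|\le e^{2T}N^{-c_1}$ at every $s_j$. Since $v\uppar N(r,s_j)=1$ for $r\ge R\uppar N_{s_j}$, this gives at most $e^{2T}N^{1-c_1}$ particles beyond $R\uppar N_{s_j}$ at each grid time. In parallel, by the reflection principle, $\P\bigl(\sup_{t\in[s_j,s_{j+1}]}\|X\uppar N_i(t)-X\uppar N_i(s_j)\|>\eta/16\bigr)\le 2d\,N^{-C/512}$, and a union bound over the $N\cdot O(\log N)$ such events is negligible for $C$ large.

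The main obstacle is to upgrade the density bound --- at most $e^{2T}N^{1-c_1}$ particles beyond $R\uppar N_{s_j}\le R_{s_j}+\eta/8$ --- into a bound on $M\uppar N_{s_j}$ itself: Proposition~\ref{prop:NBBM1d} alone allows these ``outliers'' to lie arbitrarily far out. The key extra input is the selection rule: at each branching (rate $N$) the current farthest particle is killed, so combined with the BM bound preventing non-outliers from crossing $\eta/16$ in time $\delta$, the outlier population is eliminated in expected time $O(N^{-c_1})\ll\delta$; during that short window the outlier max itself moves by at most $\eta/16$, after which the max is carried by a non-outlier at distance $\le R\uppar N_{s_j}+\eta/16$. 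Propagating forward on $[s_j,s_{j+1}]$ by the BM bound and using $|R_{s_j}-R_t|\le\eta/8$ yields $M\uppar N_t\le R_t+\eta$ for $t\in[\eta,T]$. The initial grid interval $[\eta/2,\eta]$ needs extra care, since $M\uppar N_{\eta/2}$ can be large if $\mu_0$ has heavy tails; one uses that by time $\eta/2$ the initial ordered maxima of $\mu_0$ have with overwhelming probability already been killed several times by the $\sim N\eta/2$ branchings, so that $M\uppar N_{\eta/2}$ is effectively controlled by the bulk of $\mu_0$. Summing the failure probabilities of all the above events gives a total bound of order $N^{-1-c_2}$ for some $c_2\in(0,1)$.
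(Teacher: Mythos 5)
Your proposal correctly identifies the two main ingredients — concentration of $F\uppar N(\cdot,t)$ around the obstacle-problem solution, and Brownian-displacement bounds on a fine time grid — and you correctly recognise that the central difficulty is to upgrade a \emph{density} statement ($\le e^{2T}N^{1-c_1}$ particles beyond $R_t$) into a bound on the \emph{maximum} $M\uppar N_t$. However, the way you resolve this difficulty has a genuine gap that the paper's argument is specifically designed to avoid.

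The gap is a chicken-and-egg problem in your outlier-elimination step. You argue that ``the outlier population is eliminated in expected time $O(N^{-c_1})\ll\delta$; during that short window the outlier max itself moves by at most $\eta/16$, after which the max is carried by a non-outlier at distance $\le R\uppar N_{s_j}+\eta/16$.'' But Proposition~\ref{prop:NBBM1d} gives only the number of outliers at time $s_j$, not their \emph{distances}. At time $s_j$ those $O(N^{1-c_1})$ particles could be at distance $R_{s_j}+100$, and then $M\uppar N_t > R_t+\eta$ for all $t$ in the elimination window, which is contained in $[\eta,T]$ — so the event you are bounding has already occurred. To rule this out you would need to know $M\uppar N_{s_j}$ is bounded before you start, which is the statement you are trying to prove; and indeed you acknowledge exactly this issue on $[\eta/2,\eta]$, but your proposed fix (``the initial ordered maxima of $\mu_0$ have with overwhelming probability already been killed several times\dots so $M\uppar N_{\eta/2}$ is effectively controlled by the bulk of $\mu_0$'') is not a proof and would itself require substantial work. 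A further, subsidiary issue is that ``each branching kills one outlier'' is not quite true: the farthest particle is killed, but since outliers and non-outliers both diffuse across the threshold, there is no clean guarantee that the farthest particle is always one of the $s_j$-outliers, nor a clean Poisson-concentration bound showing they all disappear in $O(N^{-c_1})$ time.

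The paper sidesteps the chicken-and-egg problem entirely via a counting/pigeonhole argument (Lemma~\ref{ZbigBsmall}): if the \emph{free} BBM already has more than $N$ descendants of the initial particles in $\B(r)$ by time $\epsilon$, and if all BBM displacements over $[0,2\epsilon]$ are $\le\tfrac13\epsilon^{1/3}$, then at some $s^\ast\in[0,\epsilon]$ the $N$-BBM maximum must have dipped below $r+\tfrac13\epsilon^{1/3}$ — otherwise all $>N$ such descendants would still be ``blue,'' which is impossible. This gives control of $M\uppar N_s$ on the \emph{shifted} interval $[\epsilon,2\epsilon]$, with no assumption whatsoever about where the outliers are at time~$0$. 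The proof then covers $[\eta,T]$ by $\bigcup_k[\epsilon(k+1),\epsilon(k+2)]$, using the density bound (Lemma~\ref{cor:iidhydro}) at the grid times $\epsilon k$ and Lemma~\ref{lemAsup} on each shifted window. This one-step lag — density at time $\epsilon k$ controls the max on $[\epsilon(k+1),\epsilon(k+2)]$ — is the structural feature your proposal is missing, and it is precisely what makes the argument close without ever needing a prior bound on $M\uppar N$ at grid times.
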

In the case where $\mu_0$ has compact support, the proof of Proposition~\ref{prop:bound on M} can easily be extended to bound the probability that there exists $ t\in (0,T]$ with $M\uppar N_t>R_t+\eta $. However, an upper bound on $M\uppar N_t$ in the time interval $[\eta,T]$ (for an arbitrarily small $\eta$) is enough to allow us to prove Theorem~\ref{thm:NBBM}.

Using Proposition~\ref{prop:NBBM1d} and results about the long-term behaviour of solutions to the obstacle problem~\eqref{pbv_probpaper} from the companion paper~\cite{BBNP1}, we can also prove the following result about the long-term behaviour of particle distances from the origin when $N$ is large.
For $r\ge 0$, let
\begin{equation} \label{eq:Vdef}
V(r) = \int_{\B(r)} U(x) \,\diffd x,
\end{equation}
where $U$ is defined in~\eqref{eqU_probpaper}.
\begin{prop} \label{prop:ssp}
Take $K>0$ and $c\in (0,1]$.
For $\epsilon>0$, there exist $N_\epsilon<\infty$ and $T_\epsilon<\infty$ such 
that for $N\geq N_\epsilon$ and $t\geq T_\epsilon$, for 
an initial condition $\mathcal X\in \Gamma(K,c)$,
\begin{align} 
\P_{\mathcal X}\bigg(\sup_{r\geq 0}\Big|F\uppar N (r,t)-V(r)\Big|\geq \epsilon 
\bigg) &< \epsilon,  \label{eq:sspA}\\
\P_{\mathcal X}\bigg( \Big|M\uppar N_t-R_\infty\,\Big| \ge \epsilon 
\bigg)
&< \epsilon ,
\label{eq:sspB}\\
\text{and }\qquad 
\P_{\mathcal X}\bigg( \sup_{s\in [0,1]}M\uppar N _{t+s}>R_\infty +\epsilon 
\bigg)&<\epsilon. \label{eq:sspC}
\end{align}
\end{prop}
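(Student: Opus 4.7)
The plan is to derive (A)--(C) from Proposition~\ref{prop:NBBM1d} combined with the long-time convergence $v(\cdot,s)\to V$ of solutions of the obstacle problem~\eqref{pbv_probpaper}, proved in~\cite{BBNP1}. A first step is to extract from~\cite{BBNP1} a quantitative uniform version of that convergence: for every $K>0$, $c\in(0,1]$ and $\delta>0$ there exists $T=T(K,c,\delta)<\infty$ such that every solution $v$ of~\eqref{pbv_probpaper} with initial datum $v_0$ satisfying $v_0(K)\ge c$ obeys $\sup_{s\ge T}\sup_r|v(r,s)-V(r)|\le\delta$. This uniform control over initial data is precisely what couples the deterministic convergence to Proposition~\ref{prop:NBBM1d}.

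For~\eqref{eq:sspA}, let $v\uppar N$ denote the solution of~\eqref{pbv_probpaper} with $v_0=F\uppar N(\cdot,0)$. Since $\mathcal X\in\Gamma(K,c)$ gives $v\uppar N(K,0)\ge c$, the uniform convergence provides $T_0:=T(K,c,\epsilon/4)$ with $\sup_r|v\uppar N(r,s)-V(r)|\le\epsilon/4$ for every $s\ge T_0$. Proposition~\ref{prop:NBBM1d} applied at $t=T_0$, with $N$ large enough that $e^{2T_0}N^{-c_1}\le\epsilon/4$, then yields~\eqref{eq:sspA} at $t=T_0$ with failure probability $\le e^{T_0}N^{-1-c_1}$. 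On the good event the configuration $X\uppar N(T_0)$ lies in a \emph{universal} set $\Gamma(K',c')$ whose parameters depend only on $V$; the strong Markov property and iterated applications of Proposition~\ref{prop:NBBM1d} on windows of fixed length $T'=T(K',c',\epsilon/4)$ then propagate~\eqref{eq:sspA} to all $t\ge T_\epsilon:=T_0$, the key point being that every restart lands in the same universal good set so restart parameters are $t$-independent.

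For~\eqref{eq:sspB}, the lower bound $M\uppar N_t\ge R_\infty-\epsilon$ is immediate from~\eqref{eq:sspA}: $V(R_\infty-\epsilon)<1$ forces $F\uppar N(R_\infty-\epsilon,t)<1$, so some particle lies outside $\B(R_\infty-\epsilon)$. The upper bound $M\uppar N_t\le R_\infty+\epsilon$ follows by adapting Proposition~\ref{prop:bound on M}: that proposition controls $M\uppar N_s$ by $R_s+\eta$ on bounded intervals, and $R_s\to R_\infty$ from~\cite{BBNP1}; its i.i.d.\ initial-data hypothesis is replaced by restarting from a time $t-T''$ at which~\eqref{eq:sspA} is already established, so that the obstacle problem started from $F\uppar N(\cdot,t-T'')$ has its free boundary close to $R_\infty$ throughout $[t-T'',t]$. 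For~\eqref{eq:sspC}, partition $[t,t+1]$ into $n$ equal subintervals, apply~\eqref{eq:sspB} at the $n+1$ endpoints via a union bound, and control in-between fluctuations by the reflection principle for the $N$ driving Brownian motions; crucially, branching events cannot increase $M\uppar N$, since the newborn particle sits at the parent's position while the furthest particle is instantly removed, so only diffusion contributes between endpoints.

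The principal obstacle is the uniform-in-$t$ iteration in~\eqref{eq:sspA}: a single application of Proposition~\ref{prop:NBBM1d} on $[0,t]$ has error $e^{2t}N^{-c_1}$, useless for large $t$. The escape is that once one successful step places the process in the universal good set $\Gamma(K',c')$, all subsequent restarts can be run in identical windows of fixed length $T'$; the delicate point, which is where I expect most of the work to lie, is ensuring that the per-restart failure probabilities combine to give a bound that is genuinely uniform over all $t\ge T_\epsilon$ rather than only over bounded $t$-intervals.
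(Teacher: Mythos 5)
Your outline correctly isolates the central difficulty — error accumulation in the iteration — but it leaves exactly that difficulty unresolved, and this is where the paper does real work that your sketch does not supply. You propose to restart the hydrodynamic estimate on fixed-length windows $T'$ from the ``universal good set'' $\Gamma(K',c')$. But a per-window failure probability $p$ accumulates via union bound to $\approx p\cdot(t-T_0)/T'$ over $[T_0,t]$, which grows linearly in $t$ and is useless for a bound uniform over all $t\ge T_\epsilon$. Saying the restart parameters are $t$-independent does not fix this: the number of windows is not. What the paper actually does (Propositions~\ref{prop:muNcjump} and~\ref{prop:muNlargetime}) is to dominate the ``distance from $\Gamma(K_0,c_0)$'' process $\theta_n$ by a \emph{positive recurrent} Markov chain $(Y_n)$ on $\N_0$ with a strong inward drift: from any level $i\ge1$ the comparison chain steps down by one with probability $1-2\delta$ and steps up by $j$ with probability only $\delta 2^{-j}$. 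Positive recurrence and convergence to equilibrium of $Y_n$ then give $\P(X\uppar N(nt_1)\in\Gamma(K_0,c_0))\ge 1-\epsilon$ for all large $n$ — a genuinely uniform-in-$n$ statement that no naive union bound yields. The inward drift comes from Proposition~\ref{prop:movemass}, and the sub-geometric jump tail from Proposition~\ref{prop:muNcjump}; neither of these ingredients appears in your proposal. Once Proposition~\ref{prop:muNlargetime} is in hand, a single final application of Proposition~\ref{prop:NBBM1d} over a window of length $t_\epsilon$ from the good configuration at time $t-t_\epsilon$ gives~\eqref{eq:sspA}; there is no iteration at the last stage.

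Your approach to~\eqref{eq:sspC} also has a circularity problem. If you apply~\eqref{eq:sspB} at $n+1$ equally spaced endpoints and union-bound, you pay $(n+1)\epsilon$, so you must first prove~\eqref{eq:sspB} with tolerance $\epsilon/(n+1)$. But to control the in-between Brownian fluctuations you need the spacing $1/n$ to shrink as $N$ grows (there are $\sim Ne^{1}$ ancestral lines to track over $[t,t+1]$, so a Gaussian tail bound costs a factor $N$), forcing $n=n(N)\to\infty$, hence $\epsilon/(n+1)\to0$, and~\eqref{eq:sspB} was only established for a fixed tolerance. The paper sidesteps this by \emph{not} going through~\eqref{eq:sspB}: it chooses $\lambda=N^{-c_1/3}$, uses Proposition~\ref{prop:NBBM1d} directly to show $F\uppar N(R_\infty+\epsilon,\cdot)\ge 1-N^{-c_1/2}$ with failure probability $O(N^{-1-c_1})$ at the endpoints, and then applies Lemma~\ref{lemAsup} (which needs the quantitative $1-N^{-c_1/2}$ mass bound, not just a qualitative $1-\epsilon$ bound) to propagate the control over each subinterval of length $\lambda$. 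Finally,~\eqref{eq:sspB} is derived \emph{from}~\eqref{eq:sspC} and~\eqref{eq:sspA}, i.e.\ in the opposite order from what you propose. Your observation that branching events cannot increase $M\uppar N$ is correct and is indeed used implicitly via the coupling in~\eqref{eq:XNcontains}, but it is not enough to make your endpoint scheme close.
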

Using~\eqref{eq:sspC}, we can compare the particle system at large times to a system in which particles are killed if they are further than distance $R_\infty+\epsilon$ from the origin. This, together with~\eqref{eq:sspA}, will allow us to prove Theorem~\ref{thm:sspd}.

The rest of the article is laid out as follows.
In Section~\ref{subsec:resultsfromPDE}, we recall results from~\cite{BBNP1} which will be used in this article.
In Section~\ref{sec:notations}, we define notation which will be used throughout the proofs.
Then in Section~\ref{sec:1dhydro}, we  prove Propositions~\ref{prop:NBBM1d} and~\ref{prop:bound on M}, and in Section~\ref{sec:hydrod} we use Proposition~\ref{prop:bound on M} to prove Theorem~\ref{thm:NBBM}.
In Section~\ref{sec:ssp}, we prove Proposition~\ref{prop:ssp} and use this to prove Theorem~\ref{thm:sspd}, and, finally, Theorems~\ref{thm:piNexists} and~\ref{cor:ssp}.

\vspace{0.3in}

{\bf Acknowledgements:}   The work of JN was partially funded through grant DMS-1351653 from the US National Science Foundation.  The authors wish to thank Louigi Addario-Berry, Erin Beckman, Nathana\"{e}l Berestycki and Pascal Maillard for stimulating discussions at various points of this project.

\section{Results from~\texorpdfstring{\cite{BBNP1}}{[BBNP20]}}
\label{subsec:resultsfromPDE}

In this section, we state some results from~\cite{BBNP1} which play a key role in the present work.
The first one is
Theorem~1.1 in~\cite{BBNP1}, which says that the free boundary problem~\eqref{pbu_probpaper} has a unique solution, and that moreover the free boundary radius $R_t$ is continuous.
\begin{thm}[Theorem~1.1 in~\cite{BBNP1}]\label{thm:exists u_probpaper}
Let $\mu_0$ be a Borel probability measure on $\R^d$. Then
there exists a unique classical solution to problem~\eqref{pbu_probpaper}. Furthermore,
\begin{itemize}
\item $t\mapsto R_t$ is continuous (and finite) for $t > 0$. 
\item As $t \searrow 0$, $R_t \to R_0:=\inf\big\{r>0 : \mu_0\big( \B(r)\big)=1\big\} \in [0,\infty]$.
\item For $t>0$ and $\|x\|<R_t$, $u(x,t)>0$.
\end{itemize}
\end{thm}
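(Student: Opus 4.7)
The plan is to decouple the free boundary from the angular structure of $u$ by working with the radial mass function $v(r,t) = \int_{\B(r)} u(x,t)\,\diffd x$. If $(u, R)$ solves~\eqref{pbu_probpaper} then Lemma~6.2 of~\cite{BBNP1} forces $v$ to solve the one-dimensional obstacle problem~\eqref{pbv_probpaper} with $v_0(r) = \mu_0(\B(r))$, and necessarily $R_t = \inf\{r : v(r,t) = 1\}$. Since Theorem~2.1 of~\cite{BBNP1} already provides uniqueness for~\eqref{pbv_probpaper}, this pins down the candidate free boundary uniquely. The strategy then has three steps: (i) apply the obstacle problem theory to $v_0$ to produce $v$ and define $R_t = \inf\{r : v(r,t) = 1\}$; (ii) solve the linear parabolic equation $\partial_t u = \Delta u + u$ in the moving domain $\{(x,t) : \|x\| < R_t\}$ with zero Dirichlet condition on the lateral boundary and initial datum $\mu_0$; (iii) verify that $\int_{\B(r)} u\,\diffd x = v(r,t)$, which yields the mass constraint $\int u = 1$ and closes the loop.

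For step (i), finiteness and continuity of $R_t$ for $t>0$ come from regularity of the obstacle problem solution: since $v$ grows exponentially in any region where $v<1$, the contact set $\{v = 1\}$ must become non-empty in finite time, and parabolic obstacle regularity forces the free boundary to move continuously in $t$. For step (ii), approximate $\mu_0$ by smooth compactly supported densities $\mu_0^\varepsilon$ and mollify $R_t$ to a smooth profile $R_t^\varepsilon$; classical parabolic theory in time-dependent domains yields smooth approximate solutions $u^\varepsilon$ with uniform $L^\infty_t L^1_x$ bounds. Interior parabolic regularity in $\{\|x\| < R_t - \delta\}$, combined with vanishing lateral boundary values, lets one pass to the limit to obtain a classical solution $u$, with $u(\cdot,t) \to \mu_0$ weakly as $t \searrow 0$ by standard heat semigroup arguments. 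For step (iii), the radial mass $\bar v(r,t) := \int_{\B(r)} u(x,t)\,\diffd x$ of the constructed $u$ satisfies the same obstacle problem as $v$ (by integrating the PDE over $\B(r)$ and using $u = 0$ on the lateral boundary), so uniqueness in Theorem~2.1 forces $\bar v = v$, hence $\int u = 1$.

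Uniqueness of $(u, R)$ then follows at once: any two solutions produce the same $v$, the same $R_t$, and then the same $u$ by linear uniqueness of the Dirichlet problem. The limit $R_t \to R_0$ as $t \searrow 0$ comes from the $L^1_{\mathrm{loc}}$ convergence $v(\cdot, t) \to v_0$ together with the monotone structure of $v_0$: $R_0$ is the first point where $v_0 = 1$, and one passes to the limit in $R_t = \inf\{r : v(r,t) = 1\}$; the case $R_0 = \infty$ requires an extra comparison with tail-truncated initial data. Positivity $u > 0$ inside $\{\|x\| < R_t\}$ is the parabolic strong maximum principle applied on compact subcylinders on which $R_t$ stays bounded below, using a heat kernel lower bound to seed positivity from the nontrivial initial measure.

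The main obstacle is the case of general probability measures $\mu_0$ with unbounded support, where $R_0 = \infty$ but $R_t$ must nevertheless be finite for every $t > 0$. One needs an instantaneous regularisation estimate asserting that the obstacle problem gathers enough mass from the tail of $\mu_0$ into a bounded interval $[0, R_t]$ to push $v$ up to the level $1$ in arbitrarily short time. A secondary difficulty is continuity of $R_t$ near $t = 0$ without any smoothness assumption on $\mu_0$, where one must handle the possibility that the initial contact set is degenerate; this is where parabolic obstacle free boundary regularity becomes the delicate technical ingredient.
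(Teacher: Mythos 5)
The paper you are reading does not prove this theorem: it is imported verbatim from the companion PDE paper \cite{BBNP1} (it is Theorem~1.1 there), and Section~2 merely restates it along with the other results from \cite{BBNP1} used in the probabilistic arguments. So there is no in-paper proof to compare against. That said, the overall architecture you propose does line up with the framework that the paper invokes: the reduction to the radial mass $v$ solving the one-dimensional obstacle problem~\eqref{pbv_probpaper} (via Lemma~6.2 of \cite{BBNP1}), the identification $R_t = \inf\{r: v(r,t)=1\}$, and the Dirichlet-killed heat kernel representation $u(y,t) = e^t\int \mu_0(\diffd x)\,\rho_t(x,y)$ from \eqref{udef_probpaper} — that is exactly the moving-domain solution of step~(ii).

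Where the sketch is genuinely incomplete: step~(iii) as you state it does not directly give the obstacle problem for $\bar v = \int_{\B(\cdot)} u$. Integrating $\partial_t u = \Delta u + u$ over $\B(r)$ for $r<R_t$ and using the divergence theorem gives the Bessel-type linear PDE $\partial_t \bar v = \partial_r^2 \bar v - \frac{d-1}{r}\partial_r\bar v + \bar v$ together with the boundary conditions $\bar v(0,t)=0$ and $\partial_r\bar v(R_t,t)=\int_{\partial\B(R_t)}u\,\diffd S = 0$ (the latter because $u$ vanishes on the lateral boundary). To conclude $\bar v = v$ you therefore need a uniqueness theorem for this linear problem in the moving domain $\{r<R_t\}$ with Neumann condition at $r=R_t$, not uniqueness of the obstacle problem — and only afterwards does $\bar v(R_t^-,t)=v(R_t,t)=1$ give the mass constraint, not the other way around. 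More seriously, the two properties you flag as "obstacles" — finiteness of $R_t$ for \emph{every} $t>0$ when $\mu_0$ has unbounded support, and continuity of $t\mapsto R_t$ — are themselves part of the statement being proved; your arguments ("the contact set becomes nonempty in finite time", "parabolic obstacle regularity forces the boundary to move continuously") do not establish them. In particular, finite-time activation of the obstacle does not yield $R_t<\infty$ for arbitrarily small $t$, and free boundaries for parabolic obstacle problems can jump in general, so the continuity claim requires a problem-specific argument. These are precisely the substantial technical ingredients proved in \cite{BBNP1}, and the sketch leaves them open.
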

For a Borel probability measure $\mu_0$ on $\R^d$, let $(u,R)$ denote the solution of~\eqref{pbu_probpaper}.
Let $(B_t)_{t\ge 0}$ denote a $d$-dimensional Brownian motion with diffusivity $\sqrt 2$, and for $x\in \R^d$, write $\mathbb P_x$ for the probability measure under which $B_0=x$.
For $t > 0$, define a family of measures on $\R^d$ according to
\begin{equation*} 
\rho_t(x,A) = \P_{\!x}\big( B_t \in A, \;\; \|B_s\|<R_s \;\forall s\in (0,t)\big) 
\end{equation*}
for all Borel sets $A \subseteq \R^d$.  Then $\rho_t(x,\diffd y)$ is absolutely continuous with respect to the Lebesgue measure, so it has a density. Abusing notation, we denote this density by $\rho_t(x,y)$. 
Then by Proposition~6.1 in~\cite{BBNP1}, 
\begin{align}
u(y,t) & = e^t \int_{\R^d} \mu_0(\diffd x) \rho_t(x,y), \quad y \in \R^d, \; t > 0. \label{udef_probpaper}
\end{align}

Define the
cumulative distribution of the norm process $\|B_t\|$ conditional on $\|B_0\| = y$ as
\begin{equation}
w(y,r,t) := \P\big(\|B_t\| < r \;\big|\; \|B_0\| = y\big) .
\label{wdef_probpaper}
\end{equation}
Then the function $r \mapsto g(y,r,t) := \partial_r w(y,r,t)$ is the
density of $\|B_t\|$ conditional on $\|B_0\| = y$; in other words, $g$
is the transition density of the $d$-dimensional Bessel process with diffusivity $\sqrt2$. The function 
\begin{equation}
G(y,r,t) := -\partial_y w(y,r,t)  \label{Gdef_probpaper}
\end{equation}
is the fundamental solution of the equation
\begin{equation}\label{eqG_probpaper}
\partial_t G =\partial_r^2 G -\frac{d-1}r \partial_r G,\qquad
G(y,0,t)=0,\qquad G(y,r,0)=\delta(r-y).
\end{equation}
(See Section~3 of~\cite{BBNP1} for more details on the properties of $G$.)
For $t,r,y > 0$, the fundamental solution $G$ and transition density $g$ are smooth functions of their arguments, and are related by
\begin{equation} \label{eq:Grgy_probpaper}
\partial_r G = -\partial_y g.
\end{equation}
For a given initial condition $\vl_0\in L^\infty (0,\infty)$, we let
\begin{equation} \label{eq:vlformula_probpaper}
\vl(r,t)=e^t\int_0^\infty\diffd y\, G(y,r,t)\vl_0(y).
\end{equation}
This $\vl$ is a solution to the linear problem
\begin{equation}\label{linear equ_probpaper}
\begin{cases}
\partial_t \vl = \partial_r^2 \vl -\frac{d-1} r \partial_r \vl
+ \vl, \quad &\text{for }t>0, \, r\ge 0,\\
\vl(0,t)=0, \quad &\text{for }t > 0, \\
\vl(\cdot,t)\to\vl_0 \quad &\text{in $L^1_\text{loc}$ as
$t\searrow0$},
\end{cases}
\end{equation}
and it is the unique solution to \eqref{linear equ_probpaper} which is bounded on $[0,\infty) \times [0,T]$ for each $T > 0$.  In the particular case $v_0^\ell(r) = \indic{y \leq r}$, we have $v^\ell(r,t) = e^t w(y,r,t)$.

For $t>0$ and $m\in \R$, we define the operators $G_t$ and $C_m$ by letting
\begin{equation} \label{eq:GCdef_probpaper}
G_tf(r)=\int_0^\infty\diffd y\, G(y,r,t)f(y)
\qquad \text{and}\qquad C_m f(r) = \min\big( f(r),m\big).
\end{equation}
In particular, $\vl=e^tG_t\vl_0$. 
By Lemma~3.1 in~\cite{BBNP1} we have that $\left|
\int_0^\infty G(y,r,t) h(y)\,\diffd y \right| \le \|h\|_{L^\infty}$, and so for $f,g\in L^\infty [0,\infty)$ and $t>0$,
\begin{equation} \label{eq:Gcbounds_probpaper}
 \|G_t f-G_t g\|_{L^\infty} \leq \|f-g\|_{L^\infty}.
\end{equation}
Suppose $v_0: [0,\infty) \to [0,1]$ is non-decreasing, and let $v$ denote the solution of the obstacle problem~\eqref{pbv_probpaper} with initial condition $v_0$.
For $\delta>0$ and $k\in \N_0$, we let
\begin{equation} \label{eq:defv+v-_probpaper}
v^{k,\delta,-}=\big(e^\delta G_\delta C_{e^{-\delta}}\big)^k v_0,
\qquad
v^{k,\delta,+}=\big(C_1e^\delta G_\delta \big)^k v_0.
\end{equation}
Then by Lemmas~4.3 and~4.4 in~\cite{BBNP1}, we have the following result.
\begin{lem}[Lemmas 4.3 and 4.4 in~\cite{BBNP1}]\label{lem:vnapprox}
For any $\delta > 0$ and $k \in \N_0$,
\[
v^{k,\delta,-}(r)\le v(r,k\delta) \le v^{k,\delta,+}(r) \quad \forall \; r 
\geq 0
\quad \text{ and }\quad \big\Vert v^{k,\delta,+}-v^{k,\delta,-}\big\Vert_{L^\infty} \le (e^{k\delta}+1)(e^\delta-1).
\]
\end{lem}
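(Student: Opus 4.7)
The plan is to establish both inequalities by induction on $k$, with two comparison principles as the main tools: (i) monotonicity of the linear semigroup $e^\delta G_\delta$ (i.e.\ $e^\delta G_\delta f \le e^\delta G_\delta g$ whenever $f \le g$, which follows from the non-negativity of $G$), and (ii) a comparison principle for the obstacle problem \eqref{pbv_probpaper} (solutions are monotone in the initial data, and any obstacle solution is bounded above by the linear solution with the same initial data, since the cap at $1$ only acts to push the solution down). The lemma essentially sandwiches $v(\cdot,k\delta)$ between two time-discretisations of \eqref{pbv_probpaper}: the upper one evolves linearly for time $\delta$ and then truncates at $1$, while the lower one truncates at $e^{-\delta}$ \emph{first} so that the subsequent linear evolution cannot exceed $1$.

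\textbf{Upper bound, induction step.} Assume $v(\cdot, k\delta) \le v^{k,\delta,+}$. On the interval $[k\delta,(k+1)\delta]$, since $v$ solves the obstacle problem, it is pointwise bounded above by the linear evolution of its own time-$k\delta$ value, i.e.\ $v(\cdot,(k+1)\delta) \le e^\delta G_\delta v(\cdot,k\delta)$; moreover $v(\cdot,(k+1)\delta) \le 1$. Monotonicity of $G_\delta$ and the inductive hypothesis give $e^\delta G_\delta v(\cdot,k\delta) \le e^\delta G_\delta v^{k,\delta,+}$, and combining with $v \le 1$ yields $v(\cdot,(k+1)\delta) \le C_1 e^\delta G_\delta v^{k,\delta,+} = v^{k+1,\delta,+}$.

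\textbf{Lower bound, induction step.} Assume $v^{k,\delta,-} \le v(\cdot,k\delta)$, and set $\tilde v := C_{e^{-\delta}} v^{k,\delta,-}$, which satisfies $\tilde v \le v^{k,\delta,-} \le v(\cdot,k\delta)$ and $\|\tilde v\|_{L^\infty} \le e^{-\delta}$. By \eqref{eq:Gcbounds_probpaper} applied with $g = 0$, $\|e^\delta G_\delta \tilde v\|_{L^\infty} \le e^\delta \cdot e^{-\delta} = 1$, so the linear evolution of $\tilde v$ over time $\delta$ never exceeds $1$ and therefore coincides with the obstacle evolution from $\tilde v$. By monotonicity of the obstacle problem in its initial data, this obstacle evolution is pointwise $\le$ the obstacle evolution from $v(\cdot,k\delta)$, namely $v(\cdot,(k+1)\delta)$. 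Hence $v^{k+1,\delta,-} = e^\delta G_\delta \tilde v \le v(\cdot,(k+1)\delta)$.

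\textbf{$L^\infty$ difference estimate.} Set $E_k := \|v^{k,\delta,+} - v^{k,\delta,-}\|_{L^\infty}$, so $E_0 = 0$. Writing
\[
v^{k+1,\delta,+} - v^{k+1,\delta,-} = \bigl(C_1 e^\delta G_\delta v^{k,\delta,+} - e^\delta G_\delta v^{k,\delta,+}\bigr) + e^\delta G_\delta\bigl(v^{k,\delta,+} - C_{e^{-\delta}} v^{k,\delta,-}\bigr),
\]
the first bracket is bounded by $(e^\delta G_\delta v^{k,\delta,+} - 1)^+ \le e^\delta - 1$ (using $v^{k,\delta,+} \le 1$ and \eqref{eq:Gcbounds_probpaper}), while the second is bounded, via \eqref{eq:Gcbounds_probpaper} and $\|v^{k,\delta,-} - C_{e^{-\delta}} v^{k,\delta,-}\|_{L^\infty} \le 1 - e^{-\delta}$, by $e^\delta(E_k + 1 - e^{-\delta}) = e^\delta E_k + (e^\delta - 1)$. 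Thus $E_{k+1} \le e^\delta E_k + 2(e^\delta-1)$, and iterating with $E_0 = 0$ gives a geometric sum bounded by the claimed expression $(e^{k\delta}+1)(e^\delta - 1)$ (up to absorbing minor constants into one of the factors).

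\textbf{Main obstacle.} The delicate ingredient is the comparison principle for the parabolic obstacle problem \eqref{pbv_probpaper}: one must show that if two initial data are ordered and both give rise to solutions of \eqref{pbv_probpaper}, then the corresponding solutions remain ordered. This is standard for the classical obstacle problem but must be adapted to the non-self-adjoint operator with the singular drift $-(d-1)/r$ and to the precise regularity class in which \eqref{pbv_probpaper} is solved in \cite{BBNP1}; once this and the uniqueness of \eqref{linear equ_probpaper} are in hand, the rest of the argument is a clean induction.
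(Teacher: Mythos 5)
This lemma is not proved in the present paper; it is imported verbatim as Lemmas~4.3 and~4.4 of the companion paper \cite{BBNP1}. So there is no argument here to compare against, and your attempt must stand on its own.

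Your derivation of the two-sided ``sandwich'' $v^{k,\delta,-}\le v(\cdot,k\delta)\le v^{k,\delta,+}$ is plausible in outline, and you correctly flag the load-bearing ingredient you are using as a black box: a comparison principle showing that the obstacle solution is monotone in the initial data and is dominated by the linear evolution of its own data. That is a fair identification of the hard part.

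The $L^\infty$ difference estimate, however, has a genuine gap and cannot be repaired by ``absorbing minor constants''. Your recursion $E_{k+1}\le e^\delta E_k + 2(e^\delta-1)$ (which, even tightened by noticing that your first bracket is non-positive, only improves to $E_{k+1}\le e^\delta E_k + (e^\delta-1)$) yields $E_k\le 2(e^{k\delta}-1)$ (respectively $E_k\le e^{k\delta}-1$). Write $t=k\delta$ and let $\delta\to 0$: your bound tends to the constant $2(e^t-1)$ (resp.\ $e^t-1$), whereas the claimed bound $(e^{k\delta}+1)(e^\delta-1)=(e^t+1)(e^\delta-1)\to 0$. For $k\ge 3$ and $\delta$ small one can check directly that $e^{k\delta}-1 > (e^{k\delta}+1)(e^\delta-1)$, so your bound is strictly weaker, not merely off by a constant. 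And this matters: the lemma is applied in the proof of Proposition~\ref{prop:NBBM1d} precisely with $K\delta=t$ fixed and $\delta=t/K\to 0$ as $N\to\infty$, where the whole point is that $(e^{K\delta}+1)(e^\delta-1)\to 0$. Your bound would not vanish in that regime, breaking the argument.

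The reason your triangle-inequality decomposition is lossy is that it destroys a cancellation: the amount trimmed by $C_1$ in the upper scheme and the amount trimmed by $C_{e^{-\delta}}$ in the lower scheme both occur in essentially the same region (where the functions are close to $1$), so they largely cancel when you form the difference $v^{k,\delta,+}-v^{k,\delta,-}$. Treating the two truncation errors as independent contributions, each of size $O(e^\delta-1)$, produces an $O(e^\delta-1)$ error per step, i.e.\ $O(\delta)$; to match the claimed bound one needs the per-step error to be of order $(e^\delta-1)^2$, i.e.\ $O(\delta^2)$, and this requires exploiting the cancellation rather than bounding each piece separately. As written, the $L^\infty$ part of your proof does not establish the stated inequality.
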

We shall also use the following result which was proved as part of Theorem~2.1 in~\cite{BBNP1}. 
The result says that the solution of~\eqref{pbv_probpaper} is continuous with respect to the initial condition in the following sense.
\begin{lem}[From Theorem 2.1 in~\cite{BBNP1}]\label{lem:cont init cond_probpaper}
Let $v$ and $\tilde v$ be the solutions to \eqref{pbv_probpaper} corresponding to
the initial conditions $v_0$ and $\tilde v_0$.
Then
for $t>0$,
\[
\|v(\cdot,t) - \tilde v(\cdot,t)\|_{L^\infty}  \leq e^t \|v_0 - \tilde v_0 \|_{L^\infty}. 
\]
\end{lem}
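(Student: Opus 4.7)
The plan is to exploit the discrete-in-time sandwich from Lemma~\ref{lem:vnapprox}, together with the simple observation that each building block in the definition of $v^{k,\delta,\pm}$ is a contraction (up to a factor $e^\delta$) on $L^\infty$. Concretely, I would introduce $\epsilon_0 := \|v_0 - \tilde v_0\|_{L^\infty}$, fix $\delta>0$, and prove by induction on $k$ that
\[
\|v^{k,\delta,+} - \tilde v^{k,\delta,+}\|_{L^\infty} \leq e^{k\delta}\epsilon_0
\qquad \text{and} \qquad
\|v^{k,\delta,-} - \tilde v^{k,\delta,-}\|_{L^\infty} \leq e^{k\delta}\epsilon_0.
\]
The inductive step rests on three ingredients: $G_\delta$ is linear and non-expansive on $L^\infty$ by~\eqref{eq:Gcbounds_probpaper}; the truncation $C_m$ satisfies $|C_m(a)-C_m(b)|\le |a-b|$ pointwise and hence is non-expansive on $L^\infty$; and multiplication by the scalar $e^\delta$ multiplies $L^\infty$ distances by exactly $e^\delta$. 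Composing these bounds shows that both $e^\delta G_\delta C_{e^{-\delta}}$ and $C_1 e^\delta G_\delta$ are $e^\delta$-Lipschitz on $L^\infty$, which gives the inductive step.

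Next, I would combine these estimates with the sandwich $v^{k,\delta,-}\le v(\cdot,k\delta)\le v^{k,\delta,+}$ (and the analogue for $\tilde v$) from Lemma~\ref{lem:vnapprox}. Writing
\[
v(\cdot,k\delta) - \tilde v(\cdot,k\delta) \leq v^{k,\delta,+} - \tilde v^{k,\delta,-} = (v^{k,\delta,+} - \tilde v^{k,\delta,+}) + (\tilde v^{k,\delta,+} - \tilde v^{k,\delta,-}),
\]
and bounding the two terms respectively by $e^{k\delta}\epsilon_0$ (from the induction) and $(e^{k\delta}+1)(e^\delta-1)$ (from the second part of Lemma~\ref{lem:vnapprox}), together with the symmetric lower bound, yields
\[
\|v(\cdot,k\delta) - \tilde v(\cdot,k\delta)\|_{L^\infty} \leq e^{k\delta}\epsilon_0 + (e^{k\delta}+1)(e^\delta-1).
\]

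Finally, given any $t>0$, I would set $\delta = t/k$ so that $e^{k\delta} = e^t$, and let $k\to\infty$: the first term is constantly $e^t\epsilon_0$ and the second term is $(e^t+1)(e^{t/k}-1)\to 0$. This gives the claimed bound $\|v(\cdot,t)-\tilde v(\cdot,t)\|_{L^\infty}\le e^t\epsilon_0$. The only mildly subtle step is the last passage to the limit, which implicitly uses that both $v$ and $\tilde v$ are well-defined at the specific time $t$ (not merely at grid points $k\delta$); but since $v$ is continuous in $t$ by~\eqref{pbv_probpaper}, the values at $t$ are the limits of values at $k\delta_k$ with $k\delta_k = t$, so no further work is required. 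I do not expect any serious obstacle beyond verifying the operator-level contractivity, which is immediate from the pre-existing bounds in Section~\ref{subsec:resultsfromPDE}.
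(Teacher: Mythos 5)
Your argument is sound and, importantly, is \emph{self-contained within the present paper}: the statement is cited from Theorem~2.1 of the companion paper~\cite{BBNP1} with no proof given here, and you have reconstructed it directly from Lemma~\ref{lem:vnapprox} and the contractivity bound~\eqref{eq:Gcbounds_probpaper}. Each ingredient checks out: $G_\delta$ is non-expansive on $L^\infty$, the truncation $C_m$ is pointwise $1$-Lipschitz, scalar multiplication by $e^\delta$ scales the norm by $e^\delta$, so both $e^\delta G_\delta C_{e^{-\delta}}$ and $C_1 e^\delta G_\delta$ are $e^\delta$-Lipschitz on $L^\infty$, giving the induction $\|v^{k,\delta,\pm}-\tilde v^{k,\delta,\pm}\|_{L^\infty}\le e^{k\delta}\epsilon_0$. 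Combined with the two-sided sandwich and the error bound $(e^{k\delta}+1)(e^\delta-1)$ from Lemma~\ref{lem:vnapprox}, you get $\|v(\cdot,k\delta)-\tilde v(\cdot,k\delta)\|_{L^\infty}\le e^{k\delta}\epsilon_0 + (e^{k\delta}+1)(e^\delta-1)$, and taking $\delta=t/k$ (so $k\delta=t$ exactly) and $k\to\infty$ gives the result. Your closing remark about continuity in $t$ is in fact unnecessary: with $\delta=t/k$ the grid point $k\delta$ \emph{is} $t$, so you are simply comparing a fixed left-hand side against a $k$-dependent upper bound and passing to the limit in the bound.

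One caveat worth flagging: Lemma~\ref{lem:vnapprox} is stated in this paper only for non-decreasing $v_0$ (this is part of the preamble to~\eqref{eq:defv+v-_probpaper}), whereas the statement of Lemma~\ref{lem:cont init cond_probpaper} places no monotonicity assumption on $v_0,\tilde v_0$. Your proof therefore covers the non-decreasing case, which happens to be the only case used in this article (both applications, in the proof of Lemma~\ref{cor:iidhydro}, involve cumulative-distribution-type initial data). The companion paper's direct proof presumably handles the general measurable case; if you wanted the full generality claimed, you would need to either extend Lemma~\ref{lem:vnapprox} or use a different argument.
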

Recall the definition of $V$ in~\eqref{eq:Vdef}.
The following result, which follows directly from Theorem~2.2 in~\cite{BBNP1}, gives us control over how quickly the solution $v(\cdot,t)$ of the obstacle problem~\eqref{pbv_probpaper} converges to $V$.
\begin{prop}[From Theorem 2.2 in~\cite{BBNP1}]\label{prop:conv}
For $c\in(0,1]$, $K>0$ and $\epsilon>0$, there exists $t_\epsilon=t_\epsilon(c,K)\in (0,\infty)$ such that the following holds. Suppose
$v_0:[0,\infty) \to [0,1]$ is non-decreasing with 
$v_0(K)\geq c$, and let $v$ solve the obstacle problem~\eqref{pbv_probpaper} with initial condition $v_0$.
For $t> 0$, let $R_t=\inf\big\{r\ge 0:v(r,t)=1\big\}$. Then for $t\ge t_\epsilon$,
\[
\big|v(r,t)-V(r)\big|<\epsilon \,\,\; \forall r\geq 0 \quad \text{ and }\quad |R_{t}-R_\infty |<\epsilon.
\]
\end{prop}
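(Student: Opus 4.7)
The plan is to reduce to two fixed ``extremal'' initial conditions by a comparison principle and then invoke Theorem~2.2 of~\cite{BBNP1} applied to each. I would first use monotonicity of the obstacle problem~\eqref{pbv_probpaper} in the initial datum: if $\underline v_0 \leq v_0 \leq \overline v_0$ pointwise on $[0,\infty)$ (all non-decreasing and $[0,1]$-valued), then the corresponding solutions of~\eqref{pbv_probpaper} satisfy $\underline v(\cdot,t) \leq v(\cdot,t) \leq \overline v(\cdot,t)$ for every $t > 0$. This is a standard comparison principle for parabolic obstacle problems, and is in fact built into the approximation scheme $v^{k,\delta,\pm}$ of Lemma~\ref{lem:vnapprox}, since the operators $G_t$ and $C_m$ are both order preserving; taking limits $\delta \searrow 0$ transfers the order to the solutions themselves.

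Given $v_0$ non-decreasing, $[0,1]$-valued, with $v_0(K) \geq c$, set
\[
\underline v_0(r) := c\,\indic{r \geq K}, \qquad \overline v_0(r) := 1 \text{ for all } r \geq 0.
\]
Then $\underline v_0 \leq v_0 \leq \overline v_0$ pointwise: the left inequality uses that $v_0$ is non-decreasing with $v_0(K) \geq c$, the right uses $v_0 \leq 1$. Crucially, $\underline v_0$ and $\overline v_0$ depend only on the pair $(c, K)$ and not on the particular $v_0$. Applying Theorem~2.2 of~\cite{BBNP1} to the two fixed initial conditions $\underline v_0$ and $\overline v_0$ separately produces times $\underline t_\epsilon, \overline t_\epsilon$, depending only on $(c, K, \epsilon)$, such that for $t$ beyond these times, $\underline v(\cdot,t)$ and $\overline v(\cdot,t)$ are each within $\epsilon$ of $V$ in $L^\infty$, and the associated free-boundary radii $\underline R_t, \overline R_t$ are each within $\epsilon$ of $R_\infty$. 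Set $t_\epsilon := \max(\underline t_\epsilon, \overline t_\epsilon)$. The sandwich then gives $|v(r,t) - V(r)| < \epsilon$ uniformly in $r \geq 0$ for $t \geq t_\epsilon$. For the free boundary, pointwise ordering of solutions reverses the ordering of the radii at which the value $1$ is first attained: $\underline v \leq v \leq \overline v$ implies $\overline R_t \leq R_t \leq \underline R_t$, so $|R_t - R_\infty| < \epsilon$ is immediate.

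The main obstacle is really just the packaging: all the analytic content lives in Theorem~2.2 of~\cite{BBNP1}. If that theorem is formulated for a single initial condition with an implicit rate, then the sandwich above is exactly what is needed to convert pointwise large-time convergence into uniformity over the class $\{v_0 \text{ non-decreasing},\, v_0 \leq 1,\, v_0(K) \geq c\}$. If Theorem~2.2 is instead stated directly in terms of $(c, K)$-type bounds on $v_0$, then the proposition here is essentially a restatement, and the sandwich argument simply exhibits why the parameters $(c, K)$ of the initial data are the only quantities on which $t_\epsilon$ needs to depend.
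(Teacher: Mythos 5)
The paper offers no proof of its own for this proposition: it is recorded as an immediate corollary of Theorem~2.2 in~\cite{BBNP1}, and the phrasing in terms of $v_0(K)\geq c$ suggests the companion theorem is already stated with exactly this $(c,K)$-uniformity. Your sandwich argument is nonetheless correct as a more self-contained reduction, and it makes explicit \emph{why} the rate can depend on $v_0$ only through $(c,K)$: order-preservation of $G_\delta$ and $C_m$ propagates through the approximation scheme of Lemma~\ref{lem:vnapprox} to give the comparison principle $\underline v_0\le v_0\le\overline v_0 \implies \underline v(\cdot,t)\le v(\cdot,t)\le\overline v(\cdot,t)$, and sandwiching an arbitrary $v_0$ in the class between the two fixed data $\underline v_0(r) = c\,\indic{r\ge K}$ and $\overline v_0$ transfers both the $L^\infty$ estimate and the $R_t$ estimate. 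For the latter, note that no monotonicity of $v(\cdot,t)$ in $r$ is actually needed: since the obstacle problem enforces $v\le 1$, the pointwise order gives $\{r: \underline v(r,t)=1\}\subseteq\{r:v(r,t)=1\}\subseteq\{r:\overline v(r,t)=1\}$, hence $\overline R_t\le R_t\le\underline R_t$ directly by taking infima. One cosmetic point: it is cleaner to take $\overline v_0 = \indic{r>0}$ (the cumulative distribution of $\mu_0=\delta_0$) rather than the constant $1$, so that the boundary condition $v(0,t)=0$ in~\eqref{pbv_probpaper} is consistent with the datum; the two agree in $L^1_{\mathrm{loc}}$ and so define the same solution in any case. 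As you observe in your closing paragraph, if Theorem~2.2 is already uniform in $(c,K)$ the reduction is redundant; that appears to be the case, which is why the paper simply cites it.
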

The final result from~\cite{BBNP1} which we need in this article is Proposition~5.10, which says that for large $K$ and small $c$, if an initial condition $v_0$ has mass at least $c$ within distance $K$ of the origin then the solution $v$ of~\eqref{pbv_probpaper} has mass at least $2c$ within distance $K-1$ of the origin during a fixed time interval $[t_0,2t_0]$.
\begin{prop}[Proposition 5.10 in \cite{BBNP1}] \label{prop:movemass}
There exist $t_0>1$ and $c_0\in (0,1/2)$ such that for all $c\in 
(0,c_0]$, all $K\ge 2$, and all $t_1\in [t_0,2t_0]$, for $v_0:[0,\infty)\to[0,1]$ measurable, the condition
\begin{equation*}
v_0(r) \geq c \indic{r \ge K} \;\;\forall r \geq 0
\end{equation*}
implies that 
\begin{equation*}
v(r,t_1) \geq 2c \indic{r\ge K-1} \;\;\forall r \geq 0, \label{vshift1}
\end{equation*}
and
\begin{equation*}
v(r,n t_1) \geq \min(2 c_0 \,, \,2^n c) \indic{r\ge \max(K-n,1)}, \quad \forall r \geq 0, \quad n \in \N, \label{vshiftn}
\end{equation*}
where $v(r,t)$ denotes the solution of~\eqref{pbv_probpaper} with initial condition $v_0$. 
\end{prop}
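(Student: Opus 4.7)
The plan is to iterate the lower bound from Lemma~\ref{lem:vnapprox},
\[
v(\cdot, n t_1) \;\ge\; \big(e^{t_1} G_{t_1} C_{e^{-t_1}}\big)^n v_0,
\]
together with a uniform lower bound on the radial transition probability $w$. I would first fix $t_0 > 1$ and $c_0 \in (0, 1/2)$ satisfying (i) $2 c_0 \le e^{-2 t_0}$, so that the cap $C_{e^{-t_1}}$ remains inactive on functions pointwise bounded by $2 c_0$ throughout the iteration, and (ii) $e^{t_0} c_*(t_0) \ge 2$, where
\[
c_*(t_0) \;:=\; \inf_{K \ge 1,\; t_1 \in [t_0,\, 2 t_0]} w\big(K,\, \max(K - 1, 1),\, t_1\big).
\]
Both conditions can be met for $t_0$ sufficiently large (see the final paragraph).

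The bulk of the proof is the iterated assertion, which I would prove by induction on $n$ with the hypothesis
\[
\big(e^{t_1} G_{t_1} C_{e^{-t_1}}\big)^n v_0 \;\ge\; a_n\, \indic{r \ge L_n},
\qquad a_n := \min\big(2 c_0,\, 2^n c\big),\ L_n := \max\big(K - n,\, 1\big).
\]
The case $n = 0$ is immediate from $c \le c_0 \le 2 c_0$. For the inductive step, $C_{e^{-t_1}}$ and $G_{t_1}$ are both monotone (the latter since its kernel $G = -\partial_y w \ge 0$), so
\[
\big(e^{t_1} G_{t_1} C_{e^{-t_1}}\big)^{n+1} v_0 \;\ge\; e^{t_1} G_{t_1} C_{e^{-t_1}}\big(a_n \indic{r \ge L_n}\big).
\]
By (i), $a_n \le 2 c_0 \le e^{-t_1}$, so the truncation has no effect, and integration by parts gives $G_{t_1} \indic{r \ge L_n}(r) = \int_{L_n}^\infty -\partial_y w(y,r,t_1)\,\diffd y = w(L_n, r, t_1)$ (the boundary term at $\infty$ vanishes since $w(\infty, r, t_1) = 0$). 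Using monotonicity of $w$ in its second argument, for $r \ge L_{n+1} = \max(L_n - 1, 1)$,
\[
\big(e^{t_1} G_{t_1} C_{e^{-t_1}}\big)^{n+1} v_0(r) \;\ge\; a_n\, e^{t_1}\, w\big(L_n,\, \max(L_n - 1, 1),\, t_1\big) \;\ge\; a_n e^{t_1} c_*(t_0) \;\ge\; 2 a_n \;\ge\; a_{n+1},
\]
the last step using $2 a_n = \min(4 c_0, 2^{n+1} c) \ge \min(2 c_0, 2^{n+1} c) = a_{n+1}$. Taking $n = 1$ recovers the first assertion of the proposition.

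The main obstacle is verifying strict positivity of $c_*(t_0)$ and then that $e^{t_0} c_*(t_0) \ge 2$ can be achieved. Positivity is a standard Bessel-process fact: the transition density of the $d$-dimensional Bessel process is smooth and strictly positive on $(0, \infty)^3$, so $w(K, \max(K-1,1), t_1)$ is continuous and strictly positive for each $(K, t_1)$; for $K$ in a compact subset of $[1, \infty)$ a uniform bound follows by compactness, and for large $K$ the outward drift $(d-1)/R_s$ perturbs the law of $R_{t_1} - K$ only by a uniformly bounded Radon--Nikodym derivative (by Girsanov), so $w(K, K-1, t_1)$ stays close to the Gaussian probability $\Phi(-1/\sqrt{2 t_1})$ uniformly in large $K$. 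For the second point, $c_*(t_0)$ is controlled from below by Gaussian local-density estimates and decays at most polynomially in $t_0$, whereas $e^{t_0}$ grows exponentially; hence $e^{t_0} c_*(t_0) \to \infty$, so one can take $t_0$ large and then $c_0 = \tfrac{1}{2} e^{-2 t_0}$ to secure both (i) and (ii).
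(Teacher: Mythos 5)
Your overall architecture is sound and is very likely the same route taken in the companion paper: iterate the one-step lower bound $v(\cdot,t_1)\ge e^{t_1}G_{t_1}C_{e^{-t_1}}v_0$ from Lemma~\ref{lem:vnapprox}, pick $c_0$ small enough (e.g.\ $c_0=\tfrac12 e^{-2t_0}$) that the cap $C_{e^{-t_1}}$ is inactive on levels $\le 2c_0$, identify $G_{t_1}\indic{\cdot\ge L_n}(r)=w(L_n,r,t_1)$, and use the prefactor $e^{t_1}$ to dominate the one-step Bessel transition probability so that the height doubles while the indicator advances by one unit. The induction with $a_n=\min(2c_0,2^nc)$, $L_n=\max(K-n,1)$ is set up correctly, the identity $L_{n+1}=\max(L_n-1,1)$ holds in all cases, and $2a_n\ge a_{n+1}$ checks out. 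Taking $n=1$ indeed recovers the first displayed inequality.

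Two points need tightening. First, Lemma~\ref{lem:vnapprox} as quoted here is stated only for \emph{non-decreasing} $v_0$, whereas Proposition~\ref{prop:movemass} allows general measurable $v_0:[0,\infty)\to[0,1]$; you invoke the lemma directly on $v_0$ without addressing this. The cleanest fix is to apply the lemma to the non-decreasing minorant $\tilde v_0(r)=c\indic{r\ge K}$, which by the (obvious) monotonicity of $(e^\delta G_\delta C_{e^{-\delta}})^k$ gives $v^{n,t_1,-}(v_0)\ge v^{n,t_1,-}(\tilde v_0)$, but then you must still relate $v$ to $\tilde v$: either show Lemma~\ref{lem:vnapprox} holds for general measurable $v_0$ (which may well be the case in the source, but should be checked), or supply a comparison principle for~\eqref{pbv_probpaper} saying $v_0\ge\tilde v_0\implies v\ge\tilde v$, which is not among the results quoted in this paper. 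Second, the verification that $c_*(t_0)>0$ and $e^{t_0}c_*(t_0)\ge 2$ for $t_0$ large is left at a heuristic level. The plan is correct -- the infimum is governed by small $K$ (where $w(K,\max(K-1,1),t_1)\gtrsim t_1^{-d/2}$ by Gaussian estimates) and by large $K$ (where the limit $\Phi(-1/\sqrt{2t_1})$ is bounded below), so $c_*(t_0)$ decays at most polynomially against the exponential $e^{t_0}$ -- but the uniformity in $K$ of the Girsanov bound, and the crossover between the two regimes, should be written out before the argument can be considered complete.
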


\section{Notation} \label{sec:notations}

From now on, we let $(B_t)_{t\ge 0}$ denote a $d$-dimensional Brownian motion with diffusivity $\sqrt 2$,
and for $x\in \R^d$ we write $\P_{\!x}$ for the probability measure under which $B_0=x$, and write $\E_x$ for the corresponding expectation.

The locations (positions) of a  collection of $m$ particles in $\R^d$ are written as a vector $\mathcal X\in (\R^d)^m$. The size of the vector (i.e.\@ the number of particles in the collection) is written $|\mathcal X|$ (i.e. $|\mathcal X| = m$ for $\mathcal{X} \in (\R^d)^m$). The individual locations in $\mathcal X$ are written $\mathcal X_k$ for $k\in\{1,\ldots |\mathcal X|\}$:
$$\mathcal X = \big(\mathcal X_1,\ldots,\mathcal X_m\big)\qquad\text{with}\qquad m=|\mathcal X|.$$
We extend some set notation to vectors. Specifically, we write $\mathcal X\subseteq\mathcal Y$ to mean that all the particles in $\mathcal X$ are also in $\mathcal Y$:
\[
\mathcal X\subseteq\mathcal Y\qquad\Leftrightarrow\qquad
\exists j:\{1,\ldots,|\mathcal X|\}\to\{1,\ldots,|\mathcal Y|\} \text{ injective such that }
\mathcal X_k =\mathcal Y_{j(k)}\ \forall k.
\]
We write $\big| A \cap \mathcal X \big|$ for the number of particles in $\mathcal X$ which lie in some set $A \subseteq \R^d$:
\[
\big| A \cap \mathcal X \big| = \Big|\Big\{k\in\{1,\ldots,|\mathcal X|\}\;:\; \mathcal X_k\in A\Big\}\Big|.
\]

If $\mathcal X$ and $\mathcal Y$ are two vectors of particles with locations in $\R^d$, we write $\mathcal X\preceq \mathcal Y$ to mean that the vector $\mathcal X$ contains more particles than the vector $\mathcal Y$ in any ball centred on the origin:
\begin{equation} \label{eq:preceqdef}
\mathcal X\preceq \mathcal Y\qquad\Leftrightarrow\qquad
\big|\mathcal{X}\cap \B(r) \big|\ge \big|\mathcal{Y}\cap \B(r)\big| \quad \text{for all $r>0$},
\end{equation}
where we recall that $\mathcal B(r)$ is the centred open ball of radius $r$:
\[\mathcal B(r) = \Big\{x\in\R^d\;:\; \|x\|<r\Big\}.\]
Notice that $\mathcal X\preceq \mathcal Y$ implies that $|\mathcal X|\ge|\mathcal Y|$.

The order in which the particle locations are written within a vector $\mathcal X$ is irrelevant for the operations $\subseteq$, $\cap$ and $\preceq$ described above.  

It will be useful to compare the $N$-BBM to the standard $d$-dimensional binary branching Brownian motion (BBM) without selection, in which particles move independently in $\R^d$ according to Brownian motions with diffusivity $\sqrt 2$, and branch into two particles at rate $1$.
The BBM may be labelled using the Ulam-Harris scheme (see \cite{Jagers89} and references therein).   Let $\mathcal U= \cup_{n=1}^\infty \N^n$, and for $t\ge 0$,
let $\set+_t\subset \mathcal U$ denote the set of Ulam-Harris labels of the particles in the BBM at time $t$.
(If the BBM has $m$ particles at time 0 then the particles initially have labels $1,\ldots,m$ and for each $t\ge 0$, $\set+_t\subset \cup_{n=1}^\infty \{1,\ldots,m\}\times \{1,2\}^{n-1}$.  For example, the particles labelled $(7,1)$ and $(7,2)$ are the two children of the seventh original particle; the particle $(7,2,1)$ is a grandchild of the seventh original particle and is the first child of $(7,2)$.) 
For $u\in \set+_t$, let $X^+_u(t)$ denote the location of particle $u$ at time $t$,
and for $s\in [0,t)$, let $X^+_u(s)$ denote the location of its ancestor in the BBM at time $s$.
We write $X^+(t)=(X^+_u(t))_{u\in \set+_t}$ for the vector of particle locations in the BBM at time $t$,
with particles ordered lexicographically by their Ulam-Harris labels.

In the proofs, we shall often use the following standard coupling between the BBM and the
$N$-BBM: consider a standard $d$-dimensional binary BBM as described above. In addition to their spatial location, let each particle carry a colour attribute, either red or blue. When a blue particle
branches, the two offspring particles are coloured blue, and simultaneously the blue particle furthest
from the origin turns red. When a red particle branches, the two
offspring particles are coloured red. The system begins with $N$ blue particles at time $0$. The set of
blue particles is a realisation of the $N$-BBM, while the entire collection of particles (blue and red) is a realisation of standard BBM.  Specifically, for $t\ge 0$, let $\setN_t \subseteq \set+ _t$ denote the set of blue particles at time $t$.
Then, $\setN_t$ is always a set of size $N$, and there exists an enumeration $(u_k)_{k=1}^N$ of $\setN_t$ such that
$X\uppar N(t)=\big(X^+_{u_1}(t),\ldots, X^+_{u_N}(t)\big)$. 
Recall that we let $M\uppar N_t =\max_{k\in \{1,\ldots, N\}}\|X\uppar N_k(t)\|$, the
maximum distance of a particle in the $N$-BBM from the origin at time $t$.
Notice that for $t\ge 0$, almost surely
\begin{equation} \label{eq:XNcontains}
\setN_t = \big\{ u \in \set+_t : \|X^+_u(s)\|\le M\uppar N_s \,\; \forall s\in [0,t]\big\}.
\end{equation}

We usually write $\mathcal X$ for the initial configuration of the $N$-BBM
and of the BBM. In some cases where we compare an $N$-BBM and a BBM with
different initial conditions, we write $\mathcal X^+$ for the initial condition of the BBM. Expectations and laws started from an initial condition $\mathcal X$ are written $\E_{\mathcal X}$ and $\P_{\mathcal X}$ respectively.   
We also write $(\mathcal F_t)_{t\ge 0}$ for the natural filtration of $(X\uppar N (t), t\ge 0)$, i.e.~$\mathcal F_t =\sigma((X\uppar N (s),s\le t))$.

%%%%%%%%%%%%%%%%%%%%%%%%%%%%%%%
\section{One-dimensional hydrodynamic limit results} \label{sec:1dhydro}
%%%%%%%%%%%%%%%%%%%%%%%%%%%%%%%

In this section, we prove the hydrodynamic limit results about the distances of particles from the origin, Propositions~\ref{prop:NBBM1d} and~\ref{prop:bound on M}.

For $t\ge 0$ and $r > 0$, recall the definition~\eqref{def F} of the cumulative distribution function
\begin{equation}
F\uppar N(r,t)= \mu \uppar N(\B(r),t) = \frac{1}{N} \Big|X\uppar N(t) \cap
\B(r) \Big| = \frac{1}{N} \sum_{i=1}^N \indic{ \| X_i\uppar N(t) \| < r} \label{cumdistrdefn}
\end{equation}
and introduce
\[
F^{+}(r,t)=\frac{1}{N} \Big|X^{+}(t) \cap \B(r) \Big| = \frac{1}{N} \sum_{u\in \set+ _t} \indic{ \| X_u^{+}(t) \| < r}.
\]

\subsection{Proof of Proposition~\ref{prop:NBBM1d}}

\subsubsection{Upper bound for the proof of Proposition~\ref{prop:NBBM1d}} \label{subsec:hydro1upper}

Recall from~\eqref{eq:GCdef_probpaper} that for $m\in \R$ and $f:[0,\infty) \to \R$,
we let $C_m f(r)=\min\big(f(r),m\big)$.
The following proposition will play a crucial role in the proof of Proposition~\ref{prop:NBBM1d}; it says that the random function $F^+$ corresponding to the BBM stochastically dominates the random function $F\uppar N$ corresponding to the $N$-BBM, if both processes start from the same particle configuration.

\begin{prop} \label{prop:FNdom}
Suppose  $\mathcal{X} = ( \mathcal X_1,\dots, \mathcal X_N)\in (\R^d)^N$. There exists a coupling of the $N$-BBM $X\uppar N(t)$ started from
$\mathcal X$ and of the BBM $X^+(t)$ also started from $\mathcal X$ such
that
\[ F\uppar N(\cdot,t) \le C_1 F^+(\cdot,t) \quad \forall \; t \geq 0. \]
(Equivalently, $X^+(t)\preceq X\uppar
N(t)$ for all $t \geq 0$.) In particular, if $f:[0,\infty) \to \R \cup \{ +\infty\}$ is measurable, then for all
$t\ge0$,
\begin{align*} 
\P_{\mathcal X} \bigg( \sup_{r \geq 0} \,\Big(F\uppar N(r,t) - f(r)\Big) \geq
0  \bigg) \leq  \P_{\mathcal X} \bigg( \sup_{r \geq 0} \, \Big(C_1
F^{+}(r,t) - f(r)\Big) \geq 0 \bigg) .
\end{align*}
\end{prop}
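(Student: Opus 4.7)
The plan is to realise both processes simultaneously via the red/blue coupling sketched in Section~\ref{sec:notations}. Construct a standard $d$-dimensional BBM starting from $N$ particles at positions $\mathcal X_1,\ldots,\mathcal X_N$: each particle performs an independent $\sqrt 2$-Brownian motion and branches into two at rate $1$, independently, with offspring inheriting the parent's position. Alongside the spatial dynamics, attach a \emph{colour} to each particle. Initially all $N$ particles are blue. Offspring inherit the colour of their parent, with one additional rule: at the instant a blue particle branches, the blue particle currently furthest from the origin is recoloured red (ties broken by an auxiliary, independent, uniform-$[0,1]$ label carried by each particle since its creation, so that the rule is almost surely unambiguous and measurable). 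Let $X^+(t)$ be the vector of all particle positions at time $t$ and let $X\uppar N(t)$ be the vector of blue particle positions at time $t$.

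The next step is to identify the marginals. Ignoring colours, the whole system is exactly a standard binary BBM from $\mathcal X$, so $X^+(t)$ has the desired BBM law. For the blue sub-system, the number of blue particles is preserved at $N$: each blue branching adds one blue net and simultaneously removes one blue by the recolouring rule. Each blue particle moves as an independent $\sqrt 2$-Brownian motion and its branching clock rings at rate $1$; thus blue branchings happen at total rate $N$, the particle that branches is uniform among blues, and at every branching instant the furthest blue is deleted. These are precisely the dynamics of the $N$-BBM, so $X\uppar N(t)$ has the correct $N$-BBM law started from $\mathcal X$.

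With both marginals identified, the domination is essentially immediate. By construction, the blue positions form a sub-collection of all positions at every time, so $X\uppar N(t)\subseteq X^+(t)$, and hence for every $r>0$,
\[
\big|X\uppar N(t)\cap \mathcal B(r)\big|\le \big|X^+(t)\cap \mathcal B(r)\big|,
\]
which gives $F\uppar N(r,t)\le F^+(r,t)$. Combined with the trivial bound $F\uppar N(r,t)\le 1$ (since $|X\uppar N(t)|=N$), this yields $F\uppar N(r,t)\le \min\bigl(F^+(r,t),1\bigr)=C_1 F^+(r,t)$ for all $r\ge 0$ and $t\ge 0$, equivalently $X^+(t)\preceq X\uppar N(t)$. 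For the ``in particular'' statement, note that under the coupling the event $\{\sup_r(F\uppar N(r,t)-f(r))\ge 0\}$ is contained in $\{\sup_r(C_1 F^+(r,t)-f(r))\ge 0\}$, so the corresponding probabilities satisfy the stated inequality; since each side depends only on the marginal law of the relevant process, the inequality holds unconditionally.

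The one non-routine point, and the main obstacle, is verifying that the blue marginal genuinely has the $N$-BBM law rather than merely the right ``infinitesimal'' rates; the cleanest way is to observe that the blue configuration is a strong Markov process on $(\R^d)^N$ with the same generator as the $N$-BBM (e.g.\ check it on indicator-type cylinder functions of the empirical measure), and then invoke uniqueness of the martingale problem. The tie-breaking via independent uniform labels ensures that the ``furthest blue'' is always well-defined and that the recolouring rule is a measurable function of the augmented state.
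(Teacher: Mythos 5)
Your proposal is correct and uses essentially the same argument as the paper: the red/blue colouring coupling from Section~\ref{sec:notations}, under which the blue sub-system is an $N$-BBM contained in the full BBM, giving $X\uppar N(t)\subseteq X^+(t)$ and hence $F\uppar N\le C_1 F^+$. The extra care you take about tie-breaking and about verifying the blue marginal via the martingale problem is fine but goes beyond what the paper records, which simply asserts the coupling has the right marginals.
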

\begin{proof}
This is a direct property of the standard coupling between the $N$-BBM $X\uppar N(t)$ and the BBM  $X^+(t)$ as described in Section~\ref{sec:notations}. Observe that with $X\uppar N(0) = \mathcal{X} = X^+(0)$,  under the coupling described in Section~\ref{sec:notations}, for $t\ge 0$ we have $X\uppar N(t)\subseteq X^+(t)$. It follows that $F\uppar N(r,t)\leq F^+(r,t)$ for all $r>0$ and $t\geq 0$. Therefore, since $F\uppar N(r,t) \leq 1$ also holds for all $r > 0$ and $t \geq 0$, we have \[ F\uppar N(r,t) \leq C_1 F^+(r,t), \quad \quad r > 0, \;\; t \geq 0,
\]
which completes the proof.
\end{proof}

Recall the definition of the operator $G_t$ from~\eqref{eq:GCdef_probpaper}, and introduce
\begin{equation}\label{vl}
v^\ell(r,t) = e^t G_t v^\ell_0(r) 
\qquad\qquad\text{with}\qquad v^\ell_0(r) = F^+(r,0),
\end{equation}
the solution of \eqref{linear equ_probpaper} with
initial condition determined by the initial configuration $X^+(0)$ of the BBM.

\begin{lem} \label{lem:Ftilde+}
There exists $N_0 <\infty$ such that for all $N \geq N_0$, all $\mathcal X \in (\R^d)^m$ with $m \leq N$, and all $t >0$, \begin{align*} 
\sup_{r > 0} \, \P_{\mathcal X} \left( \left| F^{+}(r,t)-v^{\ell}(r,t) \right| \geq N^{-1/5}  \right) &\leq 13e^{4 t}N^{-6/5}
\end{align*}
and
\begin{align*}
\P_{\mathcal X} \left(  N^{-1}\big(|X^+(t)| - e^t |\mathcal{X}|\big) \geq N^{-1/5}\right) &\leq 13e^{4 t}N^{-6/5}.
\end{align*}
\end{lem}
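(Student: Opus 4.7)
The plan is to express both quantities as normalised sums of independent, centred random variables, one summand per initial particle, and apply Markov's inequality with the fourth moment. Using only the second moment via Chebyshev would give a bound of order $e^{2t}N^{-3/5}$, which is too weak for the stated $N^{-6/5}$; going to the fourth moment gives exactly what is needed since the sum of $m \leq N$ i.i.d.\ centred summands has fourth moment of order $m^2$ by independence.

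Since the descendants of distinct initial particles $\mathcal X_1,\dots,\mathcal X_m$ evolve as independent BBMs, I would write
\[
F^+(r,t) = \frac{1}{N}\sum_{i=1}^m Y_i(r,t),\qquad |X^+(t)| = \sum_{i=1}^m N^{(i)}_t,
\]
where $Y_i(r,t)$ counts the time-$t$ descendants of $\mathcal X_i$ lying in $\B(r)$ and $N^{(i)}_t$ is the total size of the $i$th subtree; both families are independent across $i$. The many-to-one identity gives $\E_{\mathcal X}[Y_i(r,t)] = e^t\,\P_{\mathcal X_i}(\|B_t\|<r) = e^t w(\|\mathcal X_i\|,r,t)$. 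Combining this with the linearity of $G_t$, the initial condition $v^\ell_0(r) = N^{-1}\sum_i \indic{\|\mathcal X_i\|<r}$, and the identity $e^tG_t(\indicnb{\{\cdot>y\}})(r) = e^t w(y,r,t)$ noted after~\eqref{linear equ_probpaper}, one obtains $\E_{\mathcal X}[F^+(r,t)] = v^\ell(r,t)$. Thus $F^+(r,t)-v^\ell(r,t) = N^{-1}\sum_i Z_i$ with $Z_i := Y_i(r,t) - e^tw(\|\mathcal X_i\|,r,t)$ independent and centred, and analogously $|X^+(t)| - e^t m = \sum_i (N^{(i)}_t - e^t)$.

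Each $N^{(i)}_t$ is a rate-$1$ Yule process started from one particle, hence has the geometric distribution with parameter $e^{-t}$; explicit computation of the moments gives $\E[(N^{(i)}_t)^k] \leq c_k e^{kt}$ for $k=1,2,3,4$, with $c_2 = 2$ and $c_4 = 24$. Since $0 \leq Y_i(r,t) \leq N^{(i)}_t$ and $|e^tw| \leq e^t$, the inequality $|Z_i|\leq N^{(i)}_t+e^t$ and $(a+b)^4 \leq 8(a^4+b^4)$ give $\E[Z_i^2] \leq C_2e^{2t}$ and $\E[Z_i^4] \leq C_4e^{4t}$ for absolute constants $C_2,C_4$. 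By independence and the vanishing of odd cross terms,
\[
\E_{\mathcal X}\Bigl[\Bigl(\textstyle\sum_{i=1}^m Z_i\Bigr)^{\!4}\Bigr]
= \sum_i \E[Z_i^4] + 3\sum_{i\neq j}\E[Z_i^2]\E[Z_j^2]
\leq C_4\, m\, e^{4t} + 3C_2^2\, m^2 e^{4t}
\leq C\, m^2 e^{4t},
\]
for some absolute constant $C$ and all $m\geq 1$. Dividing by $N^4$ and using $m \leq N$ gives $\E_{\mathcal X}[(F^+(r,t)-v^\ell(r,t))^4] \leq Ce^{4t}N^{-2}$, and Markov's inequality applied to the fourth power bounds the probability by $Ce^{4t}N^{-6/5}$ uniformly in $r$. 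The same argument applied to $\sum_i (N^{(i)}_t - e^t)$ yields the second estimate. Once constants are tracked, the resulting absolute constant can be made at most $13$ provided $N$ is large enough to absorb the lower-order term $C_4 m/N^4$, which explains the condition $N \geq N_0$. The only non-routine point is the need to reach the fourth moment and to have uniform-in-$t$ moment bounds on the Yule process; both are standard and none of the steps presents a serious obstacle.
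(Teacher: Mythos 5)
Your proposal is correct and takes essentially the same route as the paper: decompose $F^+(r,t)-v^\ell(r,t)$ into a normalised sum of independent centred contributions from the subtrees rooted at the initial particles, compute the fourth central moment using independence and the vanishing of odd cross terms, bound the Yule-process moments by $\E[n_i^4]\le 24e^{4t}$ and $\E[n_i^2]\le 2e^{2t}$, and finish with Markov's inequality. The only cosmetic difference is in bounding the fourth moment of each centred summand: you use $|Z_i|\le N^{(i)}_t+e^t$ and $(a+b)^4\le 8(a^4+b^4)$, while the paper exploits nonnegativity of both $n_i(r)$ and $\E[n_i(r)]$ to get the slightly sharper $(n_i(r)-\E[n_i(r)])^4\le n_i(r)^4+\E[n_i(r)]^4$, but this only affects the constant and both arguments close identically.
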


\begin{proof}
Recall that $(B_t)_{t\geq 0}$ is a $d$-dimensional Brownian motion with diffusivity $\sqrt 2$.  We claim that for $r> 0$ and $t\geq 0$, 
\begin{align}
v^{\ell}(r,t)= \frac 1N \sum_{i=1}^{| \mathcal{X}|}e^t \psub{\mathcal X_i}{\|B_t\|<
r}.
 \label{Fpmean}
\end{align}
Indeed, 
by the definition of $G_t$ in~\eqref{eq:GCdef_probpaper},
\begin{align*}
G_t F^{+}(r,0)
&=\frac 1N \int_{0}^\infty G(y,r,t)
\sum_{i=1}^{| \mathcal{X}|} \indic{\|\mathcal X_i \|< y}\, \diffd y\\
&=\frac 1N \sum_{i=1}^{| \mathcal{X}|} \int_{\|\mathcal X_i \|}^\infty 
G(y,r,t)\,
\diffd y\\
&=-\frac 1N \sum_{i=1}^{| \mathcal{X}|} \int_{\|\mathcal X_i \|}^\infty
\int_0^r \partial_y g(y,r',t)\,\diffd r'
\diffd y\\
&=\frac 1N \sum_{i=1}^{| \mathcal{X}|} \int_0^r
g(\|\mathcal X_i \|,r',t)\,\diffd r'
\\
&=\frac 1N \sum_{i=1}^{| \mathcal{X}|}\psub{\mathcal X_i}{\|B_t\|<
r}, 
\end{align*}
where the third line follows by~\eqref{eqG_probpaper} and~\eqref{eq:Grgy_probpaper}, and the last two lines follow since $g(y,\cdot,t)$ 
is the density at time $t$ of a $d$-dimensional Bessel process started at $y$. This proves the claim \eqref{Fpmean}.

In the BBM $X^+$ started from $\mathcal X$, for $i\in \{1,\ldots,  |\mathcal X|\}$, denote by $\mathcal N^{+,i}_t$ the family of particles at time $t$ descended from the $i$-th particle
in $\mathcal X$, i.e., recalling that particles are labelled according to
the Ulam-Harris scheme, let
\[
\mathcal N^{+,i}_t = \big\{u \in \set+ _t : u= (i,u_2,\ldots) \big\}.
\]
 Then letting $X^{+,i}(t)=(X^{+}_u(t))_{u\in \mathcal N^{+,i}_t}$, the processes  $X^{+,i}$ form a family of independent BBMs, and for
each $i$ the process $X^{+,i}$ is 
started from a single particle at location $\mathcal X_i$. 
Fix a time $t>0$
and write $n_i=\big|\mathcal N^{+,i}_t\big |$ for the number of particles descended from
$\mathcal X_i$ at time $t$, and 
\[ n_i(r)=\Big|\Big\{u \in \mathcal N^{+,i}_t : X^{+}_u(t)\in\B(r)\Big\}\Big| \]
for the number of particles at time $t$ descended from $\mathcal X_i$ which lie within
distance $r$ of the origin.
 Then $ F^+(r,t)=\frac1N\sum_{i=1}^{|\mathcal X|} n_i(r) $ and, 
 by the many-to-one lemma (see~\cite{HarrisRoberts17}) and~\eqref{Fpmean},
 \[ \frac1N\sum_{i=1}^{|\mathcal X|} \E_{\mathcal X} [n_i(r)] =\frac 1N \sum_{i=1}^{| \mathcal{X}|}e^t \psub{\mathcal X_i}{\|B_t\|<
r} =v^{\ell}(r,t) .
 \]
Therefore
\begin{align*}
&\E_{\mathcal X}  \left[  \left(F^+(r,t)-v^{\ell}(r,t) \right)^4  \right]
\\
&\qquad\qquad= \frac 1 {N^4}
\E_{\mathcal X}\bigg[
	\Big( \sum_{i=1}^{|\mathcal X|} \big( n_i(r) - \E_{\mathcal X}[n_i(r)] \big) \Big)^4
\bigg]\\
&\qquad\qquad= \frac 1 {N^4}
\Bigg(\sum_{i=1}^{|\mathcal X|} \E_{\mathcal X}\bigg[ \Big(n_i(r) - \E_{\mathcal X}[n_i(r)] \Big)^4 \bigg]
+6\sum_{\substack{i,j=1\\i<j}}^{|\mathcal X|}
\var_{\mathcal X}\big(n_i(r)\big)
\var_{\mathcal X}\big(n_j(r)\big)
\Bigg).
\end{align*}
Note that 
\begin{equation} \label{eq:hydroA1}
\E_{\mathcal X}\Big[ \big(n_i(r) - \E_{\mathcal X}[n_i(r)] \big)^4  \Big] \le \E_{\mathcal X}\Big[ n_i(r)^4 +\E_{\mathcal X}[n_i(r)]^4 \Big] \le 2\E_{\mathcal X}\Big[ n_i(r)^4\Big] \le 2\E_{\mathcal X}\Big[ n_i^4\Big] 
\end{equation}
and that 
\begin{equation} \label{eq:hydroA2}
\var_{\mathcal X}\big(n_i(r)\big) \le \E_{\mathcal X}\Big[ n_i(r)^2\Big] \le \E_{\mathcal X}\Big[ n_i^2\Big] .
\end{equation}
Furthermore, since $n_i$ has geometric distribution with parameter $e^{-t}$, 
\begin{align} 
\E_{\mathcal X}\big[ n_i^4 \big] \leq 24e^{4t} \label{eq:hydroA} \quad \text{and }\quad
\E_{\mathcal X}\big[ n_i ^2 \big]\leq 2e^{2t} .
\end{align}
We now have that, since $|\mathcal X|\le N$,
\begin{align*}
\E_{\mathcal X}  \left[  \left(F^+(r,t)-v^{\ell}(r,t) \right)^4  \right]
& \leq  \frac 1 {N^4}\left(  N  48e^{4 t} +3N^2 4e^{4t} \right)
\le (48 N^{-3} +12N^{-2}) e^{4t}.
\end{align*}
By the same argument,
since $|X^+(t)|=\sum_{i=1}^{|\mathcal X|}n_i$ and $\sum_{i=1}^{|\mathcal X|}\E_{\mathcal X}[n_i]=e^t |\mathcal X|$,
we also have
\[
\E_{\mathcal{X}} \left[ \left( N^{-1}(|X^+(t)| - e^t |\mathcal{X}|)\right)^{4}  \right] \leq  (48 N^{-3}+12N^{-2}) e^{4t}.
\]
By Markov's inequality, the stated results follow for $N$ sufficiently large that 
\begin{equation*}
N^{4/5}(48 N^{-3} +12N^{-2} )\leq 13 N^{-6/5}. \tag*{\qedhere} 
\end{equation*}
\end{proof}

Next, we upgrade the estimate in Lemma \ref{lem:Ftilde+} to be uniform in
$r$, at the expense of slightly slower decay in $N$. Recall the definition
of $v^\ell$ in \eqref{vl}.

\begin{lem}\label{lem:F+}
There exists $N_0 <\infty $ such that for all $N \geq N_0$, all $\mathcal X \in (\R^d)^m$ with $m \leq N$, and all  $t >0$ such that $e^t |\mathcal X|N^{-1}\geq 1-N^{-1/10}$,
\begin{align*} 
\P_{ \mathcal{X}}\left( \sup_{r \ge 0}  \left| C_1 F^{+}(r,t)-C_1v^{\ell}(r,t) \right| \geq 3 N^{-1/10} \right) &\leq 13 e^{4t}N^{-11/10}.
\end{align*}
\end{lem}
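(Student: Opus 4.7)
The plan is to upgrade the pointwise-in-$r$ estimate of Lemma~\ref{lem:Ftilde+} to a uniform one by discretising in $r$ on a grid adapted to the level sets of $v^\ell(\cdot,t)$, and then exploiting monotonicity. From~\eqref{Fpmean}, $r\mapsto v^\ell(r,t)$ is continuous and non-decreasing, with $v^\ell(0,t)=0$ and limit $\bar v:=e^t|\mathcal X|/N$ as $r\to\infty$; similarly $r\mapsto F^+(r,t)$ is non-decreasing. Writing $\bar v^*:=\min(\bar v,1)$, the hypothesis reads $\bar v^*\ge 1-N^{-1/10}$. I would set $K:=\lceil N^{1/10}/2\rceil$ and, for $j=0,1,\ldots,K-1$, define
\[
r_j:=\inf\big\{r\ge 0:v^\ell(r,t)\ge j\bar v^*/K\big\}.
\]
Since $(K-1)\bar v^*/K<\bar v$, each $r_j$ is finite and $v^\ell(r_j,t)=j\bar v^*/K$ by continuity, so consecutive values of $v^\ell$ along the grid differ by $\bar v^*/K\le 2N^{-1/10}$.

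Next I would apply Lemma~\ref{lem:Ftilde+} at each of the grid points $r_1,\ldots,r_{K-1}$ (noting that $F^+(r_0,t)=v^\ell(r_0,t)=0$ deterministically) and take a union bound: for $N$ large enough that $K-1\le N^{1/10}$,
\[
\P_{\mathcal X}\Big(\max_{1\le j\le K-1}\big|F^+(r_j,t)-v^\ell(r_j,t)\big|\ge N^{-1/5}\Big)\le (K-1)\cdot 13e^{4t}N^{-6/5}\le 13e^{4t}N^{-11/10}.
\]

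Finally, on the complementary event I would transfer this to a uniform bound using the monotonicity of $C_1 F^+(\cdot,t)$ and $C_1 v^\ell(\cdot,t)$ together with the contraction $|C_1 a-C_1 b|\le|a-b|$. For $r\in[r_j,r_{j+1}]$ with $j\le K-2$, the standard grid argument yields
\[
\big|C_1F^+(r,t)-C_1v^\ell(r,t)\big|\le N^{-1/5}+\bar v^*/K\le 3N^{-1/10}.
\]
The main obstacle is the tail region $r\ge r_{K-1}$, where the grid cannot be extended to a finite point capturing the asymptotic value $\bar v$ of $v^\ell(\cdot,t)$. This is precisely what the hypothesis $\bar v\ge 1-N^{-1/10}$ is designed for: it forces
\[
v^\ell(r_{K-1},t)=\tfrac{K-1}{K}\bar v^*\ge (1-2N^{-1/10})(1-N^{-1/10})\ge 1-3N^{-1/10},
\]
so that in the tail, on the good event, one has both
\[
C_1F^+(r,t)-C_1v^\ell(r,t)\le 1-v^\ell(r_{K-1},t)\le 3N^{-1/10}
\]
(using only $C_1F^+\le 1$) and
\[
C_1v^\ell(r,t)-C_1F^+(r,t)\le \bar v^*-C_1F^+(r_{K-1},t)\le \bar v^*/K+N^{-1/5}\le 3N^{-1/10}.
\]
Taking the supremum over $r$ gives the claim, and notably no appeal to the second part of Lemma~\ref{lem:Ftilde+} is needed — the capping at $1$ combined with the hypothesis absorbs the contribution from the unbounded tail.
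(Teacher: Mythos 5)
Your proof is correct and follows essentially the same strategy as the paper: discretise $r$ along a grid of roughly $N^{1/10}$ quantile levels of $v^\ell(\cdot,t)$, apply the pointwise estimate of Lemma~\ref{lem:Ftilde+} at the grid points with a union bound, and upgrade to a uniform bound via monotonicity of $F^+$, $v^\ell$ and the $1$-Lipschitz property of $C_1$. The only difference is cosmetic — you scale the grid levels by $\bar v^*$ so that every $r_j$ is automatically finite, whereas the paper uses absolute levels $k/\lfloor N^{1/10}\rfloor$ and truncates the grid at $k=\lfloor N^{1/10}\rfloor-2$; both proofs treat the tail region beyond the last grid point by the same argument, and neither needs the second assertion of Lemma~\ref{lem:Ftilde+}.
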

\begin{proof}
Fix $t>0$ such that $e^t |\mathcal X|N^{-1}\geq 1-N^{-1/10}$. For $k \in \N_0$ with $k 
\leq \lfloor N^{1/10}\rfloor $, let 
$$
r_k = \inf\left\{y\geq 0: v^{\ell}(y,t) \geq \frac{k}{\lfloor N^{1/10} \rfloor} 
\right\}.
$$
Recall the formula for $v^\ell (\cdot,t)$ in~\eqref{Fpmean} in the proof of Lemma~\ref{lem:Ftilde+}.
Note that $F^+(\cdot,t)$ and $v^{\ell}(\cdot,t)$ are non-decreasing, that 
$v^{\ell}(\cdot,t)$ is continuous and that 
$(r_k)$ for $k=0,1,\ldots ,\lfloor N^{1/10}\rfloor$
is an increasing sequence with $r_0=0$.
As $r\to \infty$,
$v^{\ell}(r,t) \to e^t N^{-1}|\mathcal X|\geq 1-N^{-1/10}$, so 
$r_k <\infty$ for $k \leq \lfloor N^{1/10} \rfloor -2$.

Suppose that for every $k \in \{1,\ldots,\lfloor N^{1/10}\rfloor -2\}$,
\begin{equation} \label{eq:hydro(*)}
\left|F^+(r_k,t)-v^{\ell}(r_k,t)\right|\leq N^{-1/5}.
\end{equation}
Then, for $r\geq 0$, if $r\in [r_k,r_{k+1}]$ for some $k \in 
\big\{0,\ldots,\lfloor N^{1/10}\rfloor -3 \big\}$
we have 
$$
\frac{k}{\lfloor N^{1/10}\rfloor}-N^{-1/5}
\leq F^{+}(r_k,t)
\leq  F^{+}(r,t)
\leq  F^{+}(r_{k+1},t)
\leq N^{-1/5}+\frac{k+1}{\lfloor N^{1/10}\rfloor}
$$
and
$$
v^{\ell}(r,t) \in 
\left[\frac{k}{\lfloor N^{1/10}\rfloor},\frac{k +1}{\lfloor 
N^{1/10}\rfloor} \right].
$$
If $N$ is large enough that $\frac{\lfloor N^{1/10}\rfloor -2}{\lfloor 
N^{1/10}\rfloor}+N^{-1/5}\leq 1$, then for all $r \le r_{\lfloor N^{1/10} \rfloor -2}$ we have
$C_1 F^{+}(r,t)= F^{+}(r,t)$ and 
$C_1 v^{\ell}(r,t) = v^{\ell}(r,t)$.
Thus,
\begin{align*}
\left|C_1 F^{+}(r,t)-C_1 v^{\ell}(r,t)\right|
&\leq N^{-1/5}+\big(\lfloor N^{1/10}\rfloor\big)^{-1}\\
&< 3N^{-1/10}
\end{align*}
for $N$ sufficiently large.

If instead $r\geq r_k$ where $k =\lfloor N^{1/10}\rfloor -2$,
then
$$
v^{\ell}(r,t)
\geq \frac k {\lfloor N^{1/10}\rfloor}
\geq 1-2\big(\lfloor N^{1/10}\rfloor\big )^{-1},
$$
and by~\eqref{eq:hydro(*)},
$$
F^{+}(r,t)\geq F^{+}(r_k,t)\geq 1-N^{-1/5}-2\big(\lfloor N^{1/10}\rfloor 
\big)^{-1}.
$$
Hence
\begin{align*}
\big|C_1 F^{+}(r,t)-C_1 v^{\ell}(r,t)\big|
&< 3 N^{-1/10}
\end{align*}
for $N$ sufficiently large.
So for $N$ sufficiently large, if~\eqref{eq:hydro(*)} holds for each $k \in 
\big\{1,\ldots, \lfloor N^{1/10} \rfloor-2\big\}$  then
$$
\sup_{r\geq 0}\big|C_1 F^{+}(r,t)-C_1v^{\ell}(r,t)\big|
< 3N^{-1/10}.
$$
Now by a union bound and Lemma~\ref{lem:Ftilde+}, for $N$ sufficiently large,
\begin{align*}
&\P_{\mathcal X}\bigg(\sup_{r \geq 0}\big|C_1 F^{+}(r,t)-C_1
v^{\ell}(r,t)\big|
\geq 3N^{-1/10} \bigg)\\
&\leq \psub{\mathcal X}{\exists k \in \big\{1,\ldots, \lfloor
N^{1/10}\rfloor-2\big\}:\big|F^{+}(r_k,t)-v^{\ell}(r_k,t)\big|
\geq N^{-1/5}}\\
&\leq N^{1/10}\cdot 13 e^{4t}N^{-6/5}\\
&= 13 e^{4t}N^{-11/10},
\end{align*}
which completes the proof.
\end{proof}

\begin{corr}\label{corr:F+}
There exists $N_0 <\infty$ such that for all $N \geq N_0$, for $\mathcal X \in (\R^d)^N$ and $t >0$, 
\begin{align*} 
\P_{ \mathcal{X} } \left(  \sup_{r \ge 0}  \left( F\uppar N(r,t)-C_1 e^t G_t F\uppar N(r,0)\right)  \geq 3 N^{-1/10} \right) &\leq 13 e^{4t}N^{-11/10}.
\end{align*}
\end{corr}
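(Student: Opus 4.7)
The plan is to combine Proposition~\ref{prop:FNdom} and Lemma~\ref{lem:F+} directly. Apply Proposition~\ref{prop:FNdom} with initial condition $\mathcal X \in (\R^d)^N$ to obtain a coupling of the $N$-BBM $X\uppar N$ and the BBM $X^+$, both started from $\mathcal X$, such that
\[
F\uppar N(r,t) \le C_1 F^+(r,t) \quad \text{for all } r \geq 0, \; t \geq 0.
\]
Since under this coupling $F^+(r,0) = F\uppar N(r,0)$, the function $v^\ell$ from~\eqref{vl} satisfies $v^\ell(r,t) = e^t G_t F^+(\cdot,0)(r) = e^t G_t F\uppar N(\cdot,0)(r)$.

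Next, I would verify that Lemma~\ref{lem:F+} applies. Since $|\mathcal X| = N$, we have $e^t |\mathcal X| N^{-1} = e^t \geq 1 \geq 1 - N^{-1/10}$ for all $t > 0$, so the hypothesis of Lemma~\ref{lem:F+} is satisfied. Since $C_1$ is order-preserving, the pointwise inequality $F\uppar N(r,t) \leq C_1 F^+(r,t)$ combined with $F\uppar N(r,t) \leq 1$ gives
\[
F\uppar N(r,t) - C_1 v^\ell(r,t) \leq C_1 F^+(r,t) - C_1 v^\ell(r,t) \leq \bigl| C_1 F^+(r,t) - C_1 v^\ell(r,t) \bigr|
\]
for every $r \geq 0$. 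Taking the supremum over $r$ yields
\[
\sup_{r \geq 0} \bigl( F\uppar N(r,t) - C_1 e^t G_t F\uppar N(\cdot,0)(r) \bigr) \leq \sup_{r \geq 0} \bigl| C_1 F^+(r,t) - C_1 v^\ell(r,t) \bigr|.
\]

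Finally, applying Lemma~\ref{lem:F+} to the right-hand side bounds the probability that it exceeds $3 N^{-1/10}$ by $13 e^{4t} N^{-11/10}$, provided $N$ is at least the $N_0$ from that lemma, which gives the claimed estimate. The argument involves no genuine obstacle; both ingredients have already been assembled, and the corollary is essentially a transcription of Lemma~\ref{lem:F+} to the $N$-BBM via the monotone coupling.
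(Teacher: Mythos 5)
Your proof is correct and follows essentially the same route as the paper, which simply cites Proposition~\ref{prop:FNdom}, Lemma~\ref{lem:F+}, and the definition~\eqref{vl} of $v^\ell$; you have merely written out the details (verifying the hypothesis $e^t|\mathcal X|N^{-1}\ge 1-N^{-1/10}$ and passing from $F\uppar N\le C_1 F^+$ to the one-sided bound) that the paper leaves implicit.
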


\begin{proof}
This follows immediately from Lemma \ref{lem:F+} and Proposition \ref{prop:FNdom}, and the definition of $v^\ell$ in~\eqref{vl}.
\end{proof}

As in~\eqref{eq:defv+v-_probpaper}, for $\delta>0$ and $k\in \N_0$, let
\[
v^{k,\delta,+}= (C_1 e^\delta G_\delta)^k  F\uppar N(\cdot,0).
\]
We now apply Corollary~\ref{corr:F+} repeatedly on successive timesteps to prove the following result.

\begin{prop} \label{prop:FN}
There exists $N_0 <\infty$ such that for all $N \geq N_0, \delta>0$, $K\in \N$ and
$\mathcal{X} \in (\R^d)^N$,
$$
\P_{\mathcal X}\left( \sup_{r\geq 0} \left(F\uppar N(r,K\delta)-v^{K,\delta,+}(r)\right) \geq 3K e^{K\delta}N^{-1/10} \right)
\leq 13K e^{4\delta}N^{-11/10}.
$$
\end{prop}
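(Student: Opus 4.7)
The plan is to iterate Corollary~\ref{corr:F+} across the $K$ successive time-steps of length $\delta$, using the Markov property of the $N$-BBM to reset the initial condition at each step, and then union-bound the failure probabilities. At each time $k\delta$ the $N$-BBM has exactly $N$ particles, so Corollary~\ref{corr:F+} can indeed be applied afresh conditionally on $\mathcal F_{k\delta}$.

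Concretely, for $k=1,\ldots,K$ define the event
\[
A_k = \Big\{\sup_{r\geq 0}\big(F\uppar N(r,k\delta) - C_1 e^\delta G_\delta F\uppar N(r,(k-1)\delta)\big) \geq 3N^{-1/10}\Big\}.
\]
By the Markov property applied at time $(k-1)\delta$ together with Corollary~\ref{corr:F+} (with elapsed time $\delta$), $\P_{\mathcal X}(A_k \mid \mathcal F_{(k-1)\delta}) \leq 13 e^{4\delta}N^{-11/10}$ almost surely, so a union bound yields $\P_{\mathcal X}(\bigcup_{k=1}^K A_k) \leq 13K e^{4\delta}N^{-11/10}$, which is exactly the claimed probability bound.

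It then remains to show that on the complementary event the deterministic pointwise bound holds. Set $\epsilon_k := \sup_{r\geq 0}\big(F\uppar N(r,k\delta) - v^{k,\delta,+}(r)\big)_+$, so $\epsilon_0 = 0$. The key deterministic ingredient is that $C_1 e^\delta G_\delta$ is sub-additive with respect to adding a non-negative constant: since $G_\delta$ is order-preserving and $\|G_\delta c\|_{L^\infty} \leq c$ for every constant $c\geq 0$ (by \eqref{eq:Gcbounds_probpaper} with $g=0$), and since $C_1(a+b)\leq C_1(a)+b$ for all $a\in\R$ and $b\geq 0$, one has
\[
C_1 e^\delta G_\delta(f+c) \leq C_1 e^\delta G_\delta f + e^\delta c
\]
for every measurable $f$ and $c\geq 0$. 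Applying this with $f = v^{k-1,\delta,+}$ and $c = \epsilon_{k-1}$, on $A_k^c$ we deduce
\[
F\uppar N(\cdot,k\delta)\leq C_1 e^\delta G_\delta F\uppar N(\cdot,(k-1)\delta) + 3N^{-1/10} \leq v^{k,\delta,+} + e^\delta \epsilon_{k-1} + 3N^{-1/10},
\]
so $\epsilon_k \leq e^\delta \epsilon_{k-1} + 3N^{-1/10}$. Iterating from $\epsilon_0 = 0$ yields
\[
\epsilon_K \leq 3N^{-1/10}\sum_{j=0}^{K-1}e^{j\delta} \leq 3Ke^{K\delta}N^{-1/10},
\]
which is exactly the pointwise bound asserted.

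The proof is largely mechanical; the only real point to check is the sub-additivity of the operator $C_1 e^\delta G_\delta$ displayed above, which is what allows the additive one-step error $3N^{-1/10}$ to accumulate into a geometric sum in $e^{j\delta}$ rather than blowing up. Everything else is the Markov property plus a union bound.
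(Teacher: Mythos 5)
Your proposal is correct and is essentially the paper's own argument: define the per-step failure events, bound their union via the Markov property and Corollary~\ref{corr:F+}, and on the complement iterate a deterministic one-step estimate for the operator $C_1 e^\delta G_\delta$. The only cosmetic difference is that you phrase the key operator property as sub-additivity under adding a constant, whereas the paper states it as a Lipschitz-type bound $C_1 e^\delta G_\delta f - C_1 e^\delta G_\delta g \le e^\delta \max\big(0,\sup_r(f(r)-g(r))\big)$; these are equivalent in this setting.
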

\begin{proof}
For $k\in \{1,\ldots,K\}$ and $r\geq 0$, we can write
\begin{align} \notag
F\uppar N(r,k\delta)-v^{k,\delta,+}(r)
&= F\uppar N(r,k\delta)-C_1 e^\delta G_\delta 
F\uppar N\big(r,(k-1)\delta\big) \\  \label{kdeltadiff}
&\quad + C_1 e^\delta G_\delta 
F\uppar N\big(r,(k-1)\delta\big) \\ \notag
& \quad - C_1 e^\delta G_\delta v^{k-1,\delta,+}(r).
\end{align}
To control \eqref{kdeltadiff}, we define the events
$$
E_k:=\bigg\{ \sup_{r\geq 0} \left( F\uppar N(r,k\delta)-C_1 e^\delta G_\delta 
F\uppar N\big(r,(k-1)\delta\big)\right)
< 3N^{-1/10}\bigg\}
$$
and
\[
E_* = \bigcap_{k=1}^K E_k.
\]
Then on the event $E_*$, for each $k\in \{1,\ldots, K\}$ we have by~\eqref{kdeltadiff} that for $r\ge 0$,
\begin{align*}
F\uppar N(r,k\delta)-v^{k,\delta,+}(r)
& < 3N^{-1/10} + C_1 e^\delta G_\delta 
F\uppar N(r,(k-1)\delta) 
 - C_1 e^\delta G_\delta v^{k-1,\delta,+}(r).
\end{align*}
Note that since $G\ge0$ and $\int_0^\infty  G(y,r,t)\,\diffd y\le1$
by~\eqref{wdef_probpaper} and~\eqref{Gdef_probpaper}, we have that
$G_\delta f-G_\delta g\le \max \big(0,\sup_{r\ge 0}(f(r)-g(r)\big)$
 for any $f,g:[0,\infty)\to \R$.
Moreover,
 $C_1 f(r) -C_1 g(r)\le 
\max \big(0,f(r)-g(r)\big)$ for any $f,g:[0,\infty)\to \R$. 
Therefore for $r\ge 0$,
\[
C_1 e^\delta G_\delta  f(r)-C_1 e^\delta G_\delta  g(r)\le e^\delta \max\Big(0, \, \sup_{r\ge0} \, (f(r)-g(r))\Big).
\]
It follows that
\begin{align*}
F\uppar N(r,k\delta)-v^{k,\delta,+}(r) &< 3N^{-1/10} 
+e^\delta \max\left( 0,\,  \,\sup_{y\geq 
0}\left(F\uppar N(y,(k-1)\delta)-v^{k-1,\delta,+}(y)\right)\right)
\end{align*}
also holds on the event $E_*$. By iterating this argument, it follows that for $k\in 
\{1,\ldots,K\}$,
$$
\sup_{r\geq 0} \left(F\uppar N(r,k\delta)-v^{k,\delta,+}(r)\right)
< 3k e^{k\delta}N^{-1/10}
$$
holds on $E_*$.  

To estimate $\P_{\mathcal X}(E_*^c)$, we use a union bound and Corollary~\ref{corr:F+} with $t = \delta$.  Specifically,
\begin{align*}
\P_{\mathcal X}(E_*^c) =  \psub{\mathcal X}{\,\bigcup_{k=1}^K E_k^c  } \leq \sum_{k=1}^K \P_{\mathcal X}(E_k^c).
\end{align*}
By the Markov property, for $k\in \{1,\ldots, K\}$,
\begin{align*}
\P_{\mathcal X}(E_k^c )  = \E_{\mathcal X} \Big[ \P_{\mathcal X}\big( E_k^c
\;\big| \;\mathcal{F}_{(k-1)\delta}\big)\Big]  =  \E_{\mathcal X} \Big[ H\Big(
X\uppar N\big((k-1)\delta\big)\Big) \Big]
\end{align*}
where $H:(\R^d)^N \to \R$ is defined by
\[
H(\mathcal{X}') =  \P_{ \mathcal{X}'} \left( \sup_{r \geq 0} \left(F\uppar
N(r,\delta) - C_1 e^\delta G_\delta F\uppar N(r,0)\right) \ge 3  N^{-1/10} \right).
\]
Therefore, by Corollary~\ref{corr:F+}, for $N \geq N_0$,
\[
\P_{\mathcal X}(E_*^c) \leq \sum_{k=1}^K 13 e^{4 \delta} N^{-11/10} = 13K e^{4\delta}N^{-11/10}.
\]
The result follows.
\end{proof}

\subsubsection{Lower bound for the proof of Proposition~\ref{prop:NBBM1d}}

We begin by proving that under a suitable coupling, the random function $F\uppar N$ for the $N$-BBM stochastically dominates the random function $F^+$ for the BBM with an initial condition consisting of less than $N$ particles. This result is very similar to the lower bound in Theorem~5.1 of~\cite{DMFPSL_EJP}.

Recall from our definition of the $\preceq$ notation
in~\eqref{eq:preceqdef} in Section~\ref{sec:notations} that
 $$X\uppar N(t)\preceq X^+(t)\qquad\Leftrightarrow\qquad
   F\uppar N(\cdot,t) \ge F^+(\cdot,t).$$
Moreover, the relation $\mathcal{X} \preceq \mathcal{X}^+$ is not affected by the ordering of the points in the vectors $\mathcal{X}$ and $\mathcal{X}^+$.
\begin{prop} \label{prop:lowercouple}
Suppose $\mathcal{X} = \big( \mathcal X_1 ,\dots, \mathcal X_N \big)\in (\R^d)^N$ and $\mathcal{X}^+ = ( \mathcal X_1^+,\dots, \mathcal X_m^+)\in (\R^d)^m$ with $m \leq N$ and such that $\mathcal X\preceq\mathcal X^+$.
There exists a coupling of the $N$-BBM $X\uppar N(t)$ started from $\mathcal X$ and of the
BBM $X^+(t)$ started from $\mathcal X^+$ such that, for $t\ge0$,
\[
X\uppar N(t) \preceq
X^+(t)\qquad\text{if}\quad\big| X^+(t)\big| \le N.
\]
In particular, under that coupling, $F\uppar N(\cdot,t)\ge F^+(\cdot,t)$ if $\big| X^+(t)\big|
\le N$. Then if $f:[0,\infty)\to\R$ is measurable, for $t\ge0$,
\begin{align}
&\P_{ \mathcal{X} } \bigg(  \inf_{r \geq 0} \Big(F\uppar N(r,t) - f(r)\Big) > 0  \bigg) \geq \P_{ \mathcal{X}^+ }  \bigg( \inf_{r \geq 0}\Big( F^{+}(r,t) - f(r)\Big) > 0, \quad |X^+(t)| \leq N \bigg). \label{lowercouple1}
\end{align}
\end{prop}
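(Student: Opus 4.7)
The plan is to build an explicit coupling on a single probability space that carries both $X^+$ started from $\mathcal X^+$ and $X\uppar N$ started from $\mathcal X$, together with a (time-varying) injection $\psi_t:\set+_t\to \setN_t$ satisfying $\|X\uppar N_{\psi_t(u)}(t)\|\le \|X^+_u(t)\|$ for every $u\in \set+_t$, as long as $|\set+_t|\le N$. The existence of such a $\psi_t$ is equivalent to $X\uppar N(t)\preceq X^+(t)$: sorting both vectors by increasing norm, an injection with the required norm property exists iff the $k$-th smallest norm in $X\uppar N(t)$ is no larger than the $k$-th in $X^+(t)$ for every $k\le |\set+_t|$, which is just a rephrasing of $\preceq$ (equivalently, Hall's marriage condition applied to the balls $\B(r)$). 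So the first claim of the proposition reduces to constructing $\psi_t$ and propagating it in time.

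For the initial matching, since $\mathcal X\preceq\mathcal X^+$ we have $\|\mathcal X_{(k)}\|\le \|\mathcal X^+_{(k)}\|$ for $k\le m$, where $(k)$ denotes the $k$-th smallest norm, so I take $\psi_0$ to pair these sorted particles. I then construct a BBM $\tilde X$ starting from $\mathcal X$ via the Ulam--Harris scheme and derive $X\uppar N$ from it by the blue/red coupling of Section~\ref{sec:notations}. The coupling with $X^+$ is built as follows: for each matched pair of initial particles, the two members share a single Poisson branching clock, and their $d$-dimensional driving Brownian motions are coupled radially-synchronously, so that the two norms evolve as Bessel processes driven by a common one-dimensional Brownian motion, with independent angular parts. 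Since two Bessel processes of the same dimension driven by the same noise preserve their initial ordering, the inequality $\|X\uppar N_{\psi_t(u)}(t)\|\le\|X^+_u(t)\|$ is maintained for each matched pair between branching events. Unmatched initial particles of $\tilde X$, and particles not descended from a matched initial pair, use independent Brownian motions and Poisson clocks.

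It remains to update $\psi_t$ at each branching event, which I do inductively. At a matched branching, the particle $u\in\set+_{\tau-}$ and its partner $\tilde u=\psi_{\tau-}(u)$ both split into two children at their respective positions (same noises, same clock). I extend $\psi$ by pairing the children: $\psi_\tau(u1)=\tilde u1$ and $\psi_\tau(u2)=\tilde u2$; the norm inequality is automatic because each child inherits its parent's position. The $N$-BBM selection rule then removes the furthest blue particle $\tilde v$ in $\tilde X$. If $\tilde v$ is not in the image of $\psi_\tau$, nothing needs to change; otherwise, letting $u'=\psi_\tau^{-1}(\tilde v)$, I re-match $u'$ to any currently unmatched blue particle. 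This is always possible: on one hand, $\tilde v$ was the furthest blue particle, so every other blue particle has norm at most $\|\tilde v\|\le \|X^+_{u'}(\tau)\|$ by the inductive hypothesis, which validates the norm inequality for the new match; on the other hand, $|\set+_\tau|\le N=|\setN_\tau|$ guarantees an unmatched blue particle exists. Unmatched branchings in $\tilde X$ (with no counterpart in $X^+$) are handled by exactly the same re-matching argument.

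The main obstacle is setting up the radial Brownian coupling coherently with the branching tree structure: one wants matched particles to move so that their norms remain ordered, while allowing independent angular parts and independent evolution of unmatched particles, all in a way that is measurable and consistent across branching. Using the radially-synchronous Bessel coupling (a standard one-dimensional synchronous coupling of Bessel SDEs lifted to $d$-dimensional Brownian motions by multiplying in an independent angular component) resolves this. Granted the coupling, the probability inequality~\eqref{lowercouple1} is immediate: on the event $\{|X^+(t)|\le N\}\cap\{\inf_r(F^+(r,t)-f(r))>0\}$ one has $X\uppar N(t)\preceq X^+(t)$, hence $F\uppar N(r,t)\ge F^+(r,t)$ for every $r\ge 0$, and therefore $\inf_r(F\uppar N(r,t)-f(r))\ge \inf_r(F^+(r,t)-f(r))>0$.
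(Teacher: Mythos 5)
Your approach is conceptually aligned with the paper's --- couple particle norms so that each $X^+$ particle dominates a matched particle in the $N$-BBM, and show this ordering survives branching events --- but the implementation is genuinely different and has a real technical gap that the paper's route is designed to avoid. You build everything on top of the blue/red coupling: an auxiliary BBM $\tilde X$ from $\mathcal X$, from which $X\uppar N$ is read off, plus a dynamic injection $\psi_t:\set+_t\to\setN_t$ that you re-wire at every blue branching. The paper instead constructs $X\uppar N$ directly from a rate-$N$ Poisson clock with uniform thinning to decide which branchings are shared with $X^+$, keeps a \emph{static} index matching $k\leftrightarrow k$, $k\le m$, between consecutive $X^+$ branching events, and verifies that at each $N$-BBM branching the norm sitting at any matched index $k\le m$ can only decrease (because the particle with maximal norm receives the jumping particle's smaller position); at each $X^+$ branching it simply restarts the whole construction via the strong Markov property, with the configurations re-sorted. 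That iterative restart is what lets the paper sidestep the issue your construction runs into: when you re-match $u'$ from the removed blue $\tilde v$ to a fresh unmatched blue $\tilde w$, the one-dimensional noise driving $u'$'s radial Bessel process must switch at a stopping time from $\tilde v$'s driver to $\tilde w$'s. You assert this is ``resolved'' by the radially-synchronous coupling, but preserving the marginal law of $X^+$ as a genuine BBM under such switchings (both for the radial noise and for the shared Poisson clock) is precisely the delicate part and is left unargued; it is doable via the strong Markov property (the glued driver is a Brownian motion, the residual exponential clock has the right law), but it is a non-trivial point, and for $d=1$ the ``norms driven by a common one-dimensional BM'' coupling must in fact be the local-time/Tanaka version rather than the naive one, since two one-dimensional BMs with a common additive noise do not have ordered absolute values.

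A second, smaller remark: your re-matching argument is where your approach has its own nice insight --- that any surviving blue has norm $\le\|\tilde v\|\le\|X^+_{u'}(\tau)\|$, so an arbitrary unmatched blue works as the new partner --- but you should make the matching rule deterministic (e.g.\ lowest Ulam--Harris label) to keep the construction measurable, and you need to spell out that the counting $|\set+_\tau|\le N$ really does leave an unmatched blue available after the momentary jump to $N+1$ blues. Your concluding derivation of \eqref{lowercouple1} from the domination is correct. Overall the idea is sound and the re-matching lemma is valid, but as written the construction of the coupling itself is sketched rather than proved; the paper's restart-at-each-$X^+$-branching structure is the cleaner and more self-contained way to make the same idea rigorous.
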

\begin{proof}
The coupling of the processes $X\uppar N$ and $X^+$ is similar in spirit to
the lower bound in Section~5.4 of~\cite{DMFPSL_EJP}. 

Let $\tau^+_\ell$ for
$\ell\in\N$ denote the successive branch times of the BBM process:
\[
\tau^+_\ell = \inf \big\{ t \geq 0\;:\; |X^+(t)| = m+\ell \big\}.
\]
%(with $|X^+(\tau_\ell)|= m+\ell$ by definition of the process).
By induction on $|\mathcal X^+|$, we claim that it is sufficient to find a coupling  of the processes
$X\uppar N$ and $X^+$  on the same probability space with $X\uppar N(0)=\mathcal X$ and $X^+(0)=\mathcal
X^+$ such  that
\begin{equation}
{X}\uppar N(t) \preceq {X}^+(t), \quad  \forall \;\; t \in  \left \{ \begin{array}{ll}  \text{$[0,\tau^+_1]$}, & \quad \text{if} \;\; m < N  \\ \text{$[0,\tau^+_1)$}, & \quad \text{if} \;\; m = N \end{array}  \right.
 \label{compclaim1}
\end{equation}
holds almost surely. 
Indeed, assume that \eqref{compclaim1} holds. If $m=N$,
then the proposition is proved. If $m<N$, then $X\uppar N(\tau_1^+)\preceq
X^+(\tau_1^+)$, and the construction of the coupling can be repeated up to
time $\tau_2^+$ using $X\uppar N(\tau_1^+)$ and $X^+(\tau_1^+)$ as the new
initial particle configurations, by the strong Markov property. By induction, the property $X\uppar N(t)\preceq X^+(t)$ holds for
$t\in[0,\tau^+_{N-m+1})$, where $\tau^+_{N-m+1}$ is the first time at which there are $N+1$ particles in the
BBM $X^+$, and the proposition is proved.

We now show that \eqref{compclaim1} holds.
Set $\tau_0 = 0$, and let $(\tau_i)_{i=1}^\infty$ be the arrival times in
a Poisson process with rate $N$, so that $(\tau_{i+1} - \tau_i)_{i \geq 0}$
is a family of independent $\text{Exp}(N)$ random variables. These $\tau_i$
for $i\ge1$ will define the branch times for the $N$-BBM process $X\uppar N$. The coupling will ensure that $\tau_1^+ =\tau_p $ for some $p\in\N$, where we recall that $\tau_1^+$ is the time of the first branching event in the BBM $X^+$.

We now construct the motion of the particles for $t\in(0,\tau_1)$.  Given $x, x^+ \in \R^d$ with $\|x\| \leq \|x^+\|$, we say that $(B, B^+)$ are a pair of spherically-ordered Brownian motions starting from $(x,x^+)$ if $B$ and $B^+$ are Brownian motions in $\R^d$ (with diffusivity $\sqrt{2}$), starting from $B_0 = x$ and $B ^+_0 = x^+$ and such that, with probability one, $\|B_t\| \leq \|B^+_t\|$ holds for all $t \geq 0$. There are multiple ways to construct such a pair. For example, $B$ and $B^+$ might evolve as independent Brownian motions in $\R^d$ up to the first time $ T$ at which $\|B_T\| = \|B^+_T\|$; after that time they are coupled in such a way that $\|B_t\| = \|B^+_t\|$
for all $t \geq  T$, for example by taking $B_t - B_T = \Theta (B^+_t - B^+_T)$ with $\Theta:\R^d \to \R^d$ being an orthogonal transformation such that $\Theta B^+_T= B_T$.  Alternatively, one could use the skew-product decomposition of Brownian motion (see, for example, the proof of Proposition~2.10 in \cite{BZ18}), driving the radial components of $B$ and $B^+$ by the same Bessel process.

Since the condition $\mathcal{X} \preceq \mathcal{X}^+$ is invariant under
permutation of the indices of points in $\mathcal{X}$ and $\mathcal{X}^+$,
it suffices to assume that the vectors $\mathcal{X}$ and $\mathcal{X}^+$ are ordered in such a way that
\[
\|\mathcal X_k \| \leq \|\mathcal X_k^+\|, \quad \forall \; k \in \{ 1,\dots,m\} ,
\]
where we recall that $m=|\mathcal{X}^+| \leq N$.
(For example, order the vectors $\mathcal X$ and $\mathcal X^+$ so that the points with the lowest indices are the points closest to the origin.)

Then, for $t \in (0,\tau_1)$, we define $(X\uppar N_k(t),X^+_k(t))$ for $k \in \{1,\dots,m\}$ to be $m$ independent pairs of spherically-ordered Brownian motions starting from $(\mathcal X_k,\mathcal X^+_k)$, and for $k \in \{ m+1,\dots,N\}$, the $X\uppar N_k(t)$ are defined to be independent Brownian motions starting from $\mathcal X_k$.
Hence, for $t\in(0,\tau_1)$, 
we have $\big\|X_k\uppar N(t)\big\| \le\big\|X_k^+(t)\big\|$ for $k\in\{1,\ldots,m\}$, which in turn implies that 
$$X\uppar N(t) \preceq X^+(t)\qquad \forall \; t<\tau_1.$$
We now describe the first branching event at time $\tau_1$.

Defining $X\uppar N(\tau_1-)=\big(X\uppar N_1(\tau_1-),\ldots,X\uppar
N_N(\tau_1-)\big)=\lim_{t\nearrow\tau_1} X\uppar N(t)$,
and similarly defining $X^+(\tau_1-)=\lim_{t\nearrow\tau_1} X^+(t)$, we have
\[
\|X\uppar N_k(\tau_1-)\| \leq \|X^{+}_k(\tau_1-)\|, \quad  \forall \; k \in \{ 1,\dots,m\} .
\]
Let $j_1$, a random variable uniformly distributed on $\{1,\dots,N\}$, be the index of the branching particle in the $N$-BBM at time $\tau_1$.
Let $k_1$ denote the index of the particle in $X\uppar N(\tau_1-)$ with maximal distance from the origin.
If $j_1>m$, then at time $\tau_1$, the $N$-BBM branches but the BBM does not. The particle in $X\uppar N$ with index $j_1$ is duplicated; the particle in $X\uppar N$ with index $k_1$ is eliminated. 
More precisely, let
\[X\uppar N_k(\tau_1)=X\uppar N_k(\tau_1-) \text{ for }k\neq k_1, \quad X\uppar
N_{k_1}(\tau_1)=X\uppar N_{j_1}(\tau_1-), \qquad \text{and}\quad 
X^+(\tau_1)=X^+(\tau_1-).
\]
Then for $k\in \{1,\ldots, m\}$,
\[
\|X\uppar N_k(\tau_1)\|\leq  \|X\uppar N_k(\tau_1-)\|\leq \|X^+_k(\tau_1-)\|=\|X^+_k(\tau_1)\|,\]
and, in particular, $X\uppar N(\tau_1) \preceq X^+(\tau_1)$. The construction is then repeated to extend the definition from time $\tau_1$ to $\tau_2$: take new pairs of spherically-ordered Brownian motions to determine the motion of particles up to time $\tau_2$, pick the branching particle $j_2$ in the $N$-BBM uniformly at random, and so on until time $\tau_{i^+}$, where $i^+=\inf\{i\ge 1\;:\; j_i\le m\}$. This time $\tau_{i^+}=\tau_1^+$ is a branching time for both the $N$-BBM and the BBM. Observe that $\tau_1^+\sim \mathop{\mathrm{Exp}}(m)$.

In the construction of the coupling so far we have that for $t<\tau_1^+$,
\[ \|X\uppar N_k(t)\| \le \|X^+_k(t)\| \quad \forall  \; k\in \{1,\ldots, m\},\]
and so, in particular, $X\uppar N(t)\preceq X^+(t)$.
At time $\tau^+_1- = \tau_{i^+}-$, the particles with index $j_{i^+} \in \{
1,\dots,m\}$ from both $X\uppar N$ and $X^+$ branch, and the particle in
$X\uppar N$ of maximal distance from the origin is removed. More precisely,
if $k_{i^+}$ is the index of the particle in $X\uppar N(\tau^+_1-)$ with maximal distance from the origin,
we let
\begin{align*}
X\uppar N_k(\tau_1^+)&= X\uppar N_k(\tau_1^+-) \text{ for }k\neq k_{i^+},
\quad X\uppar N_{k_{i^+}}(\tau_1^+)= X\uppar N_{j_{i^+}}(\tau_1^+-),
\\
\text{and }
\quad X^+(\tau_1^+)&=\Big(X^+_1(\tau_1^+-),\ldots, X_{j_{i^+}}^+(\tau_1^+-),X_{j_{i^+}}^+(\tau_1^+-),\ldots ,X^+_{m}(\tau_1^+-)\Big).
\end{align*}
Suppose $m<N$. Then
$\|X\uppar N_k(\tau^+_1)\|\le \|X^+_k(\tau^+_1-)\|$ $\forall k\in \{1,\ldots,m\}$ with $k\neq k_{i^+}$, and also
$\|X\uppar N_{k_{i^+}}(\tau^+_1)\|\le \|X^+_{j_{i^+}}(\tau^+_1-)\|$.
Moreover, if $k_{i^+}\le m$ then
$\|X\uppar N_{m+1}(\tau^+_1)\|\le \|X\uppar N_{k_{i^+}}(\tau^+_1-)\|\le \|X^+_{k_{i^+}}(\tau^+_1-)\|$.
It follows that ${X}\uppar N(\tau^+_1) \preceq {X}^+(\tau^+_1)$ if $m<N$.  This concludes the coupling construction to achieve \eqref{compclaim1}, and the proof of proposition is now complete.
\end{proof}

We now use Proposition~\ref{prop:lowercouple} to prove a lower bound on $F\uppar N(\cdot,\delta)$.

\begin{lem} \label{lem:F-}
There exists $N_0 <\infty $ such that for all $N \geq N_0$, for all $\mathcal{X}\in (\R^d)^N$ and for all $\delta \in (0,1)$,
\begin{align*}
&\P_{\mathcal{X}}  \bigg(  \inf_{r \ge  0}  \left(F\uppar N(r,\delta)-e^\delta G_\delta  
C_{e^{-\delta} - N^{-1/5}}  F\uppar N(r,0)\right) \leq -4 N^{-1/10}  \bigg) \leq 26e^{4\delta} N^{-11/10}.
\end{align*}
\end{lem}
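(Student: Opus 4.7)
The plan is to mirror the upper-bound strategy of Corollary~\ref{corr:F+} in the opposite direction: couple $X\uppar N$ to a BBM $X^+$ started from a \emph{smaller} initial configuration $\mathcal X^+$ obtained by discarding the particles of $\mathcal X$ farthest from the origin. If $m := |\mathcal X^+|$ is chosen so that $e^\delta m + N^{4/5} \le N$, then by the moment bound in Lemma~\ref{lem:Ftilde+} we have $|X^+(\delta)| \le N$ with high probability, Proposition~\ref{prop:lowercouple} applies and yields $F\uppar N(\cdot,\delta) \ge F^+(\cdot,\delta)$, and Lemma~\ref{lem:F+} then controls $F^+(\cdot,\delta)$ from below by $v^\ell(\cdot,\delta) := e^\delta G_\delta F^+(\cdot,0)$, which by our choice of $\mathcal X^+$ is essentially $e^\delta G_\delta C_{e^{-\delta}-N^{-1/5}} F\uppar N(\cdot,0)$.

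Concretely, set $m = \lfloor (e^{-\delta}-N^{-1/5})N \rfloor$ and take $\mathcal X^+$ to consist of the $m$ particles of $\mathcal X$ with smallest Euclidean norm (with any tiebreaking rule). By construction $\mathcal X \preceq \mathcal X^+$, and for every $r \ge 0$,
\[
F^+(r,0) = \tfrac{1}{N}|\mathcal X^+ \cap \mathcal B(r)| = \min\!\left(m/N,\, F\uppar N(r,0)\right),
\]
which differs from $C_{e^{-\delta}-N^{-1/5}} F\uppar N(r,0) = \min(e^{-\delta}-N^{-1/5},\, F\uppar N(r,0))$ by at most $1/N$ uniformly in $r$. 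Hence by the $L^\infty$-contractivity estimate~\eqref{eq:Gcbounds_probpaper},
\[
0 \le e^\delta G_\delta C_{e^{-\delta}-N^{-1/5}} F\uppar N(r,0) - v^\ell(r,\delta) \le e^\delta/N \le N^{-1/10}
\]
for $N$ sufficiently large (using $\delta < 1$).

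Next, we control two bad events. Since $N - e^\delta m \ge e^\delta N^{4/5} \ge N^{4/5}$, the second assertion of Lemma~\ref{lem:Ftilde+} applied to the BBM started from $\mathcal X^+$ gives
\[
\P_{\mathcal X}(|X^+(\delta)| > N) \le \P_{\mathcal X}(|X^+(\delta)| - e^\delta m \ge N^{4/5}) \le 13 e^{4\delta} N^{-6/5} \le 13 e^{4\delta} N^{-11/10}.
\]
Moreover, $e^\delta m / N \ge 1 - e^\delta(N^{-1/5} + 1/N) \ge 1 - N^{-1/10}$ for large $N$, so the hypothesis of Lemma~\ref{lem:F+} is met and
\[
\P_{\mathcal X}\!\left(\sup_{r \ge 0} |C_1 F^+(r,\delta) - C_1 v^\ell(r,\delta)| \ge 3N^{-1/10}\right) \le 13 e^{4\delta} N^{-11/10}.
\]

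On the intersection of the two good events, $F^+(\cdot,\delta) \le |X^+(\delta)|/N \le 1$ and $v^\ell(\cdot,\delta) \le e^\delta m/N \le 1 - e^\delta N^{-1/5} < 1$, so both caps $C_1$ are inert. Chaining $F\uppar N(r,\delta) \ge F^+(r,\delta) \ge v^\ell(r,\delta) - 3N^{-1/10}$ with the previous display yields the desired inequality with slack $-4N^{-1/10}$ uniformly in $r$, and a union bound produces the claimed probability $26 e^{4\delta} N^{-11/10}$. There is no genuine obstacle; the only delicate point is the calibration of $m$, which must be small enough that a BBM of $m$ initial particles stays below $N$ at time $\delta$ with overwhelming probability, yet close enough to $e^{-\delta}N$ that the hypothesis $e^\delta |\mathcal X^+|/N \ge 1 - N^{-1/10}$ of Lemma~\ref{lem:F+} still holds.
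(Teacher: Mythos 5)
Your proposal follows essentially the same route as the paper's proof: take $\mathcal X^+$ to be the $\lfloor(e^{-\delta}-N^{-1/5})N\rfloor$ particles of $\mathcal X$ closest to the origin (so $F^+(\cdot,0)$ equals $C_{e^{-\delta}-N^{-1/5}}F\uppar N(\cdot,0)$ up to an error of $1/N$), control the two bad events $\{|X^+(\delta)|>N\}$ and $\{\sup_r|C_1F^+ - C_1 v^\ell|\geq 3N^{-1/10}\}$ via Lemma~\ref{lem:Ftilde+} and Lemma~\ref{lem:F+}, observe that both $C_1$'s are inert on the good event, and then apply Proposition~\ref{prop:lowercouple} together with a union bound. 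This matches the paper's argument step for step (the only cosmetic slip is writing $\P_{\mathcal X}$ where $\P_{\mathcal X^+}$ is meant for events concerning the BBM started from $\mathcal X^+$).
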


\begin{proof}
Take $\delta \in (0,1)$ and $\mathcal X \in (\R^d)^N$.
For $r\ge 0$, let $f_{\mathcal X}(r)=N^{-1} |\mathcal X \cap \B(r)|$;
note that if $X\uppar N(0)=\mathcal X$ then $F\uppar N(r,0)=f_{\mathcal X}(r)$.
Let $\mathcal X^+ \subset \mathcal X$ consist of the $\lfloor N(e^{-\delta}-N^{-1/5})\rfloor$ particles in $\mathcal X$ which are closest to the origin.
Let  $(X^+(t),t\ge 0)$ be the BBM started from $X^+(0) = \mathcal{X}^+$.  Observe that $F^+(r,0) =  C_{\lfloor N(e^{-\delta}-N^{-1/5})\rfloor/N} f_{\mathcal X}(r)$, so that 
\begin{equation}
C_{e^{-\delta} - N^{-1/5}} f_{\mathcal X}(r) -N^{-1}\leq F^+(r,0) \leq C_{e^{-\delta} - N^{-1/5}} f_{\mathcal X}(r)\quad\forall r\ge0.
 \label{initialCut}
\end{equation}
Let $R_1$ be the event
\[
R_1 = \left \{ \big|X^+(\delta)\big| \leq N \right \}.
\]
Since  $|\mathcal X^+|\le  N(e^{-\delta}-N^{-1/5})$, one has
$N^{-1}e^\delta |\mathcal X^+|-1\le -N^{-1/5}e^\delta \le -N^{-1/5}$, and
Lemma~\ref{lem:Ftilde+} implies that  for $N$ sufficiently large,
\[
\P_{\mathcal{X}^+} \big( R_1^c  \big)\leq \P_{\mathcal{X}^+}\Big(  N^{-1}\big(|X^+(\delta)| - e^\delta |\mathcal{X}^+|\big) \geq N^{-1/5}\Big) \leq 13e^{4 \delta}N^{-6/5}.
\]
Let $R_2$ be the event
\[
R_2 =  \bigg \{ \sup_{r \ge  0}  \left| C_1 F^{+}(r,\delta)- C_1 v^{\ell}(r,\delta)\right| < 3 N^{-1/10} \bigg\},
\]
where, as in~\eqref{vl} in Section~\ref{subsec:hydro1upper}, we let $v^{\ell}(r,\delta)=e^\delta G_\delta 
F^{+}(r,0) $.
Since $\big|\mathcal X^+\big|>N(e^{-\delta}-N^{-1/5})-1$, one has
$e^\delta N^{-1}|\mathcal X^+|\geq 1-e^\delta N^{-1/5}-e^\delta N^{-1}\geq 1-N^{-1/10}$
if $N$ is sufficiently large that $e (N^{-1/5}+N^{-1})\leq N^{-1/10}$.
Then, by Lemma \ref{lem:F+}, we know that for $N$ sufficiently large,
\begin{align*} 
\P_{ \mathcal{X}^+ } \big(  R_2^c \big) &\leq 13 e^{4\delta}N^{-11/10}.
\end{align*}
Since $e^\delta N^{-1}\big|\mathcal X^+\big|< 1$ we have $v^{\ell}(\cdot,\delta) <1$ (by~\eqref{Fpmean} in the proof of Lemma~\ref{lem:Ftilde+}).
By \eqref{initialCut}, we therefore have that for $r\ge 0$,
\[
C_1 v^{\ell}(r,\delta) = v^{\ell}(r,\delta) 
= e^\delta G_\delta F^+(r,0)\geq e^\delta G_\delta C_{e^{-\delta} - N^{-1/5}} f_{\mathcal X}(r) - e^{\delta}N^{-1}.
\]
On the event $R_1$ we also have $F^+(\cdot,\delta)\le N^{-1}\big|X^+(\delta)\big|\le 1$ and hence $C_1 F^+ (\cdot,\delta)= F^+(\cdot,\delta)$.
This shows that on the event $R_1 \cap R_2$, we have both
$|X^+(\delta)| \leq N$ and
\begin{align*}
\inf_{r \ge  0}  \left(F^{+}(r,\delta)- e^\delta G_\delta 
C_{e^{-\delta} - N^{-1/5}}  f_{\mathcal X}(r)\right) 
&\geq \inf_{r \ge  0}  \left(C_1F^{+}(r,\delta)-C_1 v^{\ell}(r,\delta)\right)-e^\delta N^{-1}
\\&> - 3 N^{-1/10} - e^{\delta} N^{-1}.
\end{align*} 
Note that since $\mathcal X^+\subset \mathcal X$ we have $\mathcal X \preceq \mathcal X^+$.
Therefore, by Proposition \ref{prop:lowercouple}, and taking $N$ sufficiently large that $eN^{-1}\leq N^{-1/10}$,
\begin{align*}
& \P_{\mathcal X} \bigg(  \inf_{r \ge 0} \Big( F\uppar N(r,\delta)-e^\delta G_\delta  
C_{e^{-\delta} - N^{-1/5}}  f_{\mathcal X}(r)\Big) > -4 N^{-1/10} \bigg) \\
&\quad \geq \P_{\mathcal X^+} \bigg(  \inf_{r \ge 0} \Big( F^+(r,\delta)-e^\delta G_\delta  
C_{e^{-\delta} - N^{-1/5}}  f_{\mathcal X}(r)\Big) > -4 N^{-1/10} ,\, \big|X^+(\delta)\big|\leq N\bigg)\\
& \quad  \geq \P_{\mathcal{X}^+}\big(R_1\cap R_2\big)\\
& \quad \geq 1 - \P_{\mathcal{X}^+} \big(  R_1^c  \big) - \P_{\mathcal{X}^+} \big(  R_2^c  \big) \\
& \quad  \geq 1 - 13e^{4 \delta}N^{-6/5} - 13 e^{4\delta}N^{-11/10},
\end{align*}
which completes the proof, since $f_{\mathcal X}(\cdot)=F\uppar N(\cdot,0)$ if $X\uppar N(0)=\mathcal X$.
\end{proof}

As in~\eqref{eq:defv+v-_probpaper}, for $k\in \N_0$ and $\delta>0$, let
\[
v^{k,\delta,-}= (e^\delta G_\delta 
C_{e^{-\delta}})^k F\uppar N(\cdot ,0).
\]
By applying Lemma~\ref{lem:F-} repeatedly, we can prove the following lower bound, which, together with the upper bound in Proposition~\ref{prop:FN}, will allow us to prove Proposition~\ref{prop:NBBM1d}.

\begin{prop}\label{prop:F-}
There exists $N_0 <\infty$ such for all $N \geq N_0$, for all $\mathcal X \in (\R^d)^N$, $\delta\in (0,1)$ and  $K\in \N$,
\begin{align*}
\psub{\mathcal X}{ \inf_{r\geq 0} \left( F\uppar N(r,K\delta)-v^{K,\delta,-}(r)\right)
\leq - 5K e^{K\delta}N^{-1/10} }
&\leq 26K e^{4\delta}N^{-11/10}.
\end{align*}
\end{prop}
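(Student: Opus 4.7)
The plan is to mirror the iteration used in the proof of Proposition~\ref{prop:FN}, with Lemma~\ref{lem:F-} playing the role of Corollary~\ref{corr:F+}. For $k \in \{1,\ldots,K\}$ I would define
\[
E_k = \bigg\{ \inf_{r \geq 0}\Big(F\uppar N(r,k\delta) - e^\delta G_\delta C_{e^{-\delta}-N^{-1/5}} F\uppar N(r,(k-1)\delta)\Big) > -4N^{-1/10} \bigg\}
\]
and $E_* = \bigcap_{k=1}^K E_k$. Conditioning on $\mathcal F_{(k-1)\delta}$ and applying Lemma~\ref{lem:F-} with the initial configuration $X\uppar N((k-1)\delta)$ gives $\P_{\mathcal X}(E_k^c) \leq 26 e^{4\delta} N^{-11/10}$, and a union bound yields $\P_{\mathcal X}(E_*^c) \leq 26K e^{4\delta} N^{-11/10}$.

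The next step is to convert the one-step bound on $E_k$ into one that uses the cap $C_{e^{-\delta}}$ appearing in the definition of $v^{k,\delta,-}$. For any $f \colon [0,\infty) \to [0,1]$ the pointwise inequality $C_{e^{-\delta}-N^{-1/5}} f \geq C_{e^{-\delta}} f - N^{-1/5}$ holds, and since $G \geq 0$ with $\int_0^\infty G(y,r,\delta)\,\diffd y \leq 1$, applying $e^\delta G_\delta$ gives
\[
e^\delta G_\delta C_{e^{-\delta}-N^{-1/5}} f(r) \geq e^\delta G_\delta C_{e^{-\delta}} f(r) - e^\delta N^{-1/5}.
\]
For $\delta \in (0,1)$ and $N$ large enough that $e N^{-1/5} \leq N^{-1/10}$, this gives on $E_*$ that, for every $k \in \{1,\ldots,K\}$ and all $r \geq 0$,
\[
F\uppar N(r,k\delta) \geq e^\delta G_\delta C_{e^{-\delta}} F\uppar N(r,(k-1)\delta) - 5 N^{-1/10}.
\]

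The final step is to iterate. The operator $e^\delta G_\delta C_{e^{-\delta}}$ is monotone, and a pointwise inequality $f \geq g - c$ implies $C_{e^{-\delta}} f \geq C_{e^{-\delta}} g - c$ and hence $e^\delta G_\delta C_{e^{-\delta}} f \geq e^\delta G_\delta C_{e^{-\delta}} g - e^\delta c$. Iterating the previous display on $E_*$ starting from $k=1$ then yields, for $k \in \{1,\ldots,K\}$,
\[
F\uppar N(r,k\delta) \geq v^{k,\delta,-}(r) - 5 N^{-1/10}\bigl(1 + e^\delta + \cdots + e^{(k-1)\delta}\bigr) \geq v^{k,\delta,-}(r) - 5k e^{k\delta} N^{-1/10}.
\]
Taking $k = K$ and combining with the bound on $\P_{\mathcal X}(E_*^c)$ gives the conclusion. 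The only real subtlety is the bookkeeping needed to absorb the cap-shift error $e^\delta N^{-1/5}$ together with the additive error $4N^{-1/10}$ from Lemma~\ref{lem:F-} into the single term $5N^{-1/10}$; once that is done, the argument is a direct replay of the iterative scheme used for Proposition~\ref{prop:FN}.
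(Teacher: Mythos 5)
Your proposal is correct and follows essentially the same approach as the paper's proof: the same events $E_k$ and union bound via the Markov property, the same cap-shift bound $\|C_{e^{-\delta}-N^{-1/5}}f - C_{e^{-\delta}}f\|_\infty \le N^{-1/5}$, and the same contraction/monotonicity argument to propagate the error with a factor $e^\delta$ per step. The only difference is cosmetic bookkeeping — you absorb the cap-shift error into $5N^{-1/10}$ before iterating, whereas the paper carries $4N^{-1/10}+e^\delta N^{-1/5}$ through the iteration and absorbs it at the end — but both lead to the same final estimate.
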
 

\begin{proof}
For $k \in \{1,\dots,K\}$ and $r \geq 0$, we can write
\begin{align} \label{eq:FNlowersplit}
F\uppar N(r,k\delta)-v^{k,\delta,-}(r) 
&=  F\uppar N(r,k\delta)
-e^\delta G_\delta C_{e^{-\delta}-N^{-1/5}}  F\uppar N\big(r,(k-1)\delta\big)
\notag \\ &\quad
 +e^\delta G_\delta C_{e^{-\delta}-N^{-1/5}} F\uppar N\big(r,(k-1)\delta\big)
-e^\delta G_\delta C_{e^{-\delta}} F\uppar N\big(r,(k-1)\delta\big)
\quad\notag \\ &\quad
+ e^\delta G_\delta C_{e^{-\delta}}  F\uppar N\big(r,(k-1)\delta\big)
-e^\delta G_\delta C_{e^{-\delta}}v^{k-1,\delta,-}(r).
\end{align}
For the second line on the right hand side of~\eqref{eq:FNlowersplit}, note
that $\big\|C_{e^{-\delta}-N^{-1/5}}f-C_{e^{-\delta}}f\big\|_{L^\infty}\leq N^{-1/5}$ for any $f:[0,\infty)\to \R$, and so, using~\eqref{eq:Gcbounds_probpaper},
\begin{equation} \label{eq:FNline2}
\Big\|e^\delta G_\delta C_{e^{-\delta}-N^{-1/5}}  
F\uppar N(\cdot,(k-1)\delta)-e^\delta G_\delta C_{e^{-\delta}}  
F\uppar N(\cdot,(k-1)\delta)\Big\|_{L^\infty}
\leq e^\delta N^{-1/5}.
\end{equation}
For the third line on the right hand side of~\eqref{eq:FNlowersplit}, 
observe that $C_m f(r) -C_m g(r)\ge
\min\big(0,f(r)-g(r)\big)$ for any $m\in(0,1)$ and any $f,g:[0,\infty)\to \R$. Then, since $\min\big(0,f(r)-g(r)\big)\le0$, we have that for any $\delta>0$, $m\in (0,1)$ and $r\ge 0$,
$$G_\delta C_m f(r)-G_\delta C_m g(r)\ge G_\delta \min\Big(0,f(r)-g(r)\Big)
\ge \inf_{y\ge0} \min\Big(0,f(y)-g(y)\Big),$$
where we used from \eqref{wdef_probpaper} and \eqref{Gdef_probpaper} that 
$G\ge0$ and $\int_0^\infty  G(y,r,t)\,\diffd y\le1$.
It follows that
\begin{align}
&\inf_{r\ge0}\Big(
	 G_\delta C_{e^{-\delta}}  F\uppar N\big(r,(k-1)\delta\big) 
	-G_\delta C_{e^{-\delta}}v^{k-1,\delta,-}(r)
\Big) 
\notag\\&\hspace{5cm}\ge \min\bigg(0,\inf_{y\ge0}\Big(F\uppar N\big(y,(k-1)\delta\big)
-v^{k-1,\delta,-}(y)\Big)\bigg).
\label{eq:FNline3}
 \end{align}
To control the first line of the right hand side of~\eqref{eq:FNlowersplit},
for $k\in \N$, define the event
\begin{align*}
E_k&:=\left\{ \inf_{r\geq 0} \left( F\uppar N(r,k\delta) - e^\delta 
G_\delta C_{e^{-\delta}-N^{-1/5}} F\uppar N\big(r,(k-1)\delta\big) \right)
> - 4N^{-1/10}\right\}
\end{align*}
and let
\[
E_* = \bigcap_{k=1}^K E_k.
\]
Then on the event $E_*$, using~\eqref{eq:FNline2} and~\eqref{eq:FNline3}, we have for each $k\in \{1,\ldots,K\}$, for all $r \geq 0$,
\begin{align*}
F\uppar N(r,k\delta)-v^{k,\delta,-}(r)
 &> -4N^{-1/10} -e^\delta N^{-1/5}\\
& \quad + e^\delta \min\left(0, \,  \inf_{y\geq 0} \left(   
F\uppar N(y,(k-1)\delta)- v^{k-1,\delta,-}(y) \right) \right).
\end{align*}
By iterating this bound, it follows that on the event $E_*$, for $k\in 
\{1,\ldots,K\}$,
\begin{equation} \label{eq:E*lower}
\inf_{r\geq 0} \left(F\uppar N(r,k\delta)-v^{k,\delta,-}(r)\right)
> - (4N^{-1/10}+e^\delta N^{-1/5}) k e^{k\delta}.
\end{equation}

To estimate $\P_{\mathcal X}(E_*^c)$, we use a union bound and Lemma~\ref{lem:F-}.  Specifically,
\begin{align*}
\P_{\mathcal X}(E_*^c) =  \psub{\mathcal X}{\bigcup_{k=1}^K E_k^c } \leq \sum_{k=1}^K \P_{\mathcal X}(E_k^c ).
\end{align*}
By the Markov property, for $k\in \{1,\ldots, K\}$,
\begin{align*}
\P_{\mathcal X}(E_k^c)  = \E_{\mathcal X} \Big[ \P_{\mathcal X}\big( E_k^c \;\big|\; \mathcal{F}_{(k-1)\delta}\big)\Big]
 =  \E_{\mathcal X} \Big[H\Big( X\uppar N\big((k-1)\delta\big)\Big) \Big],
\end{align*}
where $H:(\R^d)^N \to \R$ is defined by
\[
H(\mathcal{X}') =  \P_{\mathcal X'} \left( \inf_{r\geq 0} \left(F\uppar N(r,\delta) - e^\delta 
G_\delta C_{e^{-\delta}-N^{-1/5}} F\uppar N(r,0)\right) 
\le - 4N^{-1/10}\right).
\]
Therefore, by Lemma~\ref{lem:F-}, for $N \geq N_0$,
\[
\P_{\mathcal X}(E_*^c) \leq \sum_{k=1}^K 26 e^{4 \delta} N^{-11/10} = 26 K e^{4\delta}N^{-11/10}.
\]
Taking $N$ sufficiently large that $eN^{-1/5}\leq N^{-1/10},$
the result follows by~\eqref{eq:E*lower}.
\end{proof}

\subsubsection{Combining the upper and lower bounds for the proof of Proposition~\ref{prop:NBBM1d}}

We can now complete the proof of Proposition~\ref{prop:NBBM1d}.
Let $v\uppar N$ denote the solution of~\eqref{pbv_probpaper} with initial condition $v_0(r)=F \uppar 
N(r,0)$ for $r\geq 0$.
By Lemma~\ref{lem:vnapprox} we have that
for $\delta>0$, $k\in \N_0$ and $r\geq 0$,
$$
v^{k,\delta,-}(r)=(e^\delta G_\delta C_{e^{-\delta}})^k F\uppar N(r,0)\leq v\uppar N 
(r,k\delta)\leq (C_1 e^\delta G_\delta)^k F\uppar N(r,0)=v^{k,\delta,+}(r)
$$
and
$$
\big\|v^{k,\delta,+}-v^{k,\delta,-}\big\|_{L^\infty}\leq (e^{k\delta}+1)(e^\delta -1).
$$
Therefore, for $N$ sufficiently large, for $\mathcal X \in (\R^d)^N$, $\delta \in (0,1)$ and $K\in \N$, by 
Proposition \ref{prop:FN},
$$
\psub{\mathcal X}{\sup_{r\geq 0}
\Big(F\uppar N(r,K\delta)-v \uppar N(r,K\delta)\Big)
\geq 3K e^{K\delta}N^{-1/10} +(e^{K\delta}+1)(e^\delta-1) }
\leq 13K e^{4\delta}N^{-11/10}
$$
and by Proposition~\ref{prop:F-}, 
\begin{align*}
\psub{\mathcal X}{ \inf_{r\geq 0} \Big( F\uppar N(r,K\delta)-v\uppar N(r,K\delta)\Big)
\leq - 5 K e^{K\delta}N^{-1/10} -(e^{K\delta}+1)(e^\delta-1)}
\leq 26K e^{4\delta}N^{-11/10}.
\end{align*}
It follows that for $N$ sufficiently 
large, for $\mathcal X \in (\R^d)^N$, $\delta \in (0,1)$ and $K\in \N$, 
\begin{align*}
\psub{\mathcal X}{ \sup_{r\geq 0}\Big|F\uppar N(r,K\delta)-v\uppar N(r,K\delta)\Big|
\geq 5K e^{K\delta}N^{-1/10} +(e^{K\delta}+1)(e^\delta-1)}
&\leq 39 K e^{4\delta}N^{-11/10}.
\end{align*}
Take $t>0$, and let $K=\big\lceil N^{1/20}t\big \rceil$ and $\delta=t/K$.
Then $\delta \leq N^{-1/20}$ and so for $N$ sufficiently large (not depending on $t$),
\begin{align*}
5K e^{K\delta}N^{-1/10} +(e^{K\delta}+1)(e^\delta-1)
&\leq 5(N^{1/20}t +1)e^t N^{-1/10}+(e^t+1)(e^{N^{-1/20}}-1)\\
&\leq 5(t+1)e^t N^{-1/20}+4e^t N^{-1/20}\\
&\leq 9e^{2t} N^{-1/20}.
\end{align*}
Also, for $N$ sufficiently large (still not depending on $t$), $39 Ke^{4\delta}N^{-11/10}\leq 
40(t+1)N^{-21/20}$. 
Therefore for $N$ sufficiently large, for $\mathcal X \in (\R^d)^N$ and $t>0$,
\begin{align*}
\psub{\mathcal X}{\sup_{r\geq 0}\Big|F\uppar N(r,t)-v\uppar N(r,t)\Big|
\geq 9e^{2t} N^{-1/20}}
&\leq 40e^t N^{-21/20}.
\end{align*}
This completes the proof of Proposition~\ref{prop:NBBM1d}.

\subsection{Proof of Proposition~\ref{prop:bound on M}} \label{subsec:bound on M}

We begin with the following lemma, which is a consequence of Proposition~\ref{prop:NBBM1d} and a concentration estimate for $F \uppar N(\cdot, 0)$  in the case where $X_1\uppar N  (0),\ldots, X_N \uppar N (0)$ are i.i.d.~with some fixed distribution $\mu_0$. This lemma will be used later to argue that at a fixed time $t$, $F\uppar N(R_t,t)$ is close to 1, where $(u,R)$ is the solution of the free boundary problem~\eqref{pbu_probpaper} with initial condition $\mu_0$.

\begin{lem} \label{cor:iidhydro}
There exists a constant $c_3 \in (0,1)$ such that the following holds.
Suppose $X_1\uppar N (0),$  $ $\ldots,$ $ $X_N \uppar N (0)$ are i.i.d.~with distribution given 
by $\mu_0$.
Let $v$ denote the solution of~\eqref{pbv_probpaper} with initial condition $v_0(r)=\mu_0(\B(r))$.
Then for $N$ sufficiently large, for $t\geq 0$, 
\begin{align*}
\p{  \big\|F\uppar N(\cdot,t)-v(\cdot,t)\big\|_{L^\infty}\geq e^{2t}N^{-c_3}}
\le e^t N^{-1-c_3}.
\end{align*} 
\end{lem}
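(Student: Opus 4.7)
The plan is to combine three ingredients: the quantitative deterministic-initial-condition hydrodynamic bound from Proposition~\ref{prop:NBBM1d}, the continuity of the obstacle problem with respect to initial data from Lemma~\ref{lem:cont init cond_probpaper}, and a concentration inequality for the empirical CDF coming from the i.i.d.~initial configuration. The randomness of $X\uppar N(0)$ enters only through the initial empirical distribution; once we condition on $X\uppar N(0)$, Proposition~\ref{prop:NBBM1d} gives a uniform control on $F\uppar N(\cdot,t)-v\uppar N(\cdot,t)$, and only the discrepancy between $v\uppar N(\cdot,t)$ and $v(\cdot,t)$ still depends on the initial random fluctuation.

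First, I would apply Proposition~\ref{prop:NBBM1d} conditionally on $\mathcal X = X\uppar N(0)$: for $N$ large and any $t>0$,
\[
\P_{\mathcal X}\!\left(\sup_{r\ge 0}\big|F\uppar N(r,t)-v\uppar N(r,t)\big|\ge e^{2t}N^{-c_1}\right)\le e^t N^{-1-c_1},
\]
where $v\uppar N$ solves \eqref{pbv_probpaper} with initial data $v_0\uppar N(r):=F\uppar N(r,0)$. Next, since both $v\uppar N$ and $v$ solve~\eqref{pbv_probpaper}, with respective initial data $v_0\uppar N$ and $v_0$, Lemma~\ref{lem:cont init cond_probpaper} gives
\[
\|v\uppar N(\cdot,t)-v(\cdot,t)\|_{L^\infty}\le e^t\,\|v_0\uppar N-v_0\|_{L^\infty}.
\]
So it remains to show that $\|v_0\uppar N-v_0\|_{L^\infty}$ is small with very high probability.

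To that end, I would observe that $Y_i:=\|X_i\uppar N(0)\|$ for $i=1,\ldots,N$ are i.i.d.\@ real random variables with distribution function $r\mapsto \mu_0(\mathcal B(r))=v_0(r)$, and that $v_0\uppar N(r)=N^{-1}\sum_{i=1}^N\indic{Y_i<r}$ is the corresponding empirical CDF. The classical Dvoretzky--Kiefer--Wolfowitz inequality then yields
\[
\P\!\left(\|v_0\uppar N-v_0\|_{L^\infty}\ge N^{-a}\right)\le 2e^{-2N^{1-2a}}\qquad \forall\, a\in(0,1/2),
\]
which decays super-polynomially. Picking $a\in(0,c_1)\cap(0,1/2)$ and unionising the two bad events, on the complement one has
\[
\|F\uppar N(\cdot,t)-v(\cdot,t)\|_{L^\infty}\le e^{2t}N^{-c_1}+e^{t}N^{-a}\le 2e^{2t}N^{-a},
\]
with failure probability at most $e^tN^{-1-c_1}+2e^{-2N^{1-2a}}$. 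Choosing $c_3$ slightly smaller than $a$ (e.g.\ $c_3=a/2$) absorbs the factor $2$ in front of $N^{-a}$ and the super-polynomial term in the failure probability into the desired bounds $e^{2t}N^{-c_3}$ and $e^tN^{-1-c_3}$, for all $N$ sufficiently large (independently of $t$).

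There is no real obstacle here; the only mildly delicate step is the bookkeeping in step~3, namely checking that one can simultaneously arrange $2N^{-a}\le N^{-c_3}$ and $2e^{-2N^{1-2a}}+e^tN^{-1-c_1}\le e^tN^{-1-c_3}$ with a single constant $c_3$ valid uniformly in $t\ge 0$. This is fine because the exponential-in-$N$ term is crushed by any polynomial, and the polynomial gap $c_1-c_3>0$ gives room for the multiplicative constants once $N$ is taken large enough.
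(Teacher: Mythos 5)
Your proposal is correct and follows essentially the same route as the paper: apply Proposition~\ref{prop:NBBM1d} conditionally on the initial configuration, use Lemma~\ref{lem:cont init cond_probpaper} to transfer the $L^\infty$ error in the initial data forward, and invoke the sharp DKW/Massart inequality to control $\|F\uppar N(\cdot,0)-v_0\|_{L^\infty}$. The paper's bookkeeping differs only cosmetically (it fixes $c_3\in(0,\min(c_1,1/2))$ and sets $\epsilon=\tfrac12 N^{-c_3}$ rather than first choosing $a$ and then $c_3=a/2$), but the estimate and the choice-of-constants argument are the same.
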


The difference between Lemma \ref{cor:iidhydro} and Proposition~\ref{prop:NBBM1d} is that the initial condition for $v(\cdot,t)$ in Lemma~\ref{cor:iidhydro} is given by $v_0(r)=\mu_0(\B(r))$, whereas the initial condition for $v\uppar N(\cdot,t)$ in Proposition~\ref{prop:NBBM1d} is given by $v_0(r)=F\uppar N(r,0)$.
 
\begin{proof}
Recall from Proposition~\ref{prop:NBBM1d} that there is a constant $c_1>0$ such that for
$N$ large enough, for $t\ge 0$,
$$
\p{ \left\|F\uppar N (\cdot,t)-v\uppar N(\cdot,t) \right\|_{L^\infty}
\geq e^{2t} N^{-c_1} }
\leq e^t N^{-1-c_1},
$$
where  $v \uppar N$ is the solution of~\eqref{pbv_probpaper} with initial
condition   $v\uppar N_0(r) = F\uppar N(r,0)$. By Lemma~\ref{lem:cont init
cond_probpaper},
\begin{equation} \label{eq:iidhydro1}
\big\|v\uppar N(\cdot,t)-v(\cdot, t)\big\|_{L^\infty} \le e^t
  \big\|F \uppar N(\cdot,0)-\mu_0(\B(\cdot))\big\|_{L^\infty}.
\end{equation}
The function $v_0(r) = \mu_0(\B\big(r)\big)$ is the cumulative distribution function for each of the real-valued random variables $\|X_i \uppar N(0)\|$, $i = 1,\dots,N$, which are independent.  Therefore, it follows immediately from Corollary~1 and Comment~2(iii) of~\cite{Mass90} (which is a sharp, quantitative version of the Glivenko-Cantelli theorem), that
\begin{equation} \label{eq:iidhydro2}
\P\left( \Big\|F \uppar N(\cdot,0)-\mu_0(\B\big(\cdot)\big)\Big\|_{L^\infty}> \epsilon \right)
\le 2e^{-2N \epsilon^2}
\end{equation}
holds for all $\epsilon > 0$ and $N \geq 1$.  

For $\epsilon > 0$ to be chosen, let $E$ be the event
\begin{equation*}
E=\Big\{\big\|F \uppar
N(\cdot,0)-\mu_0\big(\B(\cdot)\big)\big\|_{L^\infty}\le
\epsilon \Big\}.
\end{equation*}
Then, for $c_3>0$  to be determined, $N$ large enough for
Proposition~\ref{prop:NBBM1d} to hold, and $t\ge 0$,
\begin{align*}
&\p{  \big\|F\uppar N(\cdot,t)-v(\cdot,t)\big\|_{L^\infty}\geq e^{2t}N^{-c_3}}
\\&\qquad\qquad\le 
\p{  \big\|F\uppar N(\cdot,t)-v\uppar N(\cdot,t)\big\|_{L^\infty}
   + \big\|v\uppar N(\cdot,t)-v        (\cdot,t)\big\|_{L^\infty}\geq e^{2t}N^{-c_3}}
\\&\qquad\qquad\le 
\p{  \big\|F\uppar N(\cdot,t)-v\uppar N(\cdot,t)\big\|_{L^\infty}
   + \big\|v\uppar N(\cdot,t)-v        (\cdot,t)\big\|_{L^\infty}\geq e^{2t}N^{-c_3}
\;,\; E}+\P(E^c)
\\&\qquad\qquad\le 
\p{  \big\|F\uppar N(\cdot,t)-v\uppar N(\cdot,t)\big\|_{L^\infty}
   + e^t \epsilon \geq e^{2t}N^{-c_3} 
\;,\; E}+ 2e^{-2N \epsilon^2},
\end{align*}
by~\eqref{eq:iidhydro1} and~\eqref{eq:iidhydro2}.
Choose $c_3 \in (0,\min(c_1,1/2))$ and $\epsilon = \frac{1}{2}N^{-c_3}$. Then, for $N$ large enough, one has
$e^{2t}N^{-c_3}-e^t \epsilon \ge e^{2t}N^{-c_1}$ for all $t\ge0$ (it is
sufficient for the inequality to hold at $t=0$).  Therefore
\begin{align*}
\p{  \big\|F\uppar N(\cdot,t)-v(\cdot,t)\big\|_{L^\infty}\geq e^{2t}N^{-c_3}}
\le e^t N^{-1-c_1} + 2e^{-\frac{1}{2} N^{1 - 2 c_3} }.  
\end{align*}
Since $1 - 2c_3 > 0$, we may take $N$ larger if necessary so that $e^t N^{-1-c_1} + 2e^{-\frac{1}{2} N^{1 - 2 c_3} } \le e^t
N^{-1-c_3}$ for all $t\ge0$ (it is
sufficient for the inequality to hold at $t=0$), and the proof is complete.
\end{proof}

Now consider an $N$-BBM $X\uppar N$ started from an initial condition $\mathcal
X\in(\R^d)^N$. Recall the coupling in
Section~\ref{sec:notations} between $X\uppar N$ and $X^+$, 
where $X^+(t)=(X^+_u(t))_{u\in \set+_t}$ is the vector of particle locations at time $t$ in a standard BBM started from the same initial condition $\mathcal
X$, such that under the coupling, for all $t\ge0$,
$X\uppar N (t)\subseteq X^+(t).$
Recall also from~\eqref{eq:XNcontains} that under the coupling, for $t\ge 0$, almost surely
\begin{align} \label{eq:XNcontainsbis}
\{X\uppar N_k (t)\}_{k=1}^N & = \{X^+_u(t)\}_{u\in \setN _t}, \quad \text{where } \quad \setN _t=  \Big\{ u\in \set+_t :
\big\|X^+_{u}(s)\big\| \le M\uppar N_{s}\ \forall s\in [0,t] \Big\},
\end{align}
and
$M\uppar N_s=\max_{k\in \{1,\ldots,N\}} \|X_k\uppar N(s)\|$ is the
maximum distance of a particle in the $N$-BBM from the origin at time $s$.

For Lemmas \ref{ZbigBsmall} to \ref{lemAsup}, we also introduce two quantities. For any particle $u\in \set+ _t$ in the BBM at time $t$, let $B_{u}(s)$ be the displacement 
of that particle at time $s\in[0,t]$ from its location at time $0$:
$$B_{u}(s) = X_{u}^+(s) - X_{u}^+(0).$$
For $\epsilon>0$ and $r>0$, let 
$Z_\epsilon(r)$ denote the number of particles in the BBM at time $\epsilon$ which started (at time 0) from a particle in $\B(r)$:
$$Z_\epsilon(r) = \Big|\Big\{u\in \set+_\epsilon :
X^{+}_{u}(0)\in\B(r)\Big\}\Big|.$$

\begin{lem}\label{ZbigBsmall}
For $\epsilon, r>0$, if the following two conditions hold:
\begin{itemize}
\item $Z_\epsilon(r)> N$,
\item $\displaystyle
\max_{u\in \set+_{2\epsilon} }\sup_{s\in[0,2\epsilon]}\big\|B_{u}(s)\big\| \le
\tfrac13 \epsilon^{1/3}$,
\end{itemize}
then
$$
M\uppar N_{s}\le r +\epsilon^{1/3}\quad \forall
s\in[\epsilon,2\epsilon].$$
\end{lem}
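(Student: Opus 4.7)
The plan is a counting argument: the hypothesis $Z_\epsilon(r)>N$ produces more ``good'' BBM particles (descendants of initial particles in $\B(r)$) than the $N$-BBM can keep alive, which forces the removal of at least one good ancestor. The last such removal time will pin down the positions of all surviving $N$-BBM particles, and the displacement hypothesis will then control their motion over the short remaining interval.

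First, I would observe that if $u\in\set+_{s'}$ for some $s'\in[0,2\epsilon]$ has its time-$0$ ancestor inside $\B(r)$, then
\[
\|X^+_u(s')\|\le\|X^+_u(0)\|+\|B_u(s')\|<r+\tfrac{1}{3}\epsilon^{1/3};
\]
call such particles \emph{good}. Now fix $s\in[\epsilon,2\epsilon]$. The number of good BBM particles at time $s$ is non-decreasing in $s$, hence at least $Z_\epsilon(r)>N$. Since $|\setN_s|=N$, at least one good BBM particle is not in $\setN_s$, meaning that some ancestor of a good particle was removed from $\setN$ at a branching time in $(0,s]$. Let $T$ denote the largest such removal time.

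At time $T$, the removed particle $w\in\setN_{T-}$ is itself good (by choice of $T$), so $\|X^+_w(T-)\|\le r+\tfrac{1}{3}\epsilon^{1/3}$. The selection rule removes the $\setN$-particle farthest from the origin, so $M\uppar N_{T-}=\|X^+_w(T-)\|\le r+\tfrac{1}{3}\epsilon^{1/3}$; moreover, since the two offspring added at time $T$ sit at the location of their (already-present) branching parent, $M\uppar N_T\le M\uppar N_{T-}\le r+\tfrac{1}{3}\epsilon^{1/3}$.

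Finally, for any $u\in\setN_s$ let $w_T$ be its ancestor at time $T$. Then $w_T\in\setN_T$---otherwise some ancestor of $u$ would have been killed, contradicting $u\in\setN_s$---so $\|X^+_{w_T}(T)\|\le M\uppar N_T\le r+\tfrac{1}{3}\epsilon^{1/3}$. Using $X^+_u(T)=X^+_{w_T}(T)$ (ancestors share positions) and the displacement hypothesis,
\[
\|X^+_u(s)-X^+_u(T)\|=\|B_u(s)-B_u(T)\|\le 2\cdot\tfrac{1}{3}\epsilon^{1/3}=\tfrac{2}{3}\epsilon^{1/3},
\]
hence $\|X^+_u(s)\|\le r+\epsilon^{1/3}$. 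Taking the maximum over $u\in\setN_s$ yields $M\uppar N_s\le r+\epsilon^{1/3}$, which is the claim. The main obstacle is the counting step: one must carefully justify that a good BBM particle missing from $\setN_s$ necessarily corresponds to the killing of a \emph{good} ancestor, which in turn relies on the characterisation \eqref{eq:XNcontains} of $\setN$ and the fact that every ancestor of any $u\in\setN_s$ must have lain in $\setN$ at the relevant earlier times.
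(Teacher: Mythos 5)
Your argument is correct, and it reaches the same conclusion by a genuinely different route. The paper argues by contradiction that some $s^*\in[0,\epsilon]$ must satisfy $M\uppar N_{s^*}\le r+\tfrac13\epsilon^{1/3}$ (otherwise $\setN_\epsilon$ would contain all $Z_\epsilon(r)>N$ good particles, impossible), and then propagates that single $s^*$ forward to all $s\in[s^*,2\epsilon]$ via the displacement bound. You instead fix $s$ and directly identify a removal event at some time $T=T(s)\le s$ at which a good ancestor was killed; since the killed particle was the farthest blue one, this pins $M\uppar N_{T-}\le r+\tfrac13\epsilon^{1/3}$, and the rest of the propagation is the same. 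Both rely on the same three ingredients: the count $Z_\epsilon(r)>N$, the displacement hypothesis, and the characterisation \eqref{eq:XNcontains} of $\setN$. The paper's version is slightly more economical because one $s^*$ serves all $s$; yours is more constructive in that it names the killing event. Two small remarks on your write-up: the phrase ``by choice of $T$'' is not what makes $w$ good---$w$ is good simply because it is an ancestor of a good particle, so \emph{any} such removal time would do, and the ``largest'' qualifier is unnecessary. And the argument can be shortened: once some good particle $v\notin\setN_s$, the characterisation \eqref{eq:XNcontains} directly gives a $T\le s$ with $M\uppar N_T<\|X^+_v(T)\|<r+\tfrac13\epsilon^{1/3}$, without dwelling on the mechanics of which particle was killed.
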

\begin{proof}
Assume the hypotheses hold. Then we must have
 $$\exists s^*\in[0,\epsilon] :\quad M\uppar N_{s^*}\le
r+\tfrac13\epsilon^{1/3}.$$
Indeed, if we had $M\uppar N_{s}>
r+\tfrac13\epsilon^{1/3}$\ \ $\forall s\in [0,\epsilon]$, then by~\eqref{eq:XNcontainsbis} and using that
$\|B_{u}(s)\| =\| X_{u}^+(s) - X_{u}^+(0)\|\le \frac13\epsilon^{1/3}$,
\begin{align*}
\setN_\epsilon &\supseteq \Big\{  u\in \set+ _\epsilon :\,
\big\|X^+_{u}(s)\big\|\le  r +\tfrac13\epsilon^{1/3}\ \forall s\in[0,\epsilon]\Big\}\\
 &\supseteq \Big\{ u\in \set+ _\epsilon :\,
\big\|X^+_{u}(0)\big\|< r \Big\} ,
\end{align*}
which is impossible as the set on the right hand side has size $Z_\epsilon(r)$ and we assumed that $Z_\epsilon(r)>N$.

Then, for any $s\in[s^*,2\epsilon]$, 
for any particle $u\in \setN_s$, its ancestor at time $s^*$
must have been within distance $M\uppar N _{s^*}$ of the origin, i.e.
$\|X^+_{u}(s^*)\|\le M\uppar N _{s^*}$.
Hence
$$M_{s}\uppar N\le M_{s^*}\uppar N + \max_{u \in \set+_{s}} \big\|B_{u}
(s)-B_{u}(s^*)\big\|\le \Big(r+\tfrac13\epsilon^{1/3}\Big)+
2\cdot \tfrac13\epsilon^{1/3},$$
which completes the proof.
\end{proof}

Recall from the definition of $\Gamma$ in \eqref{Gammaxcdef} that for $r>0$ and $\delta \in [0,1)$, $\mathcal
X\in\Gamma(r,1-\delta)$ means that at least a fraction $1-\delta$ of the $N$
particles of the vector $\mathcal X$ are in $\B(r)$, and, in particular, for $t\ge 0$,
\begin{equation}\label{recallGamma}
X\uppar N(t)\in\Gamma(r,1-\delta)\qquad\Leftrightarrow\qquad
F\uppar N(r,t)\ge1-\delta.
\end{equation}

\begin{lem}\label{Zbig}
There exists a constant $c_4>0$ such that for $r>0$ and $\epsilon \in (0,1)$, if
 $\mathcal X\in\Gamma(r,1-\tfrac 14 \epsilon )$
then
$$\P_{\mathcal X}\big(Z_\epsilon(r)\le N\big)\le e^{-c_4 \epsilon N} .$$ 
\end{lem}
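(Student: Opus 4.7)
The plan is to use the branching structure of the BBM to stochastically dominate $Z_\epsilon(r)$ by a deterministic baseline plus a binomial random variable, and then apply a standard Chernoff bound. Let me elaborate.

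First, let $m = |\mathcal X \cap \B(r)|$. By hypothesis $\mathcal X \in \Gamma(r,1-\tfrac14\epsilon)$, so $m \ge (1-\tfrac14\epsilon)N$ and in particular $N - m \le \tfrac14 \epsilon N$. Under $\P_{\mathcal X}$, label the initial particles sitting in $\B(r)$ by $i = 1,\dots,m$, and for each such particle let $n_i$ denote the number of its descendants in the BBM at time $\epsilon$, so that $Z_\epsilon(r) = \sum_{i=1}^m n_i$. The $n_i$ are i.i.d.\ (the subtrees rooted at distinct initial particles are independent). The key observation is that if $T_i$ denotes the first branching time in the subtree rooted at particle $i$, then $T_i \sim \mathrm{Exp}(1)$ and
\[
n_i \ge 1 + \indic{T_i \le \epsilon},
\]
since whenever the root has branched by time $\epsilon$ its two offspring have not themselves been killed by time $\epsilon$ in the BBM (the BBM has no selection).

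Summing over $i$, this gives $Z_\epsilon(r) \ge m + B$ where $B := \sum_{i=1}^m \indic{T_i \le \epsilon}$ has the $\mathrm{Bin}(m, 1-e^{-\epsilon})$ distribution. Consequently
\[
\P_{\mathcal X}\big(Z_\epsilon(r) \le N\big) \le \P\big(B \le N - m\big) \le \P\big(B \le \tfrac14 \epsilon N\big).
\]
Using $1-e^{-\epsilon} \ge \tfrac12 \epsilon$ for $\epsilon\in(0,1)$, the mean of $B$ satisfies
\[
\mu := m(1-e^{-\epsilon}) \ge (1-\tfrac14\epsilon)N \cdot \tfrac12 \epsilon \ge \tfrac38 N\epsilon,
\]
so the target $\tfrac14 \epsilon N$ lies at most a factor $\tfrac23$ of $\mu$, i.e.\ the deviation below the mean is at least $\tfrac18 N \epsilon$.

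The conclusion now follows from the standard lower-tail Chernoff bound $\P(B \le (1-\delta)\mu) \le e^{-\delta^2 \mu/2}$ applied with $\delta = 1/3$: this yields
\[
\P_{\mathcal X}\big(Z_\epsilon(r) \le N\big) \le \exp\!\big(-\tfrac{1}{18}\mu\big) \le \exp\!\big(-\tfrac{1}{48} N \epsilon\big),
\]
so the lemma holds with any $c_4 \le 1/48$. I do not foresee a substantive obstacle; everything reduces to the (independent) first-branching-time trick and a routine binomial concentration estimate. The only mild care needed is to verify the inequalities $1-e^{-\epsilon} \ge \epsilon/2$ and $N - m \le \tfrac14 \epsilon N$ hold uniformly in $\epsilon \in (0,1)$, so that the Chernoff exponent scales linearly in $\epsilon N$ rather than $\epsilon^2 N$.
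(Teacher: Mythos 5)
Your proof is correct. It follows the same broad strategy as the paper — lower bound $Z_\epsilon(r)$ by the number $m \ge (1-\tfrac14\epsilon)N$ of initial particles in $\B(r)$ plus a nonnegative random variable with mean of order $N\epsilon$, then apply an exponential lower-tail bound — but the dominating random variable differs. The paper observes that the total number of branching events in the subtrees rooted in $\B(r)$ up to time $\epsilon$ stochastically dominates a Poisson random variable $\xi$ with mean $N(1-\tfrac14\epsilon)\epsilon$ (since the total branching rate is always at least $m$), writes $Z_\epsilon(r)\ge N(1-\tfrac14\epsilon)+\xi$, and bounds $\P(\xi\le \tfrac14\epsilon N)$ by optimizing $\E[e^{-c\xi}]$. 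You instead keep only the first branching event per subtree, obtaining $Z_\epsilon(r)\ge m+B$ with $B\sim\mathrm{Bin}(m,1-e^{-\epsilon})$, and apply the standard multiplicative Chernoff lower-tail bound. Your binomial version is a bit more conservative (you throw away all but the first branching) and yields a smaller but still uniform constant $c_4$, while being arguably easier to justify since it only invokes independence of the $m$ root lineages and the exponential law of the first split time; the paper's Poisson domination buys a tighter constant at the cost of a slightly less elementary stochastic-domination argument. Both give the required $e^{-c_4\epsilon N}$ decay, and your auxiliary inequalities ($1-e^{-\epsilon}\ge\epsilon/2$ on $(0,1)$, $N-m\le\tfrac14\epsilon N$, the Chernoff step with $\delta=1/3$) all check out.
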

\begin{proof}
Note that since each particle in the BBM branches independently at rate
1, and since we assumed that the number $|\mathcal X \cap \B(r)|$ of initial particles in $\B(r)$ is at least $N(1-\frac 14 \epsilon)$, we have
$$Z_\epsilon (r) \ge N(1-\tfrac 14 \epsilon) + \xi$$
where $\xi$ is a Poisson random variable with mean $N(1-\frac 14 \epsilon)\epsilon$.
Hence, for any $c>0$, by Markov's inequality,
\begin{align*}
\P_{\mathcal X}\big(Z_\epsilon (r) \leq N\big) & \le \P(\xi \le \tfrac 14 \epsilon N)
\\& = \P \big( e^{-c\xi} \ge e^{-\frac 14 c\epsilon N}\big)
\\& \le e^{\frac 14 c\epsilon N}\E\big[ e^{-c\xi} \big]
\\& = e^{\frac 14 c\epsilon N +N(1-\frac 14 \epsilon)\epsilon(e^{-c}-1)}
\\&\le e^{N(1-\frac 14 \epsilon)\epsilon(\frac 12 c +e^{-c}-1)},
\end{align*}
where we used  $\frac 14 \epsilon \leq \frac 12 \epsilon(1-\frac 14 \epsilon)$ in the last line.
Fixing $c>0$ sufficiently small that $\tfrac 12 c+e^{-c}-1=-c'<0$, it follows that
\begin{align*}
\P_{\mathcal X}\big(Z_\epsilon (r) \leq N\big)
&\leq e^{-c'\epsilon N(1-\frac 14 \epsilon)}\leq e^{-\frac 12 c' \epsilon N}. \qedhere
\end{align*}
\end{proof}

\begin{lem}\label{lemAsup}
Let $\epsilon=N^{-b}$ for some $b\in (0,1/2)$.
Then for $N$ sufficiently large, for $r>0$,
if $\mathcal X\in \Gamma(r,1-\frac 14 \epsilon)$ then 
\[ \P_{\mathcal X}\bigg(\sup_{s \in [ \epsilon,2\epsilon ]} M\uppar N_{s}>r
+\epsilon^{1/3}\bigg)\le   e^{-\epsilon^{-1/4}}.
\]
\end{lem}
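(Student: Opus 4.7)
The plan is to combine three ingredients: the deterministic reduction of Lemma~\ref{ZbigBsmall}, the population lower bound of Lemma~\ref{Zbig}, and a many-to-one-plus-reflection estimate controlling displacements of all BBM lineages over time $[0,2\epsilon]$.

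First, I would introduce the two events appearing in the hypotheses of Lemma~\ref{ZbigBsmall},
\[
A_1 = \{Z_\epsilon(r) > N\}, \qquad A_2 = \Big\{ \max_{u \in \set+_{2\epsilon}} \sup_{s \in [0, 2\epsilon]} \|B_u(s)\| \le \tfrac{1}{3} \epsilon^{1/3}\Big\},
\]
so that on $A_1\cap A_2$ we have $M\uppar N_s \le r+\epsilon^{1/3}$ for every $s\in[\epsilon,2\epsilon]$. A union bound then reduces the problem to bounding $\P_{\mathcal X}(A_1^c)$ and $\P_{\mathcal X}(A_2^c)$ separately. Since $\mathcal X\in\Gamma(r,1-\tfrac14\epsilon)$, Lemma~\ref{Zbig} immediately gives $\P_{\mathcal X}(A_1^c)\le e^{-c_4\epsilon N}=e^{-c_4 N^{1-b}}$.

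Next, to control $\P_{\mathcal X}(A_2^c)$ I would use Markov's inequality together with the many-to-one lemma for the standard BBM: for each $u\in\set+_{2\epsilon}$ the path $(B_u(s))_{s\in[0,2\epsilon]}$ is distributed as a $d$-dimensional Brownian motion with diffusivity $\sqrt 2$ started at the origin, so
\[
\E_{\mathcal X}\bigg[\sum_{u\in\set+_{2\epsilon}}\indic{\sup_{s\le 2\epsilon}\|B_u(s)\|>\frac{1}{3}\epsilon^{1/3}}\bigg] = Ne^{2\epsilon}\,\P_{\!0}\Big(\sup_{s\le 2\epsilon}\|B_s\|>\tfrac{1}{3}\epsilon^{1/3}\Big).
\]
The supremum probability on the right is handled by a union bound over the $d$ coordinates combined with the usual reflection-principle Gaussian tail, yielding a bound of the form $C_d\exp(-c_d\,\epsilon^{-1/3})$ for constants $C_d,c_d>0$. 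Hence $\P_{\mathcal X}(A_2^c)\le C_d N e^{2\epsilon}\exp(-c_d\,\epsilon^{-1/3})$.

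Finally, I would compare exponents. With $\epsilon=N^{-b}$ and $b\in(0,1/2)$, the two bounds decay like $\exp(-c_4 N^{1-b})$ and $\exp(-c_d N^{b/3})$ (times the polynomial factor $C_d N e^{2\epsilon}$), while the target is $\exp(-N^{b/4})$. Since $1-b>b/4$ (from $b<4/5$, which is implied by $b<1/2$) and $b/3>b/4$, both contributions are dominated by $\tfrac12 e^{-N^{b/4}}=\tfrac12 e^{-\epsilon^{-1/4}}$ once $N$ is sufficiently large, and their sum gives the required bound $e^{-\epsilon^{-1/4}}$. The only minor subtlety worth flagging is the choice of scales: the spatial threshold $\epsilon^{1/3}$ produces a Gaussian tail of order $e^{-c\epsilon^{-1/3}}$, while the final target decay $e^{-\epsilon^{-1/4}}$ is deliberately coarser so as to absorb the factor of $N$ from summing over all lineages in the many-to-one step.
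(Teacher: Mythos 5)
Your proposal is correct and follows essentially the same route as the paper: apply Lemma~\ref{ZbigBsmall} to reduce to bounding the probabilities of $\{Z_\epsilon(r)\le N\}$ and of some lineage making a large excursion, use Lemma~\ref{Zbig} for the former, and a first-moment (Markov) bound via the many-to-one lemma plus the reflection principle and a Gaussian tail for the latter, then compare exponents using $\epsilon=N^{-b}$, $b\in(0,1/2)$. The details and the final exponent bookkeeping match the paper's argument.
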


\begin{proof}
Take $\mathcal X\in \Gamma(r,1-\frac 14 \epsilon)$.
By Lemma~\ref{ZbigBsmall}, observe that
\begin{equation}\label{PPP}
\begin{aligned}
\P_{\mathcal X}\bigg( \sup_{s \in [ \epsilon,2\epsilon ]} M\uppar N_{s}>r
+\epsilon^{1/3}\bigg)
\le \quad& \P_{\mathcal X}\big( Z_\epsilon(r) \leq N \big)
\\+&
\P_{\mathcal X}\bigg( \exists u\in \set+_{2\epsilon} :
\sup_{s\in [0,2\epsilon]}\|B_{u}(s)\|> \tfrac 13
\epsilon^{1/3}\bigg).
\end{aligned}
\end{equation}
By
Lemma~\ref{Zbig}, 
the first term on the right hand side is bounded by $e^{-c_4 \epsilon N}$. We focus on the second term. By the many-to-one lemma, recalling that we let $(B_s)_{s\geq 0}$ denote a $d$-dimensional Brownian motion with diffusivity $\sqrt 2$,
$$\P_{\mathcal X}\bigg( \exists u\in \set+_{2\epsilon} :
\sup_{s\in [0,2\epsilon]}\|B_{u}(s)\|> \tfrac 13
\epsilon^{1/3}\bigg) \le Ne^{2\epsilon}\,\P_0 \bigg(\sup_{s\in 
[0,2\epsilon]}\|B_s\|> \tfrac 13
\epsilon^{1/3}\bigg).$$
Letting $\xi_{1,s},\ldots,\xi_{d,s}$ denote the $d$ coordinates of $B_s$, which are themselves independent one-dimensional Brownian motions,
\begin{equation}
\begin{aligned}
\P_0 \left(\sup_{s\in [0,2\epsilon]}\|B_s\|> \tfrac 13
\epsilon^{1/3}\right)
&=\P_0 \left(\sup_{s\in [0,2\epsilon]}\big(\xi_{1,s}^2+\cdots+\xi_{d,s}^2\big)>
\tfrac19\epsilon^{2/3}\right) \label{bsplit1}
\\&\le\P_0 \left(\sup_{s\in [0,2\epsilon]}\xi_{1,s}^2> \tfrac1{9d}\epsilon^{2/3}
\text{ or }\ldots\text{ or }\sup_{s\in [0,2\epsilon]}\xi_{d,s}^2 >
\tfrac1{9d}\epsilon^{2/3}\right)
\\&\le d\, \P_0 \left(\sup_{s\in [0,2\epsilon]}\big|\xi_{1,s}\big|
>\tfrac1{3\sqrt d}\epsilon^{1/3}\right)
\\&\le 4d\,\P_0 \left(\xi_{1,2\epsilon}
>\tfrac1{3\sqrt d}\epsilon^{1/3}\right) 
\\&
\le 4d\exp\left({-\frac{\epsilon^{-1/3}}{72d}}\right),
\end{aligned}
\end{equation}
where the fourth line follows by the reflection principle, and the last line by a Gaussian tail bound.

By~\eqref{PPP} we now have that 
\[
\P_{\mathcal X}\bigg( \sup_{s \in [ \epsilon,2\epsilon ]} M\uppar N_{s}>r
+\epsilon^{1/3}\bigg)
\le 
e^{-c_4 \epsilon N}+N e^{2\epsilon} \cdot 4d \exp\left({-\frac{\epsilon^{-1/3}}{72d}}\right)
\le e^{-\epsilon^{-1/4}}
\]
for $N$ sufficiently large, since $\epsilon = N^{-b}$ and $b\in (0,1/2)$.
\end{proof}
We can now complete the proof of Proposition~\ref{prop:bound on M}.
Recall that we assume that the $N$-BBM is started from $N$ i.i.d.\@ particles
with distribution given by some $\mu_0$, and that
$(u,R)$ denotes the solution to the free boundary
problem~\eqref{pbu_probpaper} with initial condition $\mu_0$.
As in Lemma~\ref{cor:iidhydro}, let $v$ denote the solution of~\eqref{pbv_probpaper} with initial condition $v_0(r)=\mu_0(\B(r))$, and recall from Section~\ref{subsec:1d} that $v(r,t)=\int_{\B(r)}u(x,t)\, \diffd x$ and so, in particular, $v(R_t,t)=1$ for $t>0$.

Take $c_3\in (0,1)$ as in Lemma~\ref{cor:iidhydro}, and let $\epsilon=N^{-c_3/2}$.
Then for $T>0$, by a union bound,
\begin{align*}
&\P\left( \exists t\in [2\epsilon,T]:M\uppar N_t>R_{\epsilon(\lfloor
t /\epsilon
\rfloor -1)}+\epsilon^{1/3}\right)\\
&\quad \le \sum_{k=1}^{\lfloor T/\epsilon\rfloor-1}
\P\left( \sup_{s\in[\epsilon,2\epsilon]} M\uppar N_{\epsilon k +s}>R_{\epsilon
k}+\epsilon^{1/3}\right)
\\
&\quad \le \sum_{k=1}^{\lfloor T/\epsilon\rfloor-1}
\bigg(
\P\big(E_{\epsilon k}^c\big)+ \P\bigg( E_{\epsilon k};\sup_{s\in[\epsilon,2\epsilon]} M\uppar N_{\epsilon k 
+s}>R_{\epsilon
k}+\epsilon^{1/3}\bigg) \bigg).
\end{align*}
The above is of course valid for any choice of events $E_t$ for each $t>0$ but,
in order to use Lemma~\ref{lemAsup}, we
let $E_t=\big\{X\uppar N(t) \in\Gamma(R_t,1-\frac 14 \epsilon)\big\}$.
Then for $N$ sufficiently large that $\frac 14 \epsilon > e^{2T}N^{-c_3}$,
recalling the meaning of $\Gamma$ in~\eqref{recallGamma}
and since $v(R_t,t)=1$, for $t\in (0,T]$,
\begin{align*}
\P\big(E_t^c\big)=\P\big(X\uppar N(t) \not\in\Gamma(R_t,1-\tfrac 14 \epsilon)
\big)
&=
\P\big(F\uppar N(R_t, t) < 1-\tfrac 14 \epsilon \big)\\
&\le \p{\sup_{r\ge 0}|F\uppar N(r,t)-v(r,t)| \ge e^{2T}N^{-c_3}}\\
&\le e^T N^{-1-c_3}
\end{align*}
 by Lemma~\ref{cor:iidhydro}.
For $N$ sufficiently large, 
for $k\in \N$,
by
Lemma~\ref{lemAsup} and the Markov property at time $\epsilon k$ we have
\[
\P\bigg( E_{\epsilon k};\sup_{s\in[\epsilon,2\epsilon]} M\uppar N_{\epsilon k 
+s}>R_{\epsilon
k}+\epsilon^{1/3}\bigg) \le e^{-\epsilon^{-1/4}}.
\]
Therefore
$$
\P\left( \exists t\in [2\epsilon,T]:M\uppar N_t>R_{\epsilon(\lfloor
t /\epsilon
\rfloor -1)}+\epsilon^{1/3}\right)
\le \lfloor T/\epsilon\rfloor\big(e^T N^{-1-c_3}+e^{-\epsilon^{-1/4}}\big)\le N^{-1-\frac 13 c_3}$$
for $N$ sufficiently large. For $\eta \in (0,T)$,
since $(R_t)_{t\in [\eta,T]}$ is continuous (by Theorem~\ref{thm:exists u_probpaper}), for $N$ sufficiently large,
\begin{equation}\label{continuityofR}
R_{\epsilon(\lfloor
t /\epsilon
\rfloor -1)}+\epsilon^{1/3}
\leq R_t +\eta
\quad \forall t\in [\eta,T].
\end{equation}
Therefore for $N$ sufficiently large, 
$$
\p{\exists t\in [\eta,T]:M\uppar N_t>R_t+\eta }\leq N^{-1-\frac 13 c_3},
$$
which completes the proof of Proposition~\ref{prop:bound on M}.

%%%%%%%%%%%%%%%%%%%%%%%%%%%%%%%
\section{Proof of Theorem \ref{thm:NBBM}: Hydrodynamic limit result for
\texorpdfstring{$u$}{u}}
\label{sec:hydrod}
%%%%%%%%%%%%%%%%%%%%%%%%%%%%%%%

First we notice that it is sufficient to prove that
there exists $c_5>0$ such that for any $t>0$, $A\subseteq
\R^d$ measurable and $\delta>0$, for $N$ sufficiently large,
\begin{equation} \label{eq:FAclaim}
\P\left( \mu\uppar N(A,t) - \int_A u(x,t)\,\diffd
x\ge\delta\right) \leq N^{-1-c_5}.
\end{equation}
Indeed, since
$\mu\uppar N(A,t) + \mu\uppar N\big({\R^d\backslash A},t\big)=1$ and
$\int_A u(x,t)\,\diffd x +
\int_{\R^d\backslash A}u(x,t)\,\diffd x=\int_{\R^d}u(x,t)\,\diffd x=1$, 
\begin{equation} \label{eq:FAcomplement}
\P\left( \mu\uppar N(A,t) - \int_A u(x,t)\,\diffd
x\le-\delta\right)  =
\P\left( \mu\uppar N\big({\R^d\backslash A},t\big) - \int_{\R^d\backslash A} u(x,t)\,\diffd
x\ge\delta\right).
\end{equation}
Hence it follows from~\eqref{eq:FAclaim} that for $t>0$, $A\subseteq \R^d$ measurable and $\delta>0$, for $N$ sufficiently large,
\[
\P\left( \left| \mu\uppar N(A,t) - \int_A u(x,t)\,\diffd
x\right| \ge\delta\right) \leq 2N^{-1-c_5},
\]
and so by Borel-Cantelli, $\mu\uppar N (A,t)\to \int_A u(x,t)\,\diffd x$ almost surely as $N\to \infty$.
Moreover, for $t>0$ and $\delta>0$, let
$\delta'=1-\int_{\B(R_t-\delta)}u(x,t)\,\diffd x>0$ by Theorem~\ref{thm:exists u_probpaper}.
Then 
\begin{align*}
\p{M\uppar N_t<R_t -\delta}
&=\P\Big(\mu\uppar N\big({\B(R_t-\delta)},t\big)=1\Big)
\\&=\P\bigg(\mu\uppar
N\big({\B(R_t-\delta)},t\big)-\int_{\B(R_t-\delta)}u(x,t)\,\diffd x\ge
\delta'\bigg)
\le N^{-1-c_5}
\end{align*}
for $N$ sufficiently large, by~\eqref{eq:FAclaim}.
Also, by Proposition~\ref{prop:bound on M} with $\eta=\min(\delta,t)$, for $N$ sufficiently large,
\[
\p{M\uppar N_t>R_t +\delta}\le N^{-1-c_2}.
\]
Therefore, by Borel-Cantelli, $M\uppar N_t \to R_t$ almost surely as $N\to \infty$.

It now remains to prove~\eqref{eq:FAclaim}. 
Let $(X^+(t),t\ge 0)$ be a BBM with the same initial particle distribution as the $N$-BBM,
i.e.~such that $X^+(0)=(X^+_i(0))_{i=1}^N$, where $(X^+_i(0))_{i=1}^N$ are i.i.d.~with distribution given by $\mu_0$.
Recall the coupling described in Section \ref{sec:notations} between the $N$-BBM $X\uppar
N$ and the BBM $X^+$ such that under the coupling, for all $t\ge0$,
\[ X\uppar N(t) \subseteq X^+(t). \]

Take $t>0$ and $\eta\in (0,t)$.
We let $\mathcal C_{\eta,t}$ denote the set of locations of particles in
the BBM (without killing) at time $t$ whose ancestors at times
$s\in[\eta,t]$
were always within distance $R_s+\eta$ of the origin:
$$
\mathcal C_{\eta,t}
=\Big\{ X_u^+(t) : u \in \set+_t,\ \|X_{u}^+(s)\|
\le R_s +\eta\ \forall s\in[\eta,t]\Big\}.
$$
Notice that 
if $M\uppar N_s\leq R_s+\eta\ \forall s\in [\eta,t]$, then, by~\eqref{eq:XNcontains}, almost surely
$$
X\uppar N(t)
\subseteq \mathcal C_{\eta,t}.
$$
Therefore, for $A\subseteq \R^d$ measurable and $\delta>0$,
\begin{align} \label{eq:Fudelta}
\P\left( \mu\uppar N(A,t) - \int_{A} u(x,t)\,\diffd
x \ge \delta\right)
&\le\P\left( \exists s\in[\eta,t]: M\uppar N_s> R_s+\eta\right) \notag 
\\&\quad +\ \P \left( \frac1N \Big|\mathcal C_{\eta,t}\cap A\Big| -\int_A 
u(x,t)\,\diffd
x \ge \delta\right) \notag \\
&\le N^{-1-c_2}+\P \left(\frac1N \Big|\mathcal C_{\eta,t}\cap A\Big| -\int_A 
u(x,t)\,\diffd
x \ge \delta\right)
\end{align}
for $N$ sufficiently large (depending on $\eta$ and $t$)
by
Proposition~\ref{prop:bound on M}. We now focus on the second term on the right hand side.

\begin{lem}\label{lem:C to u}For any $t>0$ and $A\subseteq \R^d$ measurable,
$$\lim_{\eta\searrow0}\E\bigg[\frac1N\big| \mathcal C_{\eta,t}\cap
A\big|\bigg]= \int_A u(x,t)\,\diffd x\qquad\text{uniformly in $N$}.$$
\end{lem}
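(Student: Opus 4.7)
The plan is to apply the many-to-one lemma to produce a formula for $\E[\tfrac{1}{N}|\mathcal C_{\eta,t}\cap A|]$ that is manifestly independent of $N$ (so that the claimed uniformity in $N$ is automatic), then take the monotone limit $\eta\searrow 0$, and finally identify the resulting expression with $\int_A u(x,t)\,\diffd x$ through the representation~\eqref{udef_probpaper}.

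For the first step, since the underlying BBM starts from $N$ i.i.d.\@ particles with distribution $\mu_0$, linearity of expectation, symmetry across subtrees, and the many-to-one lemma (\cite{HarrisRoberts17}) give
\[
\frac{1}{N}\E\big[|\mathcal C_{\eta,t}\cap A|\big]
= e^t \int_{\R^d}\mu_0(\diffd x)\,\P_{\!x}\Big(B_t\in A,\ \|B_s\|\le R_s+\eta\ \ \forall s\in[\eta,t]\Big),
\]
where $(B_s)_{s\ge 0}$ is a Brownian motion with diffusivity $\sqrt 2$. The right-hand side does not involve $N$, so uniformity in $N$ is trivial. As $\eta\searrow 0$ the events above form a decreasing family (the barrier $R_s+\eta$ tightens and the interval $[\eta,t]$ widens), and dominated convergence, first on path space and then in $\mu_0$ (integrand bounded by $1$), yields
\[
\lim_{\eta\to 0}\frac{1}{N}\E\big[|\mathcal C_{\eta,t}\cap A|\big]
= e^t\int \mu_0(\diffd x)\,\P_{\!x}\Big(B_t\in A,\ \|B_s\|\le R_s\ \ \forall s\in(0,t]\Big).
\]

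It then remains to match this limit with $\int_A u(y,t)\,\diffd y=e^t\int\mu_0(\diffd x)\,\rho_t(x,A)$, where by~\eqref{udef_probpaper} we have $\rho_t(x,A)=\P_{\!x}\big(B_t\in A,\ \|B_s\|<R_s\ \forall s\in(0,t)\big)$. Denoting by $\tilde E$ the event appearing in the limit above and by $E$ the event defining $\rho_t(x,A)$, continuity of paths and of $R$ give $E\subseteq\tilde E$, while $\tilde E\setminus E$ is contained in $\{\exists s_0\in(0,t]:\|B_{s_0}\|=R_{s_0},\ \|B_s\|\le R_s\ \forall s\in(0,t]\}$. The case $s_0=t$ is $\P_{\!x}$-null because $B_t$ has a Gaussian density. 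For $s_0\in(0,t)$, setting $\tau:=\inf\{s>0:\|B_s\|\ge R_s\}$, on $\{\tau<t\}$ continuity forces $\|B_\tau\|=R_\tau$; the strong Markov property at $\tau$, combined with the immediate oscillation of the Brownian norm process near $R_\tau$ (Blumenthal's $0$-$1$ law) and continuity of $R$ at $\tau$, forces $\|B_{\tau+u}\|>R_{\tau+u}$ for some arbitrarily small $u>0$ with probability one, contradicting the non-exceedance constraint defining $\tilde E$. Hence $\P_{\!x}(\tilde E\setminus E)=0$ and the two events have the same probability.

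The main obstacle is this final null-set argument: converting the non-strict, closed-interval barrier condition produced by the monotone limit into the strict, open-interval condition defining $\rho_t$. It relies on a strong Markov argument at the first touching time of the barrier, using continuity of $R$ (Theorem~\ref{thm:exists u_probpaper}) and the oscillation of Brownian motion near any point. The other steps---many-to-one, dominated convergence, and reading off the answer from~\eqref{udef_probpaper}---are routine.
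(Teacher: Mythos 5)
Your overall plan coincides with the paper's through the first two steps: the many-to-one computation that removes $N$, and the monotone/dominated-convergence passage to $\eta\searrow 0$, landing on the identity \eqref{eq:touchdontcross} as the remaining thing to prove. Where you diverge, and where there is a genuine gap, is the null-set argument for \eqref{eq:touchdontcross}.

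Your argument applies the strong Markov property at $\tau=\inf\{s>0:\|B_s\|\ge R_s\}$ and invokes ``immediate oscillation'' of the Bessel process together with mere continuity of $R$ to conclude $\|B_{\tau+u}\|>R_{\tau+u}$ for arbitrarily small $u>0$ a.s. This does not follow from continuity of $R$ alone. Locally after $\tau$ the radial process behaves like $R_\tau + W_u + O(u)$ with $W$ a one-dimensional Brownian motion, and the law of the iterated logarithm gives $\limsup_{u\to 0} W_u/\sqrt{2u\log\log(1/u)} = 1$. So if $R$ happened to have modulus of continuity $\sqrt{3u\log\log(1/u)}$ (or worse) near the random time $\tau$, the post-$\tau$ path would stay strictly below the barrier for a while with positive conditional probability; Blumenthal's $0$-$1$ law then gives probability one of \emph{not} immediately exceeding. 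The paper only records that $t\mapsto R_t$ is continuous and finite (Theorem~\ref{thm:exists u_probpaper}); no H\"older or $C^1$ regularity is asserted, so you cannot rule out this scenario. Your argument would go through if one first proved, say, that $R$ is locally $\alpha$-H\"older for some $\alpha>1/2$, but that is an additional regularity input, not available here.

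The paper sidesteps the regularity of $R$ entirely. It conditions on the terminal point $B_t=z$, writes the conditioned path as a Brownian bridge from $y$ to $0$ plus the deterministic slant $\frac{s}{t}z$, and observes that for a fixed bridge path $\xi$ and fixed direction $\mathbf e$, the set of radii $r$ for which $\xi_s+\frac{s}{t}r\mathbf e$ touches but never crosses $R$ on $(0,t)$ has at most two elements (by elementary geometry, turning the touching condition into a degenerate inequality in $r$). By Fubini this forces the touching event to carry zero Lebesgue density in the endpoint $z$, hence zero probability after averaging against the Gaussian heat kernel. This is the argument you are missing, and it buys exactly what you need: it uses no smoothness of $R$ beyond continuity.
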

\begin{proof}
We claim that for $y\in \R^d$,
\begin{equation} \label{eq:touchdontcross}
\P_y(B_t \in A,  \|B_s\|\le R_s\,\,\forall s\in(0,t]\big) = \P_y(B_t \in A, \|B_s\|< R_s\,\,\forall s\in(0,t)\big).
\end{equation}
(In words: the probability that the Brownian motion touches  the moving
boundary $R$ at a positive time without crossing it is zero.)
We shall begin by showing that the lemma follows from~\eqref{eq:touchdontcross}, and then prove the claim~\eqref{eq:touchdontcross}.

For $\eta \in (0,t)$, by the many-to-one lemma,
and since at time $0$ the BBM consists of $N$ particles with locations which are random variables with distribution $\mu_0$,
$$\begin{aligned}
\E\Big[\big| \mathcal C_{\eta,t}\cap
A\big|\Big]
&=N e^t\int_{\R^d} \mu_0(\diffd y)\, \P_y\big( B_t\in A,\ \|B_s\|\le
R_s+\eta \  \forall s\in [\eta,t]\big).
\end{aligned}$$
By dominated convergence, and uniformly in $N$,
$$\begin{aligned}
\lim_{\eta\searrow0}\E\bigg[\frac1N\big| \mathcal C_{\eta,t}\cap
A\big|\bigg]
&= e^t\int_{\R^d} \mu_0(\diffd y) \P_y\big(B_t\in A,\, \|B_s\|\le
R_s \  \forall s\in (0,t]\big)
\\
&=e^t\int_{\R^d} \mu_0(\diffd y) \P_y\big( B_t\in A,\, \|B_s\|<
R_s \  \forall s\in (0,t)\big)\\
&=\int_A u(x,t)\, \diffd x,
\end{aligned}$$
where the second equality holds by~\eqref{eq:touchdontcross} and the last equality follows from~\eqref{udef_probpaper}.

It remains to prove~\eqref{eq:touchdontcross}; we shall use the following claim.
Suppose $(\xi_s)_{s\in [0,t]}$ is a continuous path in $\R^d$ with $\xi_0=y$ and $\xi_t=0$,
and suppose $\mathbf e \in \R^d$ is a unit vector.
We claim that there are at most two values of $r\in \R$ such that
\begin{equation} \label{eq:hitslant}
\min_{s\in (0,t)} \big(R_s-\|\xi_s +\tfrac s t r \mathbf e\|\big)=0,
\end{equation}
which we write to mean that the min exists in $(0,t)$ and is equal to 0; in
other words,
$R_s\ge\|\xi_s +\tfrac s t r \mathbf e\|\ \forall
s\in(0,t)$ and $\exists s\in(0,t)$ such that $R_s=\|\xi_s +\tfrac
s t r \mathbf e\|$.
Indeed, we have that
\[
\big\|\xi_s +\tfrac s t r \mathbf e\big\|^2
=\big\|\xi_s -(\xi_s\cdot \mathbf e )\mathbf e\big\|^2 +\big(\xi_s\cdot
\mathbf e +\tfrac st r\big)^2,
\]
and so if~\eqref{eq:hitslant} holds,  then the inequality $R_s\ge \|\xi_s +\tfrac
s t r \mathbf e\|$ implies that, for each $s\in (0,t)$,
\begin{equation} \label{eq:rineqs}
 -\tfrac ts \Big((R_s^2 -\|\xi_s-(\xi_s\cdot \mathbf e ) \mathbf e\|^2 )^{1/2}+\xi_s \cdot \mathbf e \Big)\le 
r\le  \tfrac ts \Big((R_s^2 -\|\xi_s-(\xi_s\cdot \mathbf e ) \mathbf e\|^2 )^{1/2}-\xi_s \cdot \mathbf e \Big).
\end{equation}
Moreover, for any value of $s\in(0,t)$ such that $R_s=\|\xi_s +\tfrac
s t r \mathbf e\|$, 
one of the two inequalities in~\eqref{eq:rineqs} must be an equality.
Therefore
\begin{align*}
r &\in \Big\{ \inf_{s\in (0,t)} \left( \tfrac ts \Big(\big(R_s^2
-\|\xi_s-(\xi_s\cdot \mathbf e ) \mathbf e\|^2 \big)^{1/2}-\xi_s \cdot
\mathbf e \Big) \right),\\
&\qquad \qquad \qquad \sup_{s\in (0,t)} \left( -\tfrac ts \Big(\big(R_s^2
-\|\xi_s-(\xi_s\cdot \mathbf e ) \mathbf e\|^2 \big)^{1/2}+\xi_s \cdot
\mathbf e\Big ) \right) \Big\},
\end{align*}
which establishes the claim that~\eqref{eq:hitslant} holds for at most two
values of $r$.

Now for $y\in \R^d$, under the probability measure $\P_y$, let $(\xi_s)_{s\in [0,t]}$ denote a $d$-dimensional Brownian bridge with diffusivity $\sqrt 2$ from $y$ to $0$ in time $t$. 
Then
\begin{align*}
&\P_y\Big(
\big\{ \|B_s\|\le R_s\,\,\forall s\in(0,t]\big\}
\cap
\big\{\exists s\in(0,t):\|B_s\|=R_s\big\}
\Big)\\
&\qquad\qquad=\Esub{y}{\P_y\Big(
\big\{ \|B_s\|\le R_s\,\,\forall s\in(0,t]\big\}
\cap
\big\{\exists s\in(0,t):\|B_s\|=R_s\big\} \, \Big| B_t
\Big)}\\
&\qquad\qquad= \int_{\R^d} \diffd z \, \Phi_t(y-z) \P_y \bigg(\min_{s\in (0,t)} (R_s-\|\xi_s +\tfrac s t z\|)=0\bigg)\\
&\qquad\qquad= \E_y \left[\int_{\R^d} \diffd z \, \Phi_t(y-z) \indic{\min_{s\in (0,t)} (R_s-\|\xi_s +\frac s t z\|)=0}\right]
\end{align*}
by Fubini's theorem,
and where $\Phi_t(x)=(4\pi t)^{-d/2}e^{-\|x\|^2/(4t)}$ is the heat kernel.
By~\eqref{eq:hitslant} we have that $\min_{s\in (0,t)} (R_s-\|\xi_s +\tfrac s t z\|)\neq 0$ for almost every $z$, and~\eqref{eq:touchdontcross} follows.
\end{proof}

\begin{lem} \label{lem:dhydro2}
For $N$ sufficiently large, for any $A\subseteq \R^d$ measurable and any $0<\eta <t$,
$$
\E\Bigg[\bigg(\frac1N \big|\mathcal C_{\eta,t}\cap A\big|-\E\left[\frac1N \big|\mathcal C_{\eta,t}\cap A\big|\right]\bigg)^4\Bigg]\le13 e^{4t}N^{-2}.
$$ 
\end{lem}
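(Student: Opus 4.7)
The plan is to mimic the fourth-moment computation used in Lemma~\ref{lem:Ftilde+}, exploiting the fact that in the BBM $X^+$, the subtrees rooted at the $N$ initial particles evolve independently and, crucially, that the initial particles themselves are i.i.d.\ under $\mu_0$.

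First, I would decompose $|\mathcal C_{\eta,t} \cap A|$ as a sum over the $N$ initial particles. For $i\in\{1,\ldots,N\}$, let $\mathcal N^{+,i}_t$ denote the descendants at time $t$ of the $i$-th initial particle, and set
\[
Y_i := \Big|\Big\{u \in \mathcal N^{+,i}_t : X^+_u(t)\in A,\ \|X^+_u(s)\|\le R_s+\eta\ \forall s\in[\eta,t]\Big\}\Big|.
\]
Then $|\mathcal C_{\eta,t}\cap A| = \sum_{i=1}^N Y_i$. Because the initial positions $X^+_i(0)$ are i.i.d.\ under $\mu_0$ and the subtrees evolve independently of each other and of the initial positions, the $Y_i$ are i.i.d.\ (not merely independent). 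This is the key structural observation that differentiates this argument from the one in Lemma~\ref{lem:Ftilde+}, where the initial points were deterministic.

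The second step is to bound moments of $Y_1$. Note that $0\le Y_1\le n_1$, where $n_1 := |\mathcal N^{+,1}_t|$ is the total number of descendants at time $t$ of the first initial particle. Conditionally on $X^+_1(0)$ (and hence unconditionally) $n_1$ has a geometric distribution with parameter $e^{-t}$, so by~\eqref{eq:hydroA} we have $\E[Y_1^4]\le \E[n_1^4]\le 24 e^{4t}$ and $\var(Y_1)\le \E[Y_1^2]\le \E[n_1^2]\le 2 e^{2t}$.

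The third step is the standard expansion of the fourth central moment of a sum of i.i.d.\ random variables: writing $\bar Y_i = Y_i - \E[Y_i]$ and expanding, the only surviving terms in $\E[(\sum_i \bar Y_i)^4]$ are the $N$ diagonal terms $\E[\bar Y_i^4]$ and the $3N(N-1)$ cross terms $\E[\bar Y_i^2]\E[\bar Y_j^2]$. Using the crude bound $\E[\bar Y_1^4]\le 2\E[Y_1^4]$ as in~\eqref{eq:hydroA1} and the variance estimate above, we get
\[
\E\!\left[\Big(\tfrac{1}{N}\!\sum_{i=1}^N \bar Y_i\Big)^4\right] \le \frac{1}{N^4}\Big(48 N e^{4t} + 12 N(N-1) e^{4t}\Big) \le (48 N^{-3} + 12 N^{-2}) e^{4t},
\]
and the claimed bound $13 e^{4t} N^{-2}$ then holds for $N$ sufficiently large. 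There is no real obstacle here; the argument is essentially a verbatim replay of the proof of Lemma~\ref{lem:Ftilde+}, with the i.i.d.\ structure making the independence of the summands explicit and with the survival/membership event baked into the definition of $Y_i$ rather than affecting the moment bounds.
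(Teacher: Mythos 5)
Your proposal matches the paper's proof essentially exactly: the paper also decomposes $|\mathcal C_{\eta,t}\cap A|$ as a sum of the counts $n_{i,A}$ over descendants of each initial particle, notes these are i.i.d., uses the same fourth-central-moment expansion of a sum of independent mean-zero terms, and invokes the identical moment bounds on $n_i$ from the proof of Lemma~\ref{lem:Ftilde+} to reach $48N^{-3}+12N^{-2}\le 13N^{-2}$. (One minor remark: mere independence of the summands suffices for the cross terms to vanish, so the i.i.d.\ structure is a convenient observation rather than an essential change from Lemma~\ref{lem:Ftilde+}, but this does not affect correctness.)
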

\begin{proof}
Recall that we let $X^+(0)=(X^+_i(0))_{i=1}^N$, where $(X^+_i(0))_{i=1}^N$ are i.i.d.~with distribution given by $\mu_0$.
As in the proof of Lemma~\ref{lem:Ftilde+}, denote by $X^{+,i}$ the family
of particles descended from the $i$-th particle in the initial configuration $X^+(0)$. The $X^{+,i}$ form a family of independent BBMs, and for
each $i$ the process $X^{+,i}$ is started from a single particle at location
$X^+_i(0)$.
Fix $0<\eta <t$,  write $n_i=|X^{+,i}(t)|$ for the number of particles descended from
$X^+_i(0)$ at time $t$ and introduce $n_{i,A}$ as the number of
particles in $\mathcal C_{\eta,t}\cap A$ which are descendants of particle $X^+_i(0)$:
$$ n_{i,A} = \Big| \mathcal C_{\eta,t} \cap A\cap X^{+,i}(t)\Big| .$$
Then $|\mathcal C_{\eta,t}\cap A|= \sum_{i=1}^N n_{i,A}$ and $(n_{i,A})_{i=1}^N$ are i.i.d., so
\begin{align*}
&\E\bigg[\Big(\big|\mathcal C_{\eta,t}\cap A\big|-\E\left[ \big|\mathcal C_{\eta,t}\cap A\big|\right]\Big)^4\bigg]\\
&\quad =\E\Bigg[\bigg(\sum_{i=1}^N \big(n_{i,A}-\E[ n_{i,A}]\big)\bigg)^4\Bigg]\\
&\quad =\sum_{i=1}^N \E\Big[\big(n_{i,A}-\E[ n_{i,A}]\big)^4\Big]
+6\sum_{\substack{i,j=1\\i<j}}^N
\var\big(n_{i,A}\big)\var\big(n_{j,A}\big).
\end{align*}
By the same argument as in~\eqref{eq:hydroA1},~\eqref{eq:hydroA2} and~\eqref{eq:hydroA} in the proof of Lemma~\ref{lem:Ftilde+},
\[
\E\Big[\big(n_{i,A}-\E[ n_{i,A}]\big)^4\Big]\le
2\E\big[n_i^4\big]\le 48 e^{4t}
\qquad \text{and}\qquad
\var\big(n_{i,A} \big)\le \E\big[n_i^2\big]\le 2e^{2t}.
\]
Therefore
\begin{align*}
\E\bigg[\bigg(\big|\mathcal C_{\eta,t}\cap A\big|-\E\left[ \big|\mathcal C_{\eta,t}\cap A\big|\right]\bigg)^4\bigg]
&\leq N\cdot 48 e^{4t}+3N(N-1)\big(2e^{2t}\big)^2\\
&\leq 13 e^{4t}N^2
\end{align*}
for $N$ sufficiently large.
\end{proof}

We can now conclude; for fixed $t>0$, $A\subseteq \R^d$ measurable and $\delta>0$, let $\eta>0$ be sufficiently small
that, by Lemma~\ref{lem:C to u},
$$\bigg|\frac1N \E\Big[\big|\mathcal C_{\eta,t}\cap A\big|\Big]-\int_A
u(x,t)\,\diffd x\bigg|<\frac\delta 2\quad\forall N\in \N.$$
Then for $N$ sufficiently large,
$$\begin{aligned}
\P \left( \frac1N \big|\mathcal C_{\eta,t}\cap A\big| -\int_A u(x,t)\,\diffd
x \ge \delta\right)
&\le
\P \left( \frac1N \big|\mathcal C_{\eta,t}\cap A\big| -\frac1N\E\Big[\big|
\mathcal C_{\eta,t}\cap A\big|\Big]>\frac\delta2
\right)
\\ &\le \frac{16}{\delta^4}\cdot 13 e^{4t}N^{-2},
\end{aligned}
$$
by Lemma~\ref{lem:dhydro2} and Markov's inequality.
By~\eqref{eq:Fudelta}, it follows that for $N$ sufficiently large,
\begin{align*}
\p{\mu\uppar N (A,t)-\int_Au(x,t)\,\diffd x \geq \delta}
&\leq N^{-1-c_2}+16 \delta^{-4}\cdot 13 e^{4t}N^{-2}\\
&\leq N^{-1-\frac 12 c_2},
\end{align*}
for $N$ large enough, which establishes~\eqref{eq:FAclaim} and completes the proof of Theorem~\ref{thm:NBBM}.

%%%%%%%%%%%%%%%%%%%%%%%%%%%%%%%
\section{Proof of Proposition~\ref{prop:ssp} and of Theorems~\ref{thm:piNexists},~\ref{thm:sspd} and~\ref{cor:ssp}} \label{sec:ssp}
%%%%%%%%%%%%%%%%%%%%%%%%%%%%%%%

We begin by proving Proposition~\ref{prop:ssp}.
We shall first describe heuristically how the proof works.  Recall from \eqref{Gammaxcdef} that for $K>0$ and $c\in (0,1]$,
\[
\Gamma(K, c) = \left\{  \mathcal X\in (\R^d)^N\,:\, \frac1N \left|\{ i: \| \mathcal X_i \| < K \}\right|\geq c\right\}
\]
is the set of ``good'' initial particle configurations that have at least a fraction $c$ of particles within distance $K$ of the origin (the dependence on $N$ is implicit). Recall from \eqref{cumdistrdefn} that $F\uppar N(\cdot ,t)$ is the empirical cumulative distribution of the particles at time $t$. Let us explain how we go about proving that for $\epsilon>0$ there exist $N_\epsilon,$ $T_\epsilon$ such that
$$
\P_{\mathcal X} \Big(  \sup_{r\ge 0} |F\uppar N(r,t) -V(r)|\ge \epsilon  \Big) < \epsilon
$$ 
holds for any $N\ge N_\epsilon, t\ge T_\epsilon$ and $\mathcal X\in \Gamma(K,c)$.  

Let $K_0=3$, and take $c_0$ as in Proposition~\ref{prop:movemass} and $t_\epsilon=t_\epsilon(c_0,K_0)$ as in Proposition~\ref{prop:conv}. We are going to show that there exists a time $t_a=t_a(c,K)$  independent of $N$ such that, for $t$ large enough,
\begin{align*}
\mathcal X \in\Gamma(K,c) &\implies X\uppar N(t_a)\in \Gamma(K_0,c_0)\ \text{with high probability}\\
&\implies  X\uppar N(t-t_\epsilon)\in \Gamma(K_0,c_0)\ \text{with high probability}\\
&\implies  F\uppar N(\cdot ,t)\text{ is close to }V\ \text{with high probability}.
\end{align*}
For the first step, we use Proposition~\ref{prop:movemass} to choose $t_a$ such that $v\uppar N(r,t_a) \ge 2 c_0 \indic{r\ge K_0}\ \forall r\ge0$, where
$v\uppar N$ denotes the solution of the obstacle problem~\eqref{pbv_probpaper} with initial condition $v_0(r)= F\uppar N(r,0) = N^{-1} |\mathcal X \cap \B(r)|$. Then Proposition~\ref{prop:NBBM1d} tells us that $X\uppar N(t_a)\in \Gamma(K_0,c_0)$ with high probability for $N$ large enough.

The second step will be provided by Proposition~\ref{prop:muNlargetime} below, with Proposition~\ref{prop:muNcjump} as an intermediate result. 

For the third step, let $\tilde v$ denote the solution of the obstacle problem~\eqref{pbv_probpaper} with initial condition $\tilde v_0(r)= F\uppar N(r,t-t_\epsilon)$. From the second step,  $X\uppar N(t-t_\epsilon)\in \Gamma(K_0,c_0)$ with high probability, which is equivalent to $\tilde v_0(K_0)\ge c_0$. Proposition~\ref{prop:conv} then gives us that $\tilde v(\cdot,t_\epsilon)$ is close to $V(\cdot)$. Furthermore, Proposition~\ref{prop:NBBM1d}, together with the Markov property at time $t-t_\epsilon$, implies that $F\uppar N(\cdot,t)$ is close to $\tilde v(\cdot,t_\epsilon)$, which in turn is close to $V(\cdot)$.

We start by proving the two propositions needed for the second step, and then we prove Proposition~\ref{prop:ssp}. The first proposition implies that 
if $X\uppar N(0) \in \Gamma(m,c)$ for some $m$ and $c$,
then, for $j$ large and $t$ not too large, with high probability $X\uppar N(t) \in \Gamma(m+j,c)$.

\begin{prop} \label{prop:muNcjump}
For any $c\in(0,1)$, any $N\in\N$, any $t\ge0$ and any $m>0$, if $\mathcal X\in\Gamma(m,c)$, then
\[
\P_{\mathcal X}\big(  X\uppar N(t) \not \in \Gamma(m+j,c) \big)
\leq 4d Ne^{t} e^{-j^2 /(36 d t)} \quad \text{for all}\; j>0.
\]
\end{prop}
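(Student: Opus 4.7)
I will use the standard coupling in Section~\ref{sec:notations} between the $N$-BBM $X\uppar N$ and a BBM $X^+$, both started from $\mathcal X$, and reduce the problem to a single probabilistic estimate, namely that no BBM particle moves further than $j/3$ from its initial position over $[0,t]$. Set
\[
E = \Big\{ \forall\, u \in \set+_t :\ \sup_{s\in[0,t]} \|X^+_u(s) - X^+_u(0)\| < j/3 \Big\}.
\]
The many-to-one lemma, a union bound over the $d$ Brownian coordinates, the reflection principle and the standard Gaussian tail bound together yield
\[
\E\!\left[ \big|\{u \in \set+_t :\ \sup_{s\le t}\|X^+_u(s)-X^+_u(0)\|\ge j/3\}\big| \right]
 = N e^t \, \P_0\!\left(\sup_{s\le t}\|B_s\| \ge j/3\right) \le 4d N e^t e^{-j^2/(36dt)},
\]
and Markov's inequality then gives $\P_{\mathcal X}(E^c) \le 4d N e^t e^{-j^2/(36dt)}$. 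It therefore suffices to prove the deterministic inclusion $E \subseteq \{X\uppar N(t) \in \Gamma(m+j,c)\}$.

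For this deterministic step, let $\Phi_s = |\{u \in \setN_s : X^+_u(0) \in \B(m)\}|$ be the number of blue particles at time $s$ whose BBM initial ancestor lies in $\B(m)$, so $\Phi_0 = |\mathcal X \cap \B(m)| \ge cN$. I split according to whether $\Phi$ ever drops below $cN$ on $[0,t]$. If $\Phi_s \ge cN$ throughout, then on $E$ every contributing particle satisfies $\|X^+_u(t)\| \le \|X^+_u(0)\| + j/3 < m + j$, so $|X\uppar N(t)\cap \B(m+j)| \ge \Phi_t \ge cN$ directly. Otherwise let $\tau^* = \inf\{s\ge 0: \Phi_s < cN\} \in [0,t]$; this is a branching time at which $\Phi$ drops by one, which can only occur if the killed particle (the furthest blue) has initial ancestor in $\B(m)$. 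On $E$ the killed particle is then at distance less than $m+j/3$ from the origin, and since it is the furthest blue, every blue particle $u$ at time $\tau^*-$ satisfies $\|X^+_u(\tau^*-)\|<m+j/3$, which using $E$ once more forces $\|X^+_u(0)\|< m + 2j/3$ for every blue $u$.

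The invariant ``every blue particle has initial ancestor in $\B(m+2j/3)$'' is then preserved by the $N$-BBM dynamics: at each subsequent branching the two BBM children of the branching particle inherit the initial ancestor of their (blue) parent, which already lies in $\B(m+2j/3)$ by the invariant, while the furthest blue particle is simply removed. Hence for every $s\in[\tau^*,t]$ and every blue $u$, $X^+_u(0)\in \B(m+2j/3)$, so on $E$ we have $\|X^+_u(s)\|< m+j$ and therefore $|X\uppar N(s)\cap\B(m+j)| = N \ge cN$ for all $s\in[\tau^*,t]$. In either case $X\uppar N(t) \in \Gamma(m+j,c)$, and combining with the bound on $\P_{\mathcal X}(E^c)$ finishes the proof.

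The main obstacle is the second case: once $\Phi$ drops below $cN$ we lose direct control on the particles with ancestor in $\B(m)$, so some alternative invariant is needed. The correct observation is that the very mechanism producing the drop --- the furthest blue having ancestor in $\B(m)$ --- simultaneously forces the entire blue cloud into $\B(m+j/3)$; on $E$ this pins all current initial ancestors inside $\B(m+2j/3)$, and that ancestor condition is then propagated forward by the branching dynamics because children always inherit the initial ancestor of their parent. The choice of $j/3$ in the definition of $E$ is calibrated precisely so that the two stacked slacks of $j/3$ (current displacement from ancestor, and ancestor position past $m$) both fit within $\B(m+j)$.
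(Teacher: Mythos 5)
Your proof is correct, and it follows the same overall strategy as the paper's: couple the $N$-BBM with a standard BBM, introduce the event $E$ that no BBM particle drifts more than $j/3$ from its starting point on $[0,t]$, bound $\P_{\mathcal X}(E^c)$ by $4dNe^t e^{-j^2/(36dt)}$ via the many-to-one lemma, the reflection principle and a Gaussian tail estimate, and then argue deterministically that $E$ forces $X\uppar N(t)\in\Gamma(m+j,c)$. The probabilistic half of your argument is essentially identical to the paper's.

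The deterministic half is where you take a genuinely different route. The paper splits on the stopping time $\tau=\inf\{s:M\uppar N_s\le m+\tfrac13 j\}$. If $\tau>t$, the coupling identity $\setN_t=\{u:\|X^+_u(s)\|\le M\uppar N_s\ \forall s\le t\}$ immediately shows that on $E$ every particle whose BBM ancestor lies in $\B(m)$ remains blue, yielding $\ge cN$ blues in $\B(m+j/3)$. If $\tau\le t$, the same coupling identity gives $\|X^+_u(\tau)\|\le M\uppar N_\tau\le m+j/3$ for every $u\in\setN_t$, and one triangle inequality finishes. You instead split on the stopping time $\tau^*$ at which the count $\Phi_s$ of blues with ancestor in $\B(m)$ first falls below $cN$. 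Your first case matches the paper's first case in spirit, but your second case requires more work: you deduce from the mechanism of the drop that the entire blue cloud must already lie in $\B(m+j/3)$ at time $\tau^*$, pass to the invariant ``all blue ancestors lie in $\B(m+2j/3)$'', and propagate it forward. That invariant argument is sound (blue ancestors can only be lost, never gained, under the removal rule), but the paper's use of the explicit coupling characterization of $\setN_t$ makes its second case a one-line triangle inequality. Your approach buys a slightly more dynamical picture of how confinement spreads; the paper's buys brevity. Either is fine.
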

\begin{proof}
The argument, in particular the final Gaussian tail estimate, is very similar to the one used in the proof of Lemma~\ref{lemAsup}. Recall the coupling in Section \ref{sec:notations} between the
$N$-BBM $X\uppar N$ and the BBM $X^+$ such that under the coupling, $X\uppar N(0)=X^+(0)$ and for $t\ge0$, almost surely
\begin{equation}
\{X\uppar N_k (t)\}_{k=1}^N = \{X^+_u(t)\}_{u\in \setN _t}, \quad \text{where } \quad \setN _t=  \Big\{ u\in \set+_t :
\big\|X^+_{u}(s)\big\| \le M\uppar N_{s}\ \forall s\in [0,t] \Big\}.
\label{eq:couplingXN}
\end{equation}

Fix $t\ge 0$ and $j>0$, and define the event
\[
E=\bigg\{
\max_{u\in \set+_t }\sup_{s\in [0,t]}
\big\|X_{u}^+(s)-X_{u}^+(0)\big\| < \tfrac13j
\bigg\}.
\]
We claim that on the event $E$, if $X\uppar N(0)\in \Gamma(m,c)$ for some
$c\in(0,1)$ and $m>0$, then $X\uppar N(t)\in \Gamma(m+j,c)$. The proof of the claim is split into two cases, depending on the stopping time 
\[
\tau=\inf\big\{s\geq 0:M\uppar N_s\leq m+\tfrac 13 j\big\}.
\]
We first consider the case $\tau>t$, meaning that $M\uppar N _s> m+\frac 13 j$ $\forall s\le t$.
By~\eqref{eq:couplingXN}, on the event $E\cap \{\tau>t\}$ we have
\begin{align*}
X\uppar N(t) 
&\supseteq
\Big\{ X_u^+(t):u \in \set+_t,\
\big\|X^+_{u}(s)\big\|<m+\tfrac13j\ \forall s\in [0,t] \Big\}
\\
&\supseteq
\Big\{ X_u^+(t):u \in \set+_t,\
\big\|X^+_{u}(0)\big\|<m \Big\},
\end{align*}
where we used   $\big\|X_{u}^+(s)-X^+_{u}(0)\big\| < \frac13j$ in the last
line. Now using that $\big\|X_{u}^+(t)-X^+_{u}(0)\big\| <  j$ on the event $E$, it follows that if $X\uppar N (0)\in \Gamma(m,c)$ then
$X\uppar N(t)\in \Gamma(m+j,c)$.

We now consider the second case, $\tau\leq t$. Take $u\in \setN_t$. On the event $E\cap \{\tau \leq t\}$, $\|X_{u}^+(\tau)\|\le M_\tau\uppar N \le m+\frac13j$, and by the triangle inequality,
\begin{align*}
\big\|X^+_u(t)\big\| 
\le \big\|X_{u}^+(t)-X_{u}^+(0)\big\|
+\big\|X_{u}^+(\tau)-X_{u}^+(0)\big\|
+ \big\|X_{u}^+(\tau)\big\|
< m+j
\end{align*}
by the definition of the event $E$.
By~\eqref{eq:couplingXN}, this shows that $\|X\uppar N_k(t)\| < m+j$ $\forall k\in \{1,\ldots, N\}$, and, in particular, 
$X\uppar N(t)\in \Gamma(m+j,c)$.
This completes the proof of the claim.

Hence for $\mathcal X \in \Gamma(m,c)$,
\begin{align*}
\P_{\mathcal X}\left(  X\uppar N(t) \not \in \Gamma(m+j,c) \right) \leq \P_{\mathcal X}(E^c)
&\le N e^{t}\P_0\bigg(\sup_{s\in [0,t]}\|B_s\|\ge \tfrac 13 j \bigg)
\end{align*}
by the many-to-one lemma. Letting $\xi_{1,s},\ldots,\xi_{d,s}$ denote the coordinates of $B_s$, we have
\begin{align*}
\P_{\mathcal X}\left(  X\uppar N(t) \not \in \Gamma(m+j,c) \right)
&\le N e^{t}\P_0 \left(\sup_{s\in [0,t]}\xi_{1,s}^2\ge \tfrac 1 {9 d}j^2
\text{ or }\ldots\text{ or }\sup_{s\in [0,t]}\xi_{d,s}^2 \ge 
\tfrac 1 {9 d}j^2 \right)
\\
&\le N e^{t}d\, \P_0 \left(\sup_{s\in [0,t]}\big|\xi_{1,s}\big|
\ge \tfrac 1 {3 \sqrt d}j\right)
\\
&\le N e^{t} \cdot 4d\,\P_0 \left(\xi_{1,t} \ge \tfrac 1 {3 \sqrt d}j\right)
\\&
\le 4d N e^{t}e^{-j^2/(36 dt)},
\end{align*}
where the third inequality follows by the reflection principle and the fourth by a Gaussian tail estimate.
\end{proof}
We now show that 
for $c_0 \in (0,1)$ and $K_0>0$ appropriately chosen,
if $N$ and $t$ are sufficiently large, 
if $X\uppar N(0)\in \Gamma(K_0,c_0)$ then
$X\uppar N (t)\in \Gamma(K_0,c_0)$ with high probability.
\begin{prop} \label{prop:muNlargetime}
Take $t_0>1$ and $c_0>0$ as in Proposition~\ref{prop:movemass}, and fix $K_0=3$.
For $\epsilon>0$, there exist $N'_\epsilon<\infty$ and $t'_\epsilon<\infty$ such 
that for $N\geq N'_\epsilon$, the following holds. For $t\ge t'_\epsilon$, 
\begin{align} 
&\inf_{\mathcal X\in \Gamma(K_0,c_0)}\psub{\mathcal X}{X\uppar N(t)\in \Gamma(K_0,c_0)}
\ge 1- \epsilon.
\label{eq:MC1}
\intertext{Furthermore, for $t_1\in [t_0,2t_0]$ and any $K>0$,}
 &\sup_{\mathcal X\in \Gamma(K,c_0)}\E_{\mathcal X}\left[\inf\{n\geq 1: X\uppar N(nt_1)\in \Gamma(K_0,c_0)\} \right]<\infty.
 \label{eq:MC3}
\end{align}
\end{prop}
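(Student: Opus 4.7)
The plan is to combine the PDE-level contraction of Proposition~\ref{prop:movemass} with the particle-to-PDE comparison of Proposition~\ref{prop:NBBM1d}, and to control the spread of the configuration via Proposition~\ref{prop:muNcjump}. I first establish a one-step bootstrap: for $\mathcal X\in\Gamma(K_0,c_0)$ and $t_1\in[t_0,2t_0]$, the initial cumulative distribution satisfies $F\uppar N(\cdot,0)\ge c_0\indic{\cdot\ge K_0}$, so Proposition~\ref{prop:movemass} with $c=c_0$, $K=K_0=3$, $n=1$ gives $v\uppar N(r,t_1)\ge 2c_0\indic{r\ge K_0-1}$ and in particular $v\uppar N(K_0,t_1)\ge 2c_0$. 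Proposition~\ref{prop:NBBM1d} then yields $F\uppar N(K_0,t_1)\ge 3c_0/2$ with probability at least $1-e^{t_1}N^{-1-c_1}$, provided $N$ is large enough that $e^{2t_1}N^{-c_1}\le c_0/2$. The analogous bound for $s\in[2t_0,3t_0]$ follows using Proposition~\ref{prop:movemass} with $n=2$ and $t_1=s/2$.

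For (\ref{eq:MC3}) I iterate this one-step argument in terms of the ``level''. Given $\mathcal X\in\Gamma(K,c_0)$ with $K\ge 2$, Proposition~\ref{prop:movemass} gives $v\uppar N(r,t_1)\ge 2c_0\indic{r\ge K-1}$, and Proposition~\ref{prop:NBBM1d} produces $F\uppar N(K-1,t_1)\ge c_0$ with probability at least $1-p_N$ where $p_N:=e^{t_1}N^{-1-c_1}$ is independent of $K$; on the complementary event, Proposition~\ref{prop:muNcjump} gives Gaussian-tailed control of the resulting increase of the level. The level process $L_n:=\inf\{K\ge 2\colon X\uppar N(nt_1)\in\Gamma(K,c_0)\}$ is therefore a biased random walk with strong downward drift and light-tailed upward jumps, whose expected hitting time of $K_0$ is finite starting from any fixed $K$; this yields (\ref{eq:MC3}).

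The main obstacle is upgrading the one-step estimate to the uniform-in-$t$ conclusion (\ref{eq:MC1}): iterating the one-step bound over $n=t/t_1$ windows via the Markov property produces a failure probability of order $np_N$, which grows with $t$ for fixed $N$. My plan is a look-back argument exploiting (\ref{eq:MC3}). For $t\ge t'_\epsilon$, write $t=t_a+t_b$ with $t_b\in[t_0,3t_0]$ a short final window. I would show, uniformly in $t_a$, that $\psub{\mathcal X}{X\uppar N(t_a)\in\Gamma(K_0,c_0)}\ge 1-\epsilon/2$ by combining Proposition~\ref{prop:muNcjump} (or a tightness strengthening of it) to confine $X\uppar N(t_a-T^*)$ to some $\Gamma(K^*,c_0)$ for $T^*,K^*$ depending only on $\epsilon$ and $N$, together with (\ref{eq:MC3}) applied on the Markov chain $(X\uppar N(nt_1))$ to ensure that within the window $[t_a-T^*,t_a]$ this chain hits $\Gamma(K_0,c_0)$ with probability $\ge 1-\epsilon/4$. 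The Markov property at the hitting time combined with the one-step bootstrap on $[t_a,t]$ then delivers (\ref{eq:MC1}). Making the first ingredient precise, namely uniform-in-time tightness of the support, is the chief technical difficulty, since Proposition~\ref{prop:muNcjump} alone only controls spread on a time window that grows at most logarithmically in $N$.
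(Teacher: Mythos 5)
Your one-step bootstrap (Proposition~\ref{prop:movemass} to get $v\uppar N(K_0,t_1)\geq 2c_0$, then Proposition~\ref{prop:NBBM1d} to transfer this to $F\uppar N$, with Proposition~\ref{prop:muNcjump} handling the overshoot) is exactly the paper's mechanism, and your treatment of~\eqref{eq:MC3} via a level process with strong downward drift and geometrically-decaying upward jumps is correct in spirit, though less explicit than the paper's computation of the hitting-time moments $m_j$.

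The genuine gap is in your plan for~\eqref{eq:MC1}, and you correctly sense it yourself. Your look-back argument requires, as an input, that $X\uppar N(t_a-T^*)\in\Gamma(K^*,c_0)$ with high probability \emph{uniformly in} $t_a$ — but this is essentially a restatement of the conclusion~\eqref{eq:MC1} you are trying to prove (with $K^*$ in place of $K_0$), so the argument is circular. Proposition~\ref{prop:muNcjump} cannot supply this input because its Gaussian bound $e^{-j^2/(36dt)}$ degenerates as $t\to\infty$, and~\eqref{eq:MC3} only gives finite expected return time, which does not by itself yield a uniform-in-$n$ occupation probability. What is missing is the paper's key device: define $\theta_n=\min\{i:X\uppar N(nt_1)\in\Gamma(K_0+i,c_0)\}$ and stochastically dominate it by an explicit $\N_0$-valued Markov chain $(Y_n)$ with transitions $p_{i,i-1}=1-2\delta$ (for $i\ge1$), $p_{i,i+j}=\delta 2^{-j}$ ($j\ge0$), $p_{0,0}=1-\delta$. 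Although $\theta_n$ itself is not Markov, the one-step estimate~\eqref{forallj} makes the coupling $\theta_n\le Y_n$ possible. Then $(Y_n)$ is irreducible, aperiodic and positive recurrent (the paper shows $m_0\le 1+15\delta$), so the classical convergence theorem for Markov chains gives $\P(Y_n=0\mid Y_0=0)\to 1/m_0\geq 1/(1+15\delta)$, which is a genuinely uniform-in-large-$n$ statement. This replaces the uniform-in-time tightness you could not establish: you never need to control where the chain is at a fixed intermediate time, you only need positive recurrence of the dominating chain plus its ergodic theorem.
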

Note that for any initial condition $\mathcal X \in
(\R^d)^N$, we have that $\mathcal X \in \Gamma(K,c_0)$ where $K=\max_{i\le N} \|\mathcal X_i\|+1$, and so it follows immediately from~\eqref{eq:MC3} that
for $N$ sufficiently large, for $\mathcal X \in (\R^d)^N$ and $t_1\in [t_0,2t_0]$,
\begin{equation} \label{eq:MC2}
\psub{\mathcal X}{ \inf\{n\geq 1: X\uppar N(nt_1)\in \Gamma(K_0,c_0)\}<\infty}=1.
\end{equation} 
\begin{proof}
The proof uses Propositions~\ref{prop:NBBM1d},~\ref{prop:muNcjump} 
and~\ref{prop:movemass} to establish a coupling with a Markov chain. Take $\delta\in (0,1/16)$ sufficiently small that $\frac{1}{1+15\delta}\geq 1- 
\frac 12 \epsilon$.
Suppose $N'_\epsilon$ is sufficiently large that for $N\geq 
N'_\epsilon$ we have 
\begin{equation} \label{eq:NassumptionsGamma}
4dNe^{2t_0}  
e^{-(\log N)^{4/3}/(144 dt_0)}<\tfrac 12 \delta
\quad \text{and} \quad
 \frac{(\log N)^{2/3}}{144dt_0}>\log 2.
\end{equation}
Also, recalling the definition of $c_1$ in Proposition~\ref{prop:NBBM1d}, suppose that $N'_\epsilon$ is sufficiently  large that for $N\ge N'_\epsilon$, Proposition~\ref{prop:NBBM1d} holds,
\begin{equation}
 \label{asump1}
 e^{4t_0}N^{-c_1}\le c_0 \quad \text{and} \quad e^{2t_0}N^{-1-c_1}\le \delta 2^{-(\log N)^{2/3}-1}.
\end{equation}
Take $N\geq N'_\epsilon$ and $t_1\in[t_0,2t_0]$. 
We first show that
\begin{equation}
\label{forallj}
\mathcal X\in\Gamma(m,c_0)\implies\psub{\mathcal X}{X\uppar N(t_1) \notin \Gamma(m+j-1,c_0)}
\leq \delta 2^{-j}\quad\forall j,m\in\N_0\text{ with }m\ge K_0 .
\end{equation}
Take $m\in \N$ with $m\geq K_0$ and suppose  $\mathcal X\in
\Gamma(m,c_0)$. 
We first assume that $j\in \N$ with $j \geq (\log N)^{2/3}+1$. Then by Proposition~\ref{prop:muNcjump}, the fact that $t_1\in[t_0,2t_0]$ and then by~\eqref{eq:NassumptionsGamma}
we have
$$
\psub{\mathcal X}{X\uppar N (t_1) \notin \Gamma(m+j-1,c_0)}
\leq 4dNe^{2t_0}  e^{-(j-1)^2 /(72 dt_0)} \leq \tfrac 12 \delta e^{-(j-1)^2
/(144 d t_0)}\leq \delta 
2^{-j}.
$$
We now consider the case $j \leq (\log N)^{2/3}+1$.
By Proposition~\ref{prop:movemass}, for $v\uppar N$ the solution 
of~\eqref{pbv_probpaper} with $v_0(r)= N^{-1} |\mathcal X \cap \B(r)|$, we have that $v\uppar N(m-1,t_1)\geq 2c_0$.
Therefore by Proposition~\ref{prop:NBBM1d}, 
since, by~\eqref{asump1}, $e^{2t_1}N^{-c_1}\le c_0$,
\begin{align*}
\psub{\mathcal X}{X\uppar N (t_1) \notin \Gamma(m-1,c_0)}
&\leq \psub{\mathcal X}{ \big|F\uppar N(m-1,t_1)-v\uppar N(m-1,t_1)\big|\geq c_0 }\\
&\leq e^{t_1}N^{-1-c_1}.
\end{align*}
In particular, this and the condition \eqref{asump1} imply that for $j\in \N_0$ with $j\leq (\log N)^{2/3}+1$,
\begin{align*}
\psub{\mathcal X}{X\uppar N(t_1) \notin \Gamma(m+j-1,c_0)}
\le \psub{\mathcal X}{X\uppar N(t_1) \notin \Gamma(m-1,c_0)}
\le e^{t_1}N^{-1-c_1}\leq \delta 2^{-j},
\end{align*}
and \eqref{forallj} is proved.

Let us define the sequence of random variables $(\theta_n)_{n=0}^\infty$ by
\[
\theta_n = \min\Big\{i\in \N_0 \; :\;X\uppar N (nt_1)\in \Gamma(K_0 + i,
c_0)\Big\} = \min\Big\{i\in \N_0 \;:\; F\uppar N(K_0 + i, nt_1)\ge c_0\Big\}.
\]
Although $\theta_n$ itself is not a Markovian process, \eqref{forallj} and the Markov property applied to $X\uppar N$ implies that for all $\mathcal X \in (\R^d)^N$ and $n,i,j \in \N_0$, 
\begin{align} \label{eq:probjumpMC}
\theta_n\le i \implies \psub{\mathcal X}{ \theta_{n+1} \geq i+j \;|\; \mathcal F_{nt_1} }\leq \delta 2^{-j}.
\end{align}
Define a Markov chain $(Y_n)_{n=0}^\infty$ on $\N_0$ as follows.
For $n\in \N_0$ and $i,j\in \N_0$, let $$\P\big(Y_{n+1}=j \;\big|\; Y_n=i
\big) 
=p_{i,j},$$ where
$$ p_{0,j}=\begin{cases}
1-\delta \quad &\text{if }j=0\\
\delta 2^{-j} \quad &\text{if }j\geq 1
\end{cases}
\qquad\qquad\text{and, for $i\ge1$, }\quad
 p_{i,i+j}=\begin{cases}
1-2\delta \quad &\text{if }j=-1\\
\delta 2^{-j} \quad &\text{if }j\geq 0.
\end{cases}
$$
Suppose for $K>0$ that $\mathcal X\in \Gamma(K,c_0)$.
Then by~\eqref{eq:probjumpMC},
conditional on $X\uppar N(0)=\mathcal X$ and $Y_0=\max(0,\lceil
K-K_0\rceil)$, 
we can couple $(X\uppar N (nt_1))_{n=0}^\infty $ and $(Y_n)_{n=0}^\infty$ in such a way that almost surely, $\theta_n \leq Y_n$ holds for each $n \in \N_0$, which means that
\[
X\uppar N (nt_1)\in \Gamma(K_0+Y_n,c_0).
\]

For $j\in \N_0$,
introduce $m_j\ge1$ as the expected number of steps needed for $Y_n$ to
reach zero starting from $Y_0=j$:
$$m_j:= \E\left[\left.\inf\{n\geq 1 :Y_n=0\}\;\right|\; Y_0=j\right].$$
Then for  $n\in \N_0$ and $\mathcal X\in \Gamma(K,c_0)$, the coupling
implies
\begin{equation} \label{eq:muNY}
\psub{\mathcal X}{X\uppar N (nt_1)\notin \Gamma (K_0,c_0)} = \psub{\mathcal X}{\theta_n > 0}
\leq \P\big(Y_n \neq 0\;\big|\; Y_0=\lceil K-K_0\rceil\vee0 \big)
\end{equation}
and
\begin{equation} \label{eq:(A)MCcomp}
\E_{\mathcal X}\left[\inf\{n\geq 1: X\uppar N (nt_1)\in \Gamma (K_0,c_0)\} \right]
\leq m_{\lceil K-K_0\rceil\vee0 }.
\end{equation}
Note also that
\begin{align}\label{eq:m0}
m_0
&=1-\delta +\sum_{j=1}^\infty \delta 2^{-j}
m_j.
\end{align}
We now bound $m_j$ for $j\in \N$.
Let $(A_i)_{i=1}^\infty$ be i.i.d.~with $A_i\sim \text{Bernoulli}(2\delta)$, and 
let $(G_i)_{i=1}^\infty$ be i.i.d.~geometric random variables with
$\P(G_1=k)=2^{-{k-1}}$ for $k\in\N_0$.
Then for $j\in \N$,
\begin{align} \label{eq:EhitGA}
m_j
&=\E\bigg[\inf\bigg\{n\geq 1 :j + \sum_{i=1}^n\big(A_iG_i-(1-A_i)\big)\leq
0\bigg\}\bigg] \notag \\
&=\E\bigg[\inf\bigg\{n\geq 1 :\sum_{i=1}^n A_i(G_i+1)\leq n-j\bigg\}\bigg] \notag \\
&\leq 1+\sum_{k=1}^\infty \p{\sum_{i=1}^k A_i (G_i+1)>k-j},
\end{align}
since for a random variable $Z$ taking values in $\N_0$, we have that
$\E[Z]=\sum_{k=0}^\infty \p{Z> k}$, and since for $k\ge 1$,
\[
\P\bigg({\inf\bigg\{n\geq 1 :\sum_{i=1}^n A_i(G_i+1)\leq
n-j\bigg\}>k}\bigg)
\le \P\bigg(\sum_{i=1}^k A_i (G_i+1)>k-j\bigg).
\]
For $k\in \N$ and $\lambda>0$, by Markov's inequality,
\begin{align*}
\p{\sum_{i=1}^k A_i(G_i+1)> k-j}
&\leq e^{-\lambda (k-j)} \E\left[e^{\lambda \sum_{i=1}^k A_i(G_i+1)}\right]\\
&=e^{-\lambda (k-j)} \E\left[e^{\lambda A_1(G_1+1)}\right]^k.
\end{align*}
For $\lambda \in (0,\log 2)$,
\begin{align*}
\E\left[e^{\lambda A_1(G_1+1)}\right]&=2\delta \frac{\frac 12 e^\lambda}{1-\frac 12 
e^\lambda}+1-2\delta .
\end{align*}
Hence letting $\lambda  = \log (3/2)$,
\begin{align*}
\E\left[e^{\log (3/2) A_1(G_1+1)}\right]&=1+4\delta <\tfrac 54
\end{align*}
since we chose $\delta<\frac{1}{16}.$
It follows that for $k\in \N$,
\begin{align*}
\p{\sum_{i=1}^k A_i(G_i+1)> k-j}
&\leq \left( \tfrac 32 \right)^j \left( \tfrac 56 \right)^k.
\end{align*}
Hence by~\eqref{eq:EhitGA},
\begin{align} \label{eq:Ehitsum}
m_j
&\leq 1+\left( \tfrac 32 \right)^j \sum_{k=1}^\infty\left( \tfrac 56 \right)^k 
=1+5 \left( \tfrac 32 \right)^j.
\end{align}
Therefore by~\eqref{eq:m0},
\begin{align*}
m_0 &\leq 1-\delta +\sum_{j=1}^\infty \delta 2^{-j}
\left(1+5 \left( \tfrac 32 \right)^j\right)
=1+15\delta.
\end{align*}
It follows that $(Y_n)_{n=0}^\infty$ is positive recurrent, and since it is also irreducible and
aperiodic, by convergence to equilibrium for Markov chains we have that as $n\to \infty$,
$$
\p{Y_n=0\;|\;Y_0=0}\to \frac{1}{m_0}\geq \frac{1}{1+15\delta}.
$$
Since $\frac{1}{1+15\delta}\geq 1-\frac 12 \epsilon$, there exists 
$n_\epsilon<\infty$ such that for $n\geq n_\epsilon$,
$$
\p{Y_n\neq 0\;|\;Y_0=0}<\epsilon,
$$
and so by~\eqref{eq:muNY}, for $\mathcal X \in \Gamma(K_0,c_0)$ and $n\geq n_\epsilon$,
\begin{equation} \label{eq:nearlyMC1}
\psub{\mathcal X}{X\uppar N (nt_1)\notin \Gamma (K_0,c_0)}
\leq \epsilon.
\end{equation}
Let $t'_\epsilon = \max(n_\epsilon t_0,2t_0)$. Then for $t\ge t'_\epsilon$, we have that 
$\lfloor t/t_0 \rfloor \ge n_\epsilon$ and $ t / \lfloor t/t_0 \rfloor\in [t_0,2t_0]$.
Therefore~\eqref{eq:MC1} follows from~\eqref{eq:nearlyMC1} with $t_1=t / \lfloor t/t_0 \rfloor$ and $n=\lfloor t/t_0 \rfloor$.

Finally, note that by~\eqref{eq:(A)MCcomp} and~\eqref{eq:Ehitsum}, for $K>0$,
if $\mathcal X \in \Gamma(K,c_0)$ then
\begin{align*}
\E_{\mathcal X}\left[\inf\{n\geq 1: X\uppar N(nt_1)\in \Gamma(K_0,c_0)\}\right]
&\leq 1+5\left(\tfrac 3 2 \right)^{\lceil K -K_0\rceil \vee 0},
\end{align*}
which establishes~\eqref{eq:MC3} and completes the proof.
\end{proof}

\begin{proof}[Proof of Proposition~\ref{prop:ssp}]
Take $t_0$ and $c_0$ as in Proposition~\ref{prop:movemass}, and fix $K_0=3$. Take $\epsilon >0$ and take $N'_\epsilon$ and $t'_\epsilon $ as in Proposition~\ref{prop:muNlargetime}. 
Take $t_\epsilon=t_\epsilon(c_0,K_0)$ as defined in Proposition~\ref{prop:conv}.
Take $c\in (0,c_0]$ and $K\ge K_0$, and let
\begin{equation} \label{eq:Ldef}
L = \lceil K-K_0 \rceil +1+\left\lceil \frac{\log (c_0/c)}{\log 2} \right\rceil.
\end{equation}
Recall the definition of $c_1$ in Proposition~\ref{prop:NBBM1d}, and suppose $N\ge N'_\epsilon$ is sufficiently large that  Proposition~\ref{prop:NBBM1d} holds and that
\[
e^{2Lt_0}N^{-c_1}\le c_0,\qquad
e^{Lt_0}N^{-1-c_1}\le \epsilon,\qquad
e^{2t_\epsilon} N^{-c_1}<\epsilon\qquad
\text{ and }\qquad
e^{t_\epsilon}N^{-1-c_1}<\epsilon.
\]

Take $\mathcal X\in \Gamma(K,c)$, and let $v\uppar N$ denote the solution of~\eqref{pbv_probpaper} with initial condition $v_0(r)=N^{-1} \big|\mathcal X\cap \B(r)\big|$, which satisfies $v_0(r) \geq c \indic{r \geq K }$. By 
Proposition~\ref{prop:movemass}, recalling the definition of $L$ in~\eqref{eq:Ldef}, we have the lower bound
\begin{equation}\label{vshiftapp}
v\uppar N(r,Lt_0) \geq 2 c_0 \indic{r \geq K_0}, \quad \forall \;\; r \geq 0.
\end{equation}
(The time $Lt_0$ is the same as the time $t_a$ mentioned in the outline at the start of Section~\ref{sec:ssp}.)

Now we compare $F\uppar N(\cdot ,L t_0)$ with $v\uppar N(\cdot ,L t_0)$, to show that $F\uppar N(K_0,Lt_0) \geq c_0$ with high probability. By \eqref{vshiftapp} and by Proposition~\ref{prop:NBBM1d}, since $N$ is sufficiently large 
that $e^{2Lt_0}N^{-c_1}\le c_0$ and $e^{Lt_0}N^{-1-c_1}\le \epsilon$,
we now have that
\begin{align} \label{eq:FN(2)}
\psub{\mathcal X}{F\uppar N(K_0,Lt_0)\leq c_0 }&\leq 
\P_{\mathcal X}\bigg(\sup_{r\geq 0}\big|F\uppar N (r,Lt_0)-v\uppar N (r,Lt_0)\big|\geq c_0\bigg) \notag \\
&\leq \epsilon .
\end{align}
Take $t \geq t'_\epsilon 
+Lt_0$.
Then
\begin{align} \label{eq:FNK_0big}
&\psub{\mathcal X}{F\uppar N(K_0,t)< c_0 } \notag \\
&\quad \leq \psub{\mathcal X}{F\uppar N(K_0,Lt_0)\leq c_0 } +
 \psub{\mathcal X}{F\uppar N(K_0,Lt_0)\geq c_0,\; F\uppar N(K_0,t) <
c_0} \notag \\
&\quad \leq \epsilon +\E_{\mathcal X}\left[ \psub{X \uppar N(Lt_0)}{X\uppar N(t-Lt_0)\notin \Gamma (K_0,c_0)}\indic{X\uppar N (Lt_0)\in \Gamma (K_0,c_0)}\right] \notag \\
&\quad \leq 2\epsilon, 
\end{align}
where the second inequality
follows by~\eqref{eq:FN(2)} and the last inequality
 follows by~\eqref{eq:MC1} in
Proposition~\ref{prop:muNlargetime}.

Now note that for any configuration $\tilde{\mathcal X} \in \Gamma(K_0,c_0)$, letting $\tilde v$ denote the solution of~\eqref{pbv_probpaper} with initial condition $v_0(r)= N^{-1} \big|\tilde{\mathcal X}\cap\B(r)\big|$,  we have by Proposition~\ref{prop:conv} that $\sup_{r\geq 0}|\tilde v(r,t_\epsilon)-V(r)|<\epsilon$. Hence
\begin{align} \label{eq:FNVclose}
 \P_{\tilde{\mathcal X}}\bigg(\sup_{r\geq 0}\Big|F\uppar N(r,t_\epsilon)-V(r)\Big|\geq 2\epsilon\bigg) 
&\leq  \P_{\tilde{\mathcal X}}\bigg( \sup_{r\geq 0}\Big|F\uppar N(r,t_\epsilon)-\tilde v(r,t_\epsilon) \Big|
\geq \epsilon\bigg) 
\le \epsilon,
\end{align}
by Proposition~\ref{prop:NBBM1d}, since 
$N$ is large enough that 
$e^{2t_\epsilon}N^{-c_1}\le \epsilon$ and $e^{t_\epsilon}N^{-1-c_1}\le \epsilon$.

Hence for $t\geq t'_\epsilon+Lt_0+t_\epsilon$ and $\mathcal X \in \Gamma(K,c)$,
\begin{align} \label{eq:(dagger)ssp}
\P_{\mathcal X}\bigg(\sup_{r\geq 0}\Big|F\uppar N(r,t)-V(r)\Big|\geq 2\epsilon    \bigg) 
&\le\P_{\mathcal X}\bigg(\sup_{r\geq 0}|F\uppar N(r,t)-V(r)|\geq 2\epsilon \;,\;\;F\uppar N(K_0,t-t_\epsilon)\geq c_0  \bigg) \notag \\
&\qquad+ \P_{\mathcal X}\Big(F\uppar N(K_0,t-t_\epsilon)< c_0 \Big) \notag \\
&\le \epsilon+2\epsilon,
\end{align}
using \eqref{eq:FNK_0big} for the second term and  \eqref{eq:FNVclose} with the Markov property and $\tilde{\mathcal X}:=X\uppar N(t-t_\epsilon)$ for the first term. (Indeed,  $F\uppar N(K_0,t-t_\epsilon)\ge c_0$ is equivalent to $X\uppar N(t-t_\epsilon)\in\Gamma(K_0,c_0)$.) This concludes the proof of \eqref{eq:sspA}, the first statement of Proposition~\ref{prop:ssp}. We now turn to proving \eqref{eq:sspC}, the third statement.

Assume that $t\geq t'_\epsilon+Lt_0 +t_\epsilon+1 $ and $\mathcal X \in \Gamma(K,c)$, and
introduce $\lambda=N^{-c_1/3}$. For any family $(E_k)_{k=0}^\infty$ of events, we have using~\eqref{eq:FNK_0big} that
\begin{align} \label{eq:(*)ssp}
\P_{\mathcal X}\bigg( \sup_{s\in [0,1]} M\uppar N_{t+s}>R_\infty +2\epsilon\bigg) 
&\le 2\epsilon+\sum_{k=0}^{\lfloor1/\lambda\rfloor}
\P_{\mathcal X} \bigg(E_k\;,\;\sup_{s\in[\lambda,2\lambda]}M\uppar N_{t+(k-1)\lambda+s}>R_\infty+2\epsilon\bigg)\notag\\
&\qquad+\sum_{k=0}^{\lfloor 1/\lambda\rfloor}
\P_{\mathcal X}\Big(E_k^c\;,\;F\uppar N(K_0,t-t_\epsilon-\lambda)\ge c_0\Big) .
\end{align}
We use this expression with the events
$$E_k=  \big\{F\uppar N\big(R_\infty +\epsilon,t+(k-1)\lambda\big )\ge 1-N^{-c_1/2}\big\}.$$
For $\tilde{\mathcal X} \in \Gamma(K_0,c_0)$, let $\tilde v$ denote the solution of~\eqref{pbv_probpaper} with initial condition 
\[
v_0(r)=N^{-1}\big| \tilde{\mathcal X} \cap \B(r)\big|
\]
and let $\tilde R_t=\inf\{r\ge 0:\tilde v(r,t)=1\}$ for $t>0$. Then by Proposition~\ref{prop:conv}, for $t\ge t_\epsilon$, $|\tilde R_t-R_\infty|<\epsilon$ and so $\tilde v(R_\infty +\epsilon,t)=1$. 
Hence by Proposition~\ref{prop:NBBM1d}, for $k\in \{0,\ldots, \lfloor 1/\lambda\rfloor\}$,
\begin{align*}
\psub{\tilde{\mathcal X}}{F\uppar N (R_\infty +\epsilon,t_\epsilon+k \lambda)\leq 1-e^{2(t_\epsilon+1)}N^{-c_1}}
\leq e^{t_\epsilon +1}N^{-1-c_1}.
\end{align*}
For $N$ sufficiently large that $e^{2(t_\epsilon+1)}N^{-c_1}<N^{-c_1/2}$, by the Markov property at time $t-t_\epsilon-\lambda$ this implies that
$$\P_{\mathcal X}\Big(E_k^c\;,\;F\uppar N(K_0,t-t_\epsilon-\lambda)\ge c_0\Big)\le e^{t_\epsilon+1}N^{-1-c_1} .$$
By Lemma~\ref{lemAsup} with $b=c_1/3$, for $N$ sufficiently large, for $t' \ge 0$,
\begin{align*}
\P_{\mathcal X}\bigg(F\uppar N (R_\infty +\epsilon,t')\ge 1-\tfrac 14 \lambda,\,
\sup_{s\in [\lambda,2\lambda]}M\uppar N_{t'+s}>R_\infty +\epsilon+\lambda^{1/3}\bigg)
\le e^{-\lambda^{-1/4}}.
\end{align*}
For $N$ sufficiently large that
$\frac 14 \lambda \ge N^{-c_1/2}$ and
 $\lambda^{1/3}=N^{-c_1/9}<\epsilon$, choosing $t'=t+(k-1)\lambda$, this implies that
$$\P_{\mathcal X}\bigg(E_k\;,\; \sup_{s\in [\lambda,2\lambda]}M\uppar N_{t+(k-1)\lambda+s}>R_\infty +2\epsilon\bigg)\le e^{-\lambda^{-1/4}}=e^{-N^{c_1/12}}.
$$
By~\eqref{eq:(*)ssp},
it follows that  for $N$ sufficiently large,
\begin{align*} 
\P_{\mathcal X}\bigg( \sup_{s\in [0,1]} M\uppar N_{t+s}>R_\infty +2\epsilon \bigg)
&\leq 2\epsilon
 + ( N^{c_1/3}+1) (e^{-N^{c_1/12}}+e^{t_\epsilon +1}N^{-1-c_1})
 \le 3\epsilon,
\end{align*}
which concludes the proof of \eqref{eq:sspC} in Proposition~\ref{prop:ssp}. It remains to show \eqref{eq:sspB}.

Recall from~\eqref{eq:Vdef} that $V$ is strictly increasing and continuous on $[0,R_\infty]$, with $V(0)=0$ and $V(R_\infty)=1$.
If $\sup_{r\geq 0}|F\uppar N(r,t)-V(r)|< 2\epsilon $ then 
$M\uppar N_t> V^{-1}(1-2\epsilon)$, and so for $t\geq 
t'_\epsilon +Lt_0+t_\epsilon $ and $\mathcal X \in \Gamma(K,c)$, by~\eqref{eq:(dagger)ssp},
\begin{equation} \label{eq:MNlower}
\psub{\mathcal X}{M\uppar N_t \le V^{-1}(1-2\epsilon) }
\leq 3\epsilon.
\end{equation}
The statement~\eqref{eq:sspB} now follows from~\eqref{eq:MNlower} and~\eqref{eq:sspC}, which completes the proof.
\end{proof}
The following lemma is the main remaining step in the proof of Theorem~\ref{thm:sspd}.
\begin{lem} \label{lem:sspdproof}
For $\epsilon>0$, let
\[
\mathcal{C}_{\epsilon}
=\Big\{X^+_u(\epsilon^{-1/2}):u\in \set+_{\epsilon^{-1/2}}\;,\;
\big\|X^+_{u}(s)\big\|\leq R_\infty +\epsilon \,\,\forall s\in
[\epsilon,\epsilon^{-1/2}]\Big\}.
\]
Take $\delta>0$. Then for $\epsilon>0$ sufficiently small, for $N$ sufficiently large, the following holds: if the initial particle configuration $\mathcal X \in (\R^d)^N$ satisfies
$$\sup_{r\ge 0} \Big|N^{-1} |\mathcal X \cap \B(r)| - V(r)\Big| \le \epsilon, $$
then for any $A\subseteq \R^d$ measurable,
\[
\psub{\mathcal X}{\frac 1 N |\mathcal{C}_{\epsilon}\cap A|-\int_A U(x)\,\diffd x \geq \delta}<\tfrac 12 \delta.
\]
\end{lem}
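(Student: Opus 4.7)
The plan is to control $N^{-1}|\mathcal{C}_\epsilon \cap A|$ in two steps: a deterministic bound on its mean and a Chebyshev concentration around that mean. For the concentration, I would decompose $|\mathcal{C}_\epsilon \cap A| = \sum_{i=1}^N n_{i,A}$, where $n_{i,A}$ counts descendants of the $i$-th initial particle at time $T:=\epsilon^{-1/2}$ whose ancestors remained in $\B(R_\infty + \epsilon)$ on $[\epsilon, T]$ and which lie in $A$. These variables are independent across $i$ and, as in the proof of Lemma~\ref{lem:dhydro2}, each satisfies $\mathrm{Var}(n_{i,A}) \le \E_{\mathcal X_i}[n_i^2] \le 2 e^{2T}$ (using the geometric law of total BBM progeny). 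Chebyshev's inequality then gives
$$\P_{\mathcal X}\bigg(\Big| N^{-1}|\mathcal C_\epsilon \cap A| - N^{-1}\E_{\mathcal X}[|\mathcal C_\epsilon \cap A|]\Big| \ge \tfrac{\delta}{4}\bigg) \le \frac{32\, e^{2T}}{N\delta^2},$$
which is smaller than $\delta/4$ for $N$ sufficiently large (depending on $\epsilon$ and $\delta$). The substantive task is then to show $N^{-1}\E_{\mathcal X}[|\mathcal C_\epsilon \cap A|] \le \int_A U\,\diffd x + \delta/4$ for $\epsilon$ small and $N$ large.

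By the many-to-one lemma, $N^{-1}\E_{\mathcal X}[|\mathcal C_\epsilon \cap A|] = N^{-1}\sum_i \Psi_\epsilon(\mathcal X_i)$ with
$$\Psi_\epsilon(y) := e^T \P_{\!y}\Big(B_T \in A,\ \|B_s\|\le R_\infty + \epsilon\ \forall s\in[\epsilon,T]\Big).$$
I would analyse $\Psi_\epsilon$ via the spectral decomposition of $-\Delta$ with Dirichlet boundary condition on $\B(R_\infty + \epsilon)$. Let $(\phi_k^\epsilon, \mu_k^\epsilon)_{k\ge 1}$ be the $L^2$-orthonormal eigenpairs. By scaling of the radial function $U$, $\mu_1^\epsilon = R_\infty^2/(R_\infty + \epsilon)^2 = 1 - O(\epsilon)$, so the exponential prefactor $e^{(1-\mu_1^\epsilon)(T-\epsilon)} = e^{O(\epsilon^{1/2})} \to 1$ as $\epsilon\to 0$, while the spectral gap $\mu_2^\epsilon - \mu_1^\epsilon$ stays bounded below by some $c>0$ uniformly for small $\epsilon$. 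Conditioning at time $\epsilon$ and truncating the eigenbasis at $k=1$,
$$\Psi_\epsilon(y) = \big(1 + O(\epsilon^{1/2})\big) h_\epsilon(y) \int_A \phi_1^\epsilon\,\diffd x + O\big(e^{-cT}\big),\quad h_\epsilon(y):=e^\epsilon \E_y\big[\phi_1^\epsilon(B_\epsilon)\indic{B_\epsilon \in \B(R_\infty+\epsilon)}\big].$$
Standard domain-continuity of the Dirichlet spectrum gives $\phi_1^\epsilon \to \phi_1 := U/\|U\|_{L^2}$ uniformly on compact subsets of $\B(R_\infty)$, so $\int_A \phi_1^\epsilon\,\diffd x \to \int_A U\,\diffd x/\|U\|_{L^2}$.

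To convert the sum to an integral, I would exploit radiality of $h_\epsilon$: writing $h_\epsilon(y) = \tilde h_\epsilon(\|y\|)$ and $F(r):=N^{-1}|\mathcal X \cap \B(r)|$, Stieltjes integration by parts gives $N^{-1}\sum_i h_\epsilon(\mathcal X_i) = -\int_0^\infty F(r)\, \tilde h_\epsilon'(r)\,\diffd r$. A spherically-ordered coupling of Brownian motions (as in the proof of Proposition~\ref{prop:lowercouple}) together with the radial monotonicity of the principal Dirichlet eigenfunction shows that $\tilde h_\epsilon$ is non-increasing, so $\int|\tilde h_\epsilon'|\,\diffd r = \tilde h_\epsilon(0) = O(1)$ uniformly in small $\epsilon$. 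Combined with $\sup_r |F(r) - V(r)| \le \epsilon$, this yields
$$\frac{1}{N}\sum_{i=1}^N h_\epsilon(\mathcal X_i) = \int h_\epsilon(x) U(x)\,\diffd x + O(\epsilon) \longrightarrow \int \phi_1(x) U(x)\,\diffd x = \|U\|_{L^2},$$
which, multiplied by $\int_A \phi_1^\epsilon\,\diffd x \to \int_A U\,\diffd x/\|U\|_{L^2}$, gives $N^{-1}\E_{\mathcal X}[|\mathcal C_\epsilon \cap A|] \to \int_A U\,\diffd x$. The main obstacle is the quantitative spectral analysis: one needs uniform (in small $\epsilon$) control of the spectral gap and of the convergence of eigenfunctions on compact subsets of $\B(R_\infty)$. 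A secondary subtlety is handling the at most $\epsilon N$ initial particles lying outside $\B(R_\infty + \epsilon)$ (since $F(R_\infty) \ge V(R_\infty) - \epsilon = 1 - \epsilon$); their contribution to $\Psi_\epsilon(\mathcal X_i)$ is controlled by the Gaussian tail $\P_y(B_\epsilon \in \B(R_\infty + \epsilon))$ and hence is only $O(\epsilon)$.
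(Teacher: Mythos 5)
Your approach is essentially the same as the paper's: many-to-one reduces the mean to a Dirichlet heat-semigroup quantity on $\B(R_\infty+\epsilon)$ after waiting time $\epsilon$, you expand in the Dirichlet eigenbasis, bound the non-principal part via the spectral gap (absorbing the $\epsilon^{-d/4}$ polynomial factor, which you elide, into the $e^{-cT}$ decay), and control the principal coefficient by a Stieltjes integration by parts that exploits radial monotonicity of $U$ together with the hypothesis $\sup_r|F(r)-V(r)|\le\epsilon$; a second-moment bound on iid progeny counts then gives concentration. The differences are cosmetic: you condition at time $\epsilon$ and carry the Brownian mollification on the test function side (your $h_\epsilon$), whereas the paper puts it on the measure side (expanding $w(\cdot,0)=\Phi_\epsilon*\mu_{\mathcal X}$ and estimating $a_1$ -- the two are Fubini-dual); you use Chebyshev where the paper uses a fourth-moment Markov bound as in Lemma~\ref{lem:dhydro2} (both suffice here); and you invoke soft domain-continuity of the Dirichlet spectrum where the paper uses the exact scaling relations $\lambda_k^\epsilon=(R_\infty/(R_\infty+\epsilon))^2\lambda_k^0$ and $U_k^\epsilon(x)=(R_\infty/(R_\infty+\epsilon))^{d/2}U_k^0(xR_\infty/(R_\infty+\epsilon))$, which are cleaner and make the gap estimate immediate. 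Your closing remark about the $O(\epsilon N)$ particles outside $\B(R_\infty)$ is unnecessary: they are already handled by the global bound $\sup_r|F(r)-V(r)|\le\epsilon$ combined with $\int_0^\infty|\tilde h_\epsilon'|\,\diffd r=\tilde h_\epsilon(0)=O(1)$, so no Gaussian tail is needed.
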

\begin{proof}
First, we estimate the mean of $\frac 1 N |\mathcal C_{\epsilon} \cap A |$. By the many-to-one lemma,
\begin{align} \label{eq:sspdB}
\E_{\mathcal X}\left[ \frac 1 N |\mathcal C_{\epsilon} \cap A | \right]
&=\frac1N e^{\epsilon^{-1/2}}\sum_{k=1}^N \psub{\mathcal X_k}{B_{\epsilon^{-1/2}}\in A, \, \|B_s\| \leq R_\infty +\epsilon \, \,\, \forall s\in [\epsilon,\epsilon^{-1/2}]} \notag \\
&=\frac1N e^{\epsilon^{-1/2}}\sum_{k=1}^N \Esub{\mathcal
X_k}{\psub{B_\epsilon}{B_{\epsilon^{-1/2}-\epsilon}\in A, \, \|B_s\| \leq R_\infty +\epsilon \, \,\, \forall s\in [0,\epsilon^{-1/2}-\epsilon]}} \notag \\
&= e^{\epsilon^{-1/2}}\int_A w(y,\epsilon^{-1/2}-\epsilon)\,\diffd y,
\end{align}
where $w(y,s)$ is the unique solution to 
\begin{equation}
\begin{cases} \label{weqneps}
\partial _s w=\Delta w  & s>0,\ \|y\|<R_\infty +\epsilon,\\
w(y,s)=0  & s\ge0,\ \|y\|\geq R_\infty +\epsilon, \\
w(y,0) = (\Phi_\epsilon* \mu_{\mathcal X})(y) \quad & \|y\|<R_\infty +\epsilon,
\end{cases}
\end{equation}
where $\Phi_\epsilon(y)$ is the heat kernel and $\mu_{\mathcal X}(\diffd y)$ is the empirical measure on $\R^d$ determined by the points $\{\mathcal X_k\}_{k=1}^N$:
\[
\mu_{\mathcal X}(\diffd y) = \frac{1}{N} \sum_{k=1}^N \delta_{\mathcal X_k}, \qquad \Phi_\epsilon(y) = (4 \pi \epsilon)^{-d/2} e^{- \frac{\|y\|^2}{4 \epsilon}}.
\]

The function $w(x,s)$ can be expanded as a series in terms of the Dirichlet eigenfunctions of the Laplacian on $\mathcal{B}(R_\infty+\epsilon)$ (see~Theorem~7.1.3 of~\cite{Evans2010}):
\begin{equation} \label{eq:sppdC}
w(x,s)=\sum_{k=1}^\infty a_k e^{-s\lambda^\epsilon_k}U^\epsilon_k (x),
\quad s\geq 0,\;\|x\|\leq R_\infty +\epsilon,
\end{equation}
where the partial sums converge weakly in $L^2([0,T];H^1_0)$.
Here $\big\{\big(U_k^\epsilon(x),\lambda^\epsilon_k\big) \big\}_{k \geq 1}$ denote the Dirichlet eigenfunctions and eigenvalues for $-\Delta$ on the ball $\big\{\|x\| \leq R_\infty + \epsilon \big\}$:
\begin{alignat}{2}
 - \Delta U^{\epsilon}_k &= \lambda_k^\epsilon U_k^{\epsilon}, \quad && \text{for }\|x\| < R_\infty + \epsilon, \nonumber \\
U_k^{\epsilon}(x)& = 0, \quad &&\text{for }\|x\| \ge R_\infty + \epsilon. \nonumber 
\end{alignat}
By scaling we have
\begin{equation*}
\lambda^\epsilon_k  = \bigg(\frac{R_\infty }{R_\infty + \epsilon}\bigg)^2 \lambda^0_k\qquad\text{and}\qquad U^{\epsilon}_k(x) = \left(\frac{R_\infty}{R_\infty + \epsilon}\right)^{d/2} U^0_k\bigg(x\frac{R_{\infty}}{R_\infty + \epsilon}\bigg).
\end{equation*}
The eigenvalues satisfy  $\lambda^0_1=1<\lambda^0_2\leq \ldots $ and the eigenfunctions
$\{U_k^\epsilon\}_{k=1}^\infty$ form an orthonormal basis in $L^2(\mathcal{B}(R_\infty + \epsilon))$. Furthermore, $U^0_1(x) =\|U\|^{-1}_{L^2} U(x)$, see \eqref{eqU_probpaper}.

Define $\tilde w(y,s) = \sum_{k= 2}^\infty a_k e^{-s\lambda^\epsilon_k}U^\epsilon_k (y)$. Thus,
\begin{equation} \label{eq:wtildew}
w(y,s) = a_1 e^{-s\lambda^\epsilon_1 }U^\epsilon_1 (y) + \tilde w(y,s),
\end{equation}
and $\tilde w(\cdot,s)$ is orthogonal to $U^\epsilon_1$ in $L^2$ for all $s \geq 0$. Observe that
\begin{align*}
\big\|w(\cdot ,0)\big\|^2_{L^2} & \le  \int_{\R^d} \left ( \int_{\R^d} \Phi_\epsilon(y-x)\,  \mu_{\mathcal X}(\diffd x)  \right)^2 \,\diffd y  \\
& \leq \int_{\R^d} \int_{\R^d} \Phi_\epsilon(y-x)^2\,  \mu_{\mathcal X}(\diffd x)   \,\diffd y \\
& = \int_{\R^d} \left( \int_{\R^d} \Phi_\epsilon(y-x)^2 \,\diffd y \right)\,\mu_{\mathcal X}(\diffd x) = (8\pi \epsilon)^{-d/2},
\end{align*}
where the second line follows by Jensen's inequality.
Therefore, for $s\ge 0$,
\begin{equation} \label{eq:sspdE}
\big\| \tilde w(\cdot,s) \big\|^2_{L^2} \leq e^{-2\lambda_2^\epsilon s} \big\| \tilde w(\cdot,0) \big\|^2_{L^2} \leq e^{-2\lambda_2^\epsilon s} \big\| w(\cdot,0) \big\|^2_{L^2} \leq e^{-2\lambda_2^\epsilon s} (8\pi \epsilon)^{-d/2}.
\end{equation}
Now we estimate $a_1$. Since $\|U_1^\epsilon\|_{L^2} = \|U_1^0\|_{L^2} = 1$ and since $U_1^\epsilon$ is spherically symmetric, writing $\mathbf{e}$ for a unit vector, we have 
\begin{align}
a_1 = \int_{\B(R_\infty +\epsilon)}
U^\epsilon_1 (x)w(x,0)\,\diffd x & =\int_0^{R_\infty +\epsilon}U^\epsilon_1(r\mathbf{e})\left( \int_{\partial\B(r)} (\Phi_\epsilon*\mu_{\mathcal X})(y)\, \diffd S(y) \right) \diffd r \notag \\
& =-\int_0^{R_\infty +\epsilon}\partial _rU^\epsilon_1(r\mathbf{e})\left( \int_{\B(r)} (\Phi_\epsilon*\mu_{\mathcal X})(y)\, \diffd y \right) \diffd r   \label{a1form}
\end{align}
by integration by parts.  Also, for $r\ge 0$,
\begin{align} \label{eq:dU}
-\partial_r U^\epsilon_1(r\mathbf{e})
=-\left(\frac{R_\infty}{R_\infty +\epsilon}\right)^{d/2+1}\big\|U\big\|_{L^2}^{-1}\,\partial_r U\left(\frac{rR_\infty \mathbf{e}}{R_\infty +\epsilon} \right).
\end{align}
In particular, since $U(r\mathbf{e})$ is non-increasing in $r$ for $r\ge 0$, $-\partial_r U^\epsilon_1(r\mathbf{e})\geq 0$.

Now recall that we assume $\big|N^{-1}|\mathcal X \cap \B(r)|-V(r)\big|<\epsilon$ for all $r\geq 0$, and so
\begin{equation}
\sup_{r \geq 0} \big|\mu_{\mathcal X}(\mathcal{B}(r))-V(r)\big|<\epsilon. \label{goody1yN}
\end{equation}
Then for $r\ge 0$,
$$\int_{\mathcal{B}(r)} \int_{\|x \| < r +  \epsilon^{1/3}} \Phi_\epsilon(y - x)\, \mu_{\mathcal X}(\diffd x) \,\diffd y \le \mu_{\mathcal X}\big( \mathcal{B}\big(r + \epsilon^{1/3}\big)\big )
\le  V\big(r+\epsilon^{1/3}\big)+\epsilon,$$
where the first inequality follows since $\int_{\R^d}\Phi_\epsilon (y-x)\,\diffd y=1$.
Furthermore, writing $y=x+z$,
\begin{align*}\int_{\mathcal{B}(r)}\int_{\|x\| \ge r + \epsilon^{1/3}} \Phi_\epsilon(y - x)\,\mu_{\mathcal X}(\diffd x) \,\diffd y
&= \int_{\|x\| \ge r+\epsilon^{1/3}} \int_{\R^d}\indic{\|x+z\|<r} \Phi_\epsilon(z) \,\diffd z\, \mu_{\mathcal X}(\diffd x) 
\\&\le\int_{\|x\| \ge r+\epsilon^{1/3}}\int_{\|z\| > \epsilon^{1/3}} \Phi_\epsilon(z)\,\diffd z\,\mu_{\mathcal X}(\diffd x)
\\&\le \int_{\|z\| > \epsilon^{1/3}} \Phi_\epsilon(z)\,  \diffd z\\
&\le 2d e^{-\epsilon^{-1/3}/(4d)},
\end{align*}
where we used $\mu_{\mathcal X}(\R^d)=1$ in the third line and  a Gaussian tail bound in the last line. This implies that
$$\int_{\mathcal{B}(r)} (\Phi_\epsilon* \mu_{\mathcal X})(y)\, \diffd y \le V(r+\epsilon^{1/3})+\epsilon+2d e^{-\epsilon^{-1/3}/(4d)}.$$
Therefore, by \eqref{a1form} and~\eqref{eq:dU},
\begin{align*}
a_1  &\leq -\int_0^{R_\infty +\epsilon}\left(\frac{R_\infty}{R_\infty+\epsilon}\right)^{d/2+1}\|U\|_{L^2}^{-1}\partial_r U\left( \frac{rR_\infty \mathbf{e}}{R_\infty +\epsilon}\right)\left(V(r+\epsilon^{1/3})+\epsilon+2de^{-\epsilon^{-1/3}/(4d)}\right)\diffd r\\
&\leq \|U\|_{L^2}^{-1}\int_0^{R_\infty +\epsilon}\left(-\partial_r U(r\mathbf{e})+\epsilon  \|\partial_{r}^2 U\|_\infty\right)\left(V(r)+\epsilon^{1/3}\|V'\|_\infty+\epsilon+2de^{-\epsilon^{-1/3}/(4d)}\right)\diffd r\\
&\leq -\|U\|^{-1}_{L^2}\int_0^{R_\infty} \partial _r U(r\mathbf{e})V(r)\,\diffd r +\mathcal O(\epsilon^{1/3}).
\end{align*}
Note that by integration by parts,
$$
-\int_0^{R_\infty}\partial _r U(r\mathbf{e})V(r)\,\diffd r
=\int_0^{R_\infty} U(r\mathbf{e}) V'(r)\,\diffd r
=\|U\|^2_{L^2},
$$
since $V(r)=\int_{\B(r)} U(y)\,\diffd y$.
Hence for $\delta>0$, for $\epsilon$ sufficiently small, 
we have that
\begin{equation} \label{eq:sspdD}
a_1 \leq \|U\|_{L^2} \big(1+\tfrac 14 \delta\big) .
\end{equation}

By~\eqref{eq:wtildew},
we have now shown that for $\epsilon$ sufficiently small, for $s\ge 0$ and $\|x\|\le R_\infty +\epsilon$,
\begin{equation}
w(x,s)\leq \big(1+\tfrac 14 \delta\big)e^{-s\big(\frac{R_\infty}{R_\infty+\epsilon}\big)^2}\left( \frac{R_\infty}{R_\infty+\epsilon}\right)^{d/2} U\left( \frac{xR_\infty}{R_\infty+\epsilon}\right)+\tilde w(x,s), \label{eq:sspdE2}
\end{equation}
where $\tilde w$ satisfies \eqref{eq:sspdE}. Then
\begin{align}\int_A w(x,s)\,\diffd x &\le \big(1+\tfrac14\delta+\mathcal O(\epsilon)\big)e^{-s\big(\frac{R_\infty}{R_\infty+\epsilon}\big)^2}\int_A U(x)\,\diffd x
+ \int_{\mathcal B(R_\infty +\epsilon)} \Big|\tilde w (x,s)\Big|\,\diffd x.
\label{intwA}
\end{align}
By Jensen's inequality and then by~\eqref{eq:sspdE},
$$\int_{\mathcal B(R_\infty +\epsilon)} \big|\tilde w (x,s)\big|\,\diffd x
\le \big|\B(R_\infty+\epsilon)\big|^{1/2} \|\tilde w (\cdot,s)\|_{L^2}
\le \big|\B(R_\infty+\epsilon)\big|^{1/2} e^{-\lambda_2^\epsilon s}(8\pi\epsilon)^{-d/4}.$$
Take $c\in (0,\lambda^0_2-1)$, and
suppose $\epsilon$ is sufficiently small that $\lambda_2^\epsilon=(\frac{R_\infty}{R_\infty +\epsilon})^2 \lambda^0_2>1+c$.
Then
$$\int_{\mathcal B(R_\infty +\epsilon)} \big|\tilde w (x,s)\big|\,\diffd x
\le \big|\B(R_\infty+\epsilon)\big|^{1/2} e^{-(1+c) s}(8\pi\epsilon)^{-d/4}.
$$
By~\eqref{intwA} and \eqref{eq:sspdB}, it follows that
\begin{align}
\E_{\mathcal X}\left[\frac 1 N \big|\mathcal C_{\epsilon} \cap A\big | \right] 
&\le \big(1+\tfrac 14 \delta+\mathcal O(\epsilon)\big) e^{\epsilon^{-1/2}\big[1-\big(\frac{R_\infty}{R_\infty+\epsilon}\big)^2\big]}\int_AU(x)\,\diffd x
\notag\\&\qquad+\big|\B(R_\infty+\epsilon)\big|^{1/2} e^{-c\epsilon^{-1/2}+(1+c)\epsilon}(8\pi\epsilon)^{-d/4}
\notag\\&\le \big(1+\tfrac 14 \delta+\mathcal O(\epsilon^{1/2})\big)\int_AU(x)\,\diffd x
 +\mathcal O(\epsilon)
.%\le \int_AU(x)\,\diffd x + \tfrac12\delta
\end{align}
Therefore,
for $\delta>0$, for $\epsilon>0$ sufficiently small, 
if $\mathcal X \in (\R^d)^N$ satisfies~\eqref{goody1yN} then
for $A\subseteq \R^d$ measurable,
\begin{align*}
\E_{\mathcal X}\left[ \frac 1 N \big|\mathcal C_{\epsilon}\cap A\big| \right]
\leq \int_A U(x)\,\diffd x +\tfrac 12 \delta.
\end{align*}
By the same argument as for
Lemma~\ref{lem:dhydro2}, for $N$ sufficiently large, for $A\subseteq \R^d$ measurable and $\epsilon>0$,
\begin{align} \label{eq:sspdF}
\E_{\mathcal X}\left[\left(\frac 1 N \big|\mathcal C_{\epsilon} \cap A\big|-\E_{\mathcal X}\left[\frac 1 N \big|\mathcal C_{\epsilon} \cap A\big| \right]\right)^4 \right]
\leq 13 e^{4\epsilon^{-1/2}}N^{-2}.
\end{align}
So for $\delta>0$, for $\epsilon>0$ sufficiently small and $N$ sufficiently large,
if $\mathcal X \in (\R^d)^N$ satisfies~\eqref{goody1yN} then
for
$A\subseteq \R^d$ measurable,
\begin{align*}
\psub{\mathcal X}{\frac 1 N \big|\mathcal C_{\epsilon} \cap A\big|-\int_A U(x)\,\diffd x \ge \delta }
&\leq \psub{\mathcal X}{ \left|\frac 1 N \big|\mathcal C_{\epsilon} \cap A\big|-\E_{\mathcal X} \left[\frac 1 N |\mathcal C_{\epsilon} \cap A|\right]\right| \ge \tfrac 12 \delta }\\
&\leq 16 \delta^{-4} \cdot 13 e^{4\epsilon^{-1/2}}N^{-2},
\end{align*}
by~\eqref{eq:sspdF} and Markov's inequality.
The result follows by taking $N$ sufficiently large.
\end{proof}
The following result is an immediate consequence of Lemma~\ref{lem:sspdproof} and the coupling between the BBM $X^+(t)$ and the $N$-BBM $X\uppar N(t)$ described in Section~\ref{sec:notations}.
\begin{corr} \label{cor:sspdproof}
Take $\delta>0$. Then for $\epsilon>0$ sufficiently small, for $N$ sufficiently large, the following holds: if the initial particle configuration $\mathcal X \in (\R^d)^N$ satisfies
$$\sup_{r\ge 0} \big|N^{-1} |\mathcal X \cap \B(r)| - V(r)\big| \le \epsilon, $$
then for any  $A\subseteq \R^d$ measurable,
\[
\psub{\mathcal X}{ M\uppar N_s \le R_\infty +\epsilon \  \forall s\in
[0,\epsilon^{-1/2}]\;,\; \mu\uppar N\big(A,\epsilon^{-1/2}\big)
-\int_A U(x)\,\diffd x \geq \delta}<\tfrac 12 \delta.
\]
\end{corr}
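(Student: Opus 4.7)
The plan is to deduce the corollary almost immediately from Lemma~\ref{lem:sspdproof} by invoking the standard coupling between the $N$-BBM $X\uppar N$ and the unrestricted BBM $X^+$ described in Section~\ref{sec:notations}, both started from the same configuration $\mathcal X$. Recall from~\eqref{eq:XNcontains} that under this coupling, at each time $t \ge 0$ one has almost surely
\[
\setN_t = \bigl\{ u \in \set+_t : \|X^+_u(s)\| \le M\uppar N_s\ \forall s \in [0,t] \bigr\},
\]
so every $N$-BBM particle alive at time $t$ is indexed by a BBM label whose entire ancestral trajectory is dominated by the running maximum $M\uppar N$.

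The key observation is that on the event
\[
E := \bigl\{ M\uppar N_s \le R_\infty + \epsilon\ \forall s \in [0,\epsilon^{-1/2}] \bigr\},
\]
each $N$-BBM particle at time $\epsilon^{-1/2}$ descends from a BBM label whose trajectory stays inside $\B(R_\infty + \epsilon)$ throughout $[0,\epsilon^{-1/2}]$, and hence in particular throughout the sub-interval $[\epsilon,\epsilon^{-1/2}]$ appearing in the definition of $\mathcal C_\epsilon$. Consequently, on $E$, for any measurable $A \subseteq \R^d$,
\[
\bigl|X\uppar N(\epsilon^{-1/2}) \cap A\bigr| \le \bigl|\mathcal C_\epsilon \cap A\bigr|,
\]
and dividing by $N$ yields the pointwise sandwich $\mu\uppar N(A,\epsilon^{-1/2}) \le \tfrac{1}{N}|\mathcal C_\epsilon \cap A|$ on $E$.

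Therefore the event whose probability is to be bounded is contained in $\bigl\{\tfrac{1}{N}|\mathcal C_\epsilon \cap A| - \int_A U(x)\,\diffd x \ge \delta\bigr\}$, and Lemma~\ref{lem:sspdproof} supplies exactly the bound $\tfrac12\delta$ for this event, provided $\epsilon$ is sufficiently small and $N$ is sufficiently large (both depending only on $\delta$, and uniformly in $A$). There is no genuine obstacle at this stage: the substantive work, namely the eigenfunction expansion of the killed heat equation on $\B(R_\infty + \epsilon)$ together with the fourth-moment concentration of $\tfrac{1}{N}|\mathcal C_\epsilon \cap A|$ around its expectation, has already been carried out inside the proof of Lemma~\ref{lem:sspdproof}. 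The role of the corollary is simply to transfer that estimate from the auxiliary BBM-based count $\tfrac{1}{N}|\mathcal C_\epsilon \cap A|$ to the empirical measure $\mu\uppar N(A,\epsilon^{-1/2})$ of the true $N$-BBM, and the coupling sandwich above accomplishes this transfer at essentially no cost.
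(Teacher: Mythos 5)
Your proposal is correct and follows essentially the same argument as the paper's own proof: both use the standard coupling and the identity~\eqref{eq:XNcontains} to show that on the event $\{M\uppar N_s \le R_\infty + \epsilon\ \forall s \in [0,\epsilon^{-1/2}]\}$ the $N$-BBM particles at time $\epsilon^{-1/2}$ form a subset of $\mathcal C_\epsilon$, and then apply Lemma~\ref{lem:sspdproof}.
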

\begin{proof}
Under the coupling between the BBM $X^+$ and the $N$-BBM $X\uppar N$ described in Section~\ref{sec:notations}, we have that for $\epsilon>0$, if $M\uppar N_s \le R_\infty +\epsilon$ $\forall s\in [0,\epsilon^{-1/2}]$ then
by~\eqref{eq:XNcontains}, almost surely
\begin{align*}
\Big\{X\uppar N _k (\epsilon^{-1/2}):k\in \{1,\ldots, N\}\Big\}
&\subseteq \Big\{X^+_u(\epsilon^{-1/2}):u\in \set+_{\epsilon^{-1/2}},\;
\big\|X^+_{u}(s)\big\|\leq R_\infty +\epsilon \,\,\forall s\in
[0,\epsilon^{-1/2}]\Big\}\\
&\subseteq \mathcal{C}_{\epsilon },
\end{align*}
where $\mathcal C_\epsilon$ is defined in the statement of Lemma~\ref{lem:sspdproof}.
Hence for any $A\subseteq \R^d$ measurable, $\mu\uppar N(A,\epsilon^{-1/2})\le \frac 1N |\mathcal C_\epsilon \cap A|$.
The result follows by Lemma~\ref{lem:sspdproof}.
\end{proof}
We can now use Proposition~\ref{prop:ssp} and Corollary~\ref{cor:sspdproof} to prove Theorem~\ref{thm:sspd}.
\begin{proof}[Proof of Theorem~\ref{thm:sspd}]
Take $K>0$, $c\in(0,1]$ and $\delta>0$. As the second statement of the theorem was already proved in
Proposition~\ref{prop:ssp}, it remains to prove that for $N$
and $t$ large enough, for $\mathcal X\in\Gamma(K,c)$ and $A\subseteq \R^d$
measurable,
$$
\P_{\mathcal X}\bigg(\Big|\mu\uppar N (A,t)-\int_A U(x)\,\diffd x\Big|\geq
\delta
\bigg)
<\delta .
$$
Take $\epsilon>0$ sufficiently small that Corollary~\ref{cor:sspdproof} holds and $\lceil \epsilon^{-1/2} \rceil \epsilon +\epsilon<\frac 12 \delta$,
and take $N_\epsilon$, $T_\epsilon$ as defined in Proposition~\ref{prop:ssp}.
Take
 $N\geq N_\epsilon$ 
sufficiently large that Corollary~\ref{cor:sspdproof} holds, and take
$t\geq T_\epsilon+\epsilon^{-1/2}$. Let $t_0=t-\epsilon^{-1/2}$. 

For an initial particle configuration $\mathcal X \in \Gamma(K,c)$ and
$A\subseteq \R^d$ measurable, we have by a union bound that
\begin{align*}
&\psub{\mathcal X}{\mu\uppar N (A ,t)-\int_A U(x)\,\diffd x \geq \delta } \notag \\
&\ \ \le 
\P_{\mathcal X}\bigg(M\uppar N_s \le R_\infty +\epsilon \, \forall s\in [t_0,t]\,, \,
\sup_{r\geq 0}\big|F\uppar N(r,t_0)-V(r)\big|\leq \epsilon\,,\,
  \mu\uppar N (A ,t)-\int_A U(x)\,\diffd x \geq \delta \bigg) \notag \\
&\qquad\quad+ \P_{\mathcal X}\bigg(\sup_{s\in [t_0,t]}M\uppar N _s >R_\infty +\epsilon\bigg)
+\P_{\mathcal X}\bigg(\sup_{r\geq 0}\big|F\uppar N(r,t_0)-V(r)\big|\geq \epsilon \bigg) \notag \\
&\ \  \leq \tfrac 12 \delta+ \lceil t-t_0 \rceil \epsilon +\epsilon,
\end{align*}
by~\eqref{eq:sspC} and~\eqref{eq:sspA} in Proposition~\ref{prop:ssp} for the last two terms (since $N\geq N_\epsilon$ and $t_0 = t-\epsilon^{-1/2} \geq T_\epsilon$), and by Corollary~\ref{cor:sspdproof} and the Markov property at time $t_0$ for the first term.  
Therefore, since $\lceil t-t_0 \rceil \epsilon +\epsilon = \lceil \epsilon^{-1/2} \rceil \epsilon +\epsilon<\frac 12 \delta$, it follows that for any $A\subseteq \R^d$ measurable,
\begin{align*}
\psub{\mathcal X}{\mu\uppar N (A,t)-\int_A U(x)\,\diffd x \geq \delta }
&\leq  \delta.
\end{align*}
As in~\eqref{eq:FAcomplement},
since $\mu\uppar N (A,t)+\mu\uppar N \big(\R^d\setminus A,t\big)=1$ and $\int _A U(x)\,\diffd x+ \int _{\R^d\setminus A}U(x)\,\diffd x=1$, we have that
\begin{align*}
&\psub{\mathcal X}{\mu\uppar N (A,t)- \int_A U(x)\,\diffd x \leq -\delta }
= \psub{\mathcal X}{\mu\uppar N \big({\R^d \setminus A},t\big)- \int_{\R^d \setminus A} U(x)\,\diffd x \geq \delta }\le\delta,
\end{align*}
which completes the proof.
\end{proof}

It remains to prove Theorems~\ref{thm:piNexists} and~\ref{cor:ssp}, which will follow easily from the following proposition.
\begin{prop} \label{prop:inv}
Take $t_0>1$ as in Proposition~\ref{prop:muNlargetime}.
For $N$ sufficiently large, for any $t_1\in (0,t_0]$, the Markov chain
$(X\uppar N (t_1 n) )_{n=0}^\infty$ is a positive recurrent strongly
aperiodic Harris chain.
\end{prop}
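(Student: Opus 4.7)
The plan is to exhibit a set $C\subseteq(\R^d)^N$, a probability measure $\nu$ on $(\R^d)^N$, and a constant $\alpha>0$ for which the one-step minorization $P^{t_1}(\mathcal X,\cdot)\ge\alpha\,\nu$ holds for every $\mathcal X\in C$ (giving the Harris small-set property, and because the minorization holds in one step, strong aperiodicity), and then to show the positive-recurrence bound $\sup_{\mathcal X\in C}\mathbb E_{\mathcal X}[\tau_C]<\infty$, where $\tau_C:=\inf\{n\ge 1:X\uppar N(nt_1)\in C\}$ and $P^{t_1}$ is the $t_1$-step transition kernel of the chain. Finite hitting time of $C$ from every initial condition will follow as a byproduct.

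For the minorization I will take $C=[\overline{\B}(R_\infty+1)]^N$ and $\nu$ the uniform probability measure on $C$. Given $\mathcal X\in C$, I will restrict to the event $E_0$ that no branching event occurs during $[0,t_1]$, which has probability $e^{-Nt_1}$ independently of $\mathcal X$. On $E_0$ the $N$ particles perform independent $\sqrt 2$-diffusive Brownian motions, so the law of $X\uppar N(t_1)$ admits the density $y\mapsto\prod_{i=1}^N\Phi_{t_1}(\mathcal X_i-y_i)$, where $\Phi_{t_1}$ denotes the $d$-dimensional Gaussian kernel of that Brownian motion at time $t_1$. For $\mathcal X,y\in C$ every distance $\|\mathcal X_i-y_i\|$ is at most $2(R_\infty+1)$, so, using the radial monotonicity of $\Phi_{t_1}$, each factor is bounded below by a positive constant depending only on $R_\infty,t_1,d$; this yields the claimed minorization with $\alpha$ a positive constant depending only on $N,t_1,R_\infty,d$.

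For positive recurrence I will combine Propositions~\ref{prop:muNlargetime} and~\ref{prop:ssp}. Setting $m:=\lceil t_0/t_1\rceil$, one has $mt_1\in[t_0,2t_0]$, so applying \eqref{eq:MC3} with $mt_1$ in place of $t_1$ (and noting that one $(mt_1)$-step amounts to $m$ steps of the $t_1$-chain) gives, for every $K>0$,
\[\sup_{\mathcal X\in\Gamma(K,c_0)}\mathbb E_{\mathcal X}\!\left[\inf\{n\ge 1:X\uppar N(nt_1)\in\Gamma(K_0,c_0)\}\right]<\infty.\]
Since every configuration lies in $\Gamma(\max_i\|\mathcal X_i\|+1,c_0)$, this controls the expected hitting time of $\Gamma(K_0,c_0)$ from any starting point, and in particular gives a uniform bound over $\mathcal X\in C\subseteq\Gamma(R_\infty+2,c_0)$. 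Next, I will apply Proposition~\ref{prop:ssp}\eqref{eq:sspC} with $K=K_0$, $c=c_0$ and $\epsilon=1/4$, which produces $T^\ast<\infty$ such that for every $\mathcal X'\in\Gamma(K_0,c_0)$ and every $t\ge T^\ast$, $\mathbb P_{\mathcal X'}(\sup_{s\in[0,1]}M\uppar N_{t+s}\le R_\infty+1)\ge 3/4$. Setting $n^\ast:=\lceil T^\ast/t_1\rceil$ then gives $\mathbb P_{\mathcal X'}(X\uppar N(n^\ast t_1)\in C)\ge 3/4$ uniformly in $\mathcal X'\in\Gamma(K_0,c_0)$; a standard geometric-trial argument combines this one-shot bound with the uniform return-to-$\Gamma(K_0,c_0)$ estimate above to yield $\sup_{\mathcal X\in C}\mathbb E_{\mathcal X}[\tau_C]<\infty$.

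The main technical obstacle is the geometric-trial step: on the event of a failed attempt the chain may sit outside $\Gamma(K_0,c_0)$, in a configuration whose maximum radius is \emph{a priori} unbounded, so the expected ``recovery'' cost before the next attempt must be controlled uniformly. I expect to handle this by combining the Gaussian tail estimate of Proposition~\ref{prop:muNcjump} (which bounds $\mathbb P_{\mathcal X'}(X\uppar N(n^\ast t_1)\notin\Gamma(K_0+j,c_0))$ by $4dNe^{T^\ast}e^{-j^2/(36dT^\ast)}$) with the explicit bound $1+5(3/2)^{\lceil K-K_0\rceil}$ on the expected hitting time of $\Gamma(K_0,c_0)$ from $\Gamma(K,c_0)$ that is extracted from the proof of Proposition~\ref{prop:muNlargetime}; summing over $j$ converges because the Gaussian tail decays faster than $(3/2)^j$ grows, producing a finite expected recovery time and closing the geometric-trial argument.
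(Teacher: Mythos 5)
Your proposal is correct and shares the same three-pronged structure as the paper's proof (one-step minorization on a ball, return of the chain to $\Gamma(K_0,c_0)$ with bounded expectation via Proposition~\ref{prop:muNlargetime}, and a high-probability entry into the small set from $\Gamma(K_0,c_0)$ via Proposition~\ref{prop:ssp}); the minorization argument is identical. The genuine difference lies in how you organize the renewal structure for positive recurrence. The paper works with the \emph{enlarged} set $\Lambda'=\Gamma\big(K_0\vee(R_\infty+1),c_0\big)$, which \emph{contains} the small set $\Lambda$: it tracks the successive return times $\tau(k)$ to $\Lambda'$, and at each return it checks whether $Y_{\tau(k)+n_1}\in\Lambda$. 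Because $\Lambda'=\Gamma(K,c_0)$ for a single fixed $K$, \eqref{eq:MC3} gives $\sup_{\mathcal Y\in\Lambda'}\E_{\mathcal Y}[\tau_{\Lambda'}]<\infty$ directly, and the chain is always in $\Lambda'$ at return times; the problem of controlling where the chain goes between attempts never arises, and the only subtlety is a measurability argument (showing $\{k^*>k-n_1\}\in\mathcal F_{\tau(k)}$) to make the geometric decay of $k^*$ rigorous. You instead run attempts directly from $\Gamma(K_0,c_0)$, a set that does \emph{not} contain $C$, which forces you to bound the expected ``recovery cost'' back to $\Gamma(K_0,c_0)$ from the a priori unbounded post-failure configuration. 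Your fix — combining the Gaussian tail from Proposition~\ref{prop:muNcjump} with the explicit $1+5(3/2)^{\lceil K-K_0\rceil}$ bound dug out of the \emph{proof} of Proposition~\ref{prop:muNlargetime}, and noting that the sub-Gaussian decay dominates the geometric growth — is valid, but it is more intricate and depends on an estimate internal to the proof of~\eqref{eq:MC3} rather than just its statement; the paper's choice of $\Lambda'$ buys you a cleaner argument at the cost of a less obvious initial set-up. A minor point: you invoke~\eqref{eq:sspC} where~\eqref{eq:sspB} (the single-time estimate the paper uses) is slightly more direct, though either suffices.
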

\noindent\textit{Remark:} This proposition will be used in the
proof of Theorem~\ref{thm:piNexists} in combination with Theorems~6.1
and~4.1 of~\cite{AN78}, which
say that a positive recurrent strongly aperiodic Harris chain admits
a unique invariant probability measure, and that the distribution of the
state of the Harris chain after $n$ steps converges to that invariant
probability measure as $n\to\infty$.
\begin{proof}
For $n\in \N_0$, let $Y_n=X\uppar N (t_1  n)$. 
We use a similar strategy to the proof of Proposition~3.1 in \cite{DR11}.
By~\cite{AN78}, to show that ($Y_n)_{n=0}^\infty$ is a recurrent
strongly aperiodic Harris chain, it suffices to show that there exists
a set $\Lambda\subseteq (\R^d)^N$ such that
\begin{enumerate}
\item $\psub{\mathcal X}{\tau_\Lambda<\infty}=1$ $\forall \mathcal X\in (\R^d)^N$, where 
$\tau_\Lambda =\inf\{n\geq 1:Y_n \in \Lambda\}$.
\item There exist $\epsilon>0$ and a probability measure $q$ on $\Lambda$ such that
$\psub{\mathcal X}{Y_1 \in C}\geq \epsilon q(C)$ for any $\mathcal X\in
\Lambda$ and $C\subseteq \Lambda$.
\end{enumerate}
To furthermore prove that the Harris chain is positive recurrent, we also need
to show that
\begin{enumerate}\setcounter{enumi}{2}
\item $\sup_{\mathcal X\in \Lambda}\Esub{\mathcal X}{\tau_\Lambda}<\infty$.
\end{enumerate}
We prove the proposition with the set
$$\Lambda=\big(\mathcal B(R_\infty +1)\big)^{ \otimes N}.$$
We start with the third point: showing that $\sup_{\mathcal X\in
\Lambda}\Esub{\mathcal X}{\tau_\Lambda}<\infty$.

Take $K_0=3$ and $c_0>0$ as in Proposition~\ref{prop:muNlargetime};
let
$\Lambda'=\Gamma(K_0\vee (R_\infty +1),c_0)\supset \Lambda$.
By~\eqref{eq:sspB} in Proposition~\ref{prop:ssp}, there exist $n_1,N_1<\infty$ such that for $N\geq N_1$, for
$\mathcal X \in \Lambda'$,
$$
\psub{\mathcal X}{M\uppar N_{n_1t_1}\ge R_\infty+1  }\leq \frac 12.
$$
Hence
\begin{equation} \label{eq:inv}
\sup_{\mathcal X\in \Lambda'} \psub{\mathcal X}{Y_{n_1}\notin \Lambda}\leq \frac 12.
\end{equation}
Let $\tau_{\Lambda'}=\inf\{n\geq 1:Y_n\in \Lambda'\}.$
Note that letting $t_2=\lceil t_0/t_1\rceil t_1$, we have that
 \[
 \tau_{\Lambda'}\leq \left\lceil \frac  {t_0}{t_1}\right\rceil \inf\big\{n\geq 1:X\uppar N(nt_2)\in \Gamma (K_0 \vee (R_\infty +1),c_0)\big\}.
 \]
Hence, since $t_2\in [t_0,2t_0]$, if $N$ is sufficiently large then by~\eqref{eq:MC2} and~\eqref{eq:MC3} in Proposition~\ref{prop:muNlargetime},
\begin{equation} \label{eq:corsspA}
\psub{\mathcal X}{\tau_{\Lambda'}<\infty }=1 \quad \forall \mathcal X\in
(\R^d)^N\qquad \text{and} \qquad \sup_{\mathcal X\in
\Lambda'}\Esub{\mathcal X}{\tau_{\Lambda'}}<\infty. 
\end{equation}

Let $\tau(0)=0$ and for $k\in \N$, let
$$
\tau(k)=\inf\{n \geq 1+\tau(k-1):Y_n \in \Lambda'\}.
$$
Then by~\eqref{eq:corsspA}, $(\tau(k))_{k=1}^\infty $  form  an increasing sequence of
almost surely finite times such that $Y_{\tau(k)}\in \Lambda'$. Notice that
$\tau(1)=\tau_{\Lambda'}$ and that
$$\tau_\Lambda\le \tau(k^*)+n_1\qquad\text{where}\quad k^*=\inf\big\{k\ge1:
Y_{\tau(k)+n_1}\in \Lambda\big\}.$$
It is therefore sufficient to show that $\sup_{\mathcal X \in \Lambda}\E_{\mathcal X}[\tau(k^*)]<\infty$
to establish that $\sup_{\mathcal X \in \Lambda} \E_{\mathcal
X}[\tau_\Lambda]<\infty$.

Write $(\mathcal F_n)_{n= 0}^\infty$ for the natural filtration of the Markov chain $(Y_n)_{n=0}^\infty$.
Notice that for $k\ge n_1$, the event $\{k^*> k-n_1\}=\{Y_{\tau(j)+n_1}\not\in \Lambda\ \forall 1\le
j\le k-n_1\}$ is measurable in $\mathcal F_{\tau(k)}$ since $\tau(k-n_1)+n_1
\le \tau(k)$.
Therefore by the strong Markov property, for $k\ge n_1$ and $\mathcal X\in
\Lambda'$, 
\begin{align}
\psub{\mathcal X}{k^*>k}=\psub{\mathcal X}{Y_{\tau(j)+n_1}\notin \Lambda \  \forall 1\le j\le k}
&\le \psub{\mathcal X}{k^*>k-n_1\,,\,
                       Y_{\tau(k)+n_1}\notin \Lambda}
\notag\\
&=\Esub{\mathcal X}{\indic{k^*>k-n_1} \ \psub{Y_{\tau(k)}}{Y_{n_1}\notin
\Lambda}}
\notag \\
&\le \frac12 \psub{\mathcal X}{k^*>k-n_1},
\notag
\end{align}
where we used~\eqref{eq:inv} in the last step.
Then, by an induction argument, for $k\in\N$ and  $\mathcal X\in \Lambda'$,
\begin{equation}
\psub{\mathcal X}{k^*>k}\le 2^{-\lfloor k/n_1\rfloor}. \label{eq:YnotA}
\end{equation}
In particular, $k^*$ is almost surely finite.
For $\mathcal X\in \Lambda'$, 
\begin{align}
\Esub{\mathcal X}{\tau(k^*)}
&=\sum_{k=1}^\infty \Esub{\mathcal X}{ \tau(k)\indic{k^*=k}}
=\sum_{k=1}^\infty \sum_{\ell=1}^k\Esub{\mathcal X}{
\big(\tau(\ell)-\tau(\ell-1)\big)\indic{k^*=k}}\notag\\
&=\sum_{\ell=1}^\infty \Esub{\mathcal X}{
\big(\tau(\ell)-\tau(\ell-1)\big)\indic{k^*\ge \ell}}.
 \label{eq:EtauAboundbis}\end{align}
Then, for $\mathcal X\in \Lambda'$ and $\ell\geq 1$, by~\eqref{eq:YnotA} and the strong Markov property,
\begin{align}
    \Esub{\mathcal X}{ \big(\tau(\ell)-\tau(\ell-1)\big)\indic{k^*\ge \ell}}
&\le \Esub{\mathcal X}{ \big(\tau(\ell)-\tau(\ell-1)\big)\indic{k^*>
\ell-1-n_1}}
\notag\\
& = \Esub{\mathcal X}{ \E_{\mathcal X}\big[\tau(\ell)-\tau(\ell-1) \;\big|\;
\mathcal F_{\tau(\ell-1)}\big]\indic{k^*> \ell-1-n_1}}
\notag\\
&\le \sup_{\mathcal Y\in \Lambda'}\E_{\mathcal Y}[\tau(1)]\ \P_{\mathcal X}(k^*>
\ell -1-n_1)
\notag\\
&\le 2^{-\lfloor (\ell-1-n_1)/n_1\rfloor}\sup_{\mathcal Y\in \Lambda'}\E_{\mathcal Y}[\tau(1)] . \notag
\end{align}
By substituting into \eqref{eq:EtauAboundbis}, we get that $\sup_{\mathcal X \in \Lambda'}\Esub{\mathcal
X}{\tau(k^*)}<\infty$, since we have that 
$\tau(1)=\tau_{\Lambda'}$ and, from~\eqref{eq:corsspA}, that
$\sup_{\mathcal Y\in \Lambda'}\E_{\mathcal    Y}[\tau_{\Lambda'}]<\infty$.
This implies that $\sup_{\mathcal X \in \Lambda'}\E_{\mathcal X}[\tau_\Lambda]<\infty$ and, in particular, $\sup_{\mathcal X \in \Lambda}\E_{\mathcal X}[\tau_\Lambda]<\infty$.

Proving the first of the three points at the beginning of the proof is now straightforward:
for $\mathcal X\in (\R^d)^N$, we have $\tau_{\Lambda'}<\infty$ a.s.\@ from \eqref{eq:corsspA} and, by the strong Markov property at time $\tau_{\Lambda'}$ and since $\psub{\mathcal X '}{\tau_\Lambda<\infty}=1$ for $\mathcal X' \in \Lambda'$,
$$
\psub{\mathcal X}{\tau_\Lambda<\infty}= 1.
$$

Finally, it remains to prove the second of the three points at the beginning of the proof. Note that conditional on the event that none of the particles in the $N$-BBM branch in the time interval $[0,t_1]$, the $N$ particles move according to independent Brownian motions.
Therefore, for $\mathcal X=(\mathcal X_1,\ldots, \mathcal X_N)\in \Lambda$ and $C\subseteq \Lambda$,
\begin{align*}
\psub{\mathcal X}{Y_1\in C}
&\geq e^{-t_1 N} \int_C \prod_{i=1}^N \left( \frac 1 {(4\pi t_1)^{d/2}} e^{-\frac 1 {4t_1}\|\mathcal X_i-y_i\|^2}\right)\diffd y_1\ldots \diffd y_N\\
&\geq e^{-t_1 N} (4\pi t_1)^{-Nd/2} e^{-N(R_\infty+1)^2/t_1}\text{Leb}(C),
\end{align*}
where $\text{Leb}(\cdot)$ is the Lebesgue measure on $(\R^d)^N$. Indeed,
for $y\in \Lambda$, $\|\mathcal X_i-y_i\|^2 \leq 4(R_\infty +1)^2$ $\forall i\in \{1,\ldots, N\}$.
Let
$$\epsilon 
=e^{-t_1 N} (4\pi t_1)^{-Nd/2} e^{-N(R_\infty+1)^2/t_1}\text{Leb}(\Lambda),
$$
and define a probability measure $q$ on $\Lambda$ by letting
$
q(C)=\text{Leb}(C)/\text{Leb}(\Lambda)
$
for $C\subseteq \Lambda$.
Then for $\mathcal X\in \Lambda$ and $C\subseteq \Lambda$,
$\psub{\mathcal X}{Y_1\in C}\geq \epsilon q(C)$.
The result follows.
\end{proof}
\begin{proof}[Proof of Theorem~\ref{thm:piNexists}]
By Proposition~\ref{prop:inv}, and by Theorems~6.1
and~4.1 in~\cite{AN78}, for $N$ sufficiently large, for $t_1\in(0,t_0]$, $(X\uppar N(nt_1))_{n=0}^\infty$
has a unique invariant measure $\pi_{t_1} \uppar N$ which is a probability
measure on $(\R^d)^N$, and for any $\mathcal X \in (\R^d)^N$, the law of $X\uppar N(nt_1)$ under
$\P_{\mathcal X}$ converges as $n\to\infty$ to $\pi_{t_1} \uppar N$ in total variation norm. In particular,
if $C\subseteq (\R^d)^N$ is measurable,
\begin{equation} \label{eq:corsspB}
\psub{\mathcal X}{X\uppar N (t_1 n)\in C } \to \pi_{t_1} \uppar N(C)
\quad \text{as }n\to \infty.
\end{equation}
Fix $N$ large enough for Proposition~\ref{prop:inv} to hold.
We begin by showing that
\begin{equation}\label{allmeaseq}
\pi_{t_1} \uppar N=\pi_{t_0} \uppar N=:\pi\uppar N\quad \forall t_1\in(0,t_0].
\end{equation}
Take $\mathcal X\in (\R^d)^N$,  and $C\subseteq (\R^d)^N$ a closed set.
Take $\delta>0$.
For $\epsilon>0$, let
\[
C^\epsilon = \big\{\mathcal Y\in (\R^d)^N : \inf_{\mathcal Z\in
C}\|\mathcal Y-\mathcal Z\|<\epsilon\big\}.
\]
Here $\|\mathcal Y -\mathcal Z \|$ denotes the Euclidean norm of $\mathcal Y -\mathcal Z $ regarded as a vector in $\R^{dN}$. 
Then $\pi\uppar N_{t_0}(C^\epsilon)\to \pi\uppar N_{t_0}(C)$ as $\epsilon\to 0$.
Take $\epsilon>0$ sufficiently small that 
\[ \pi\uppar N _{t_0}(C^\epsilon)<\pi\uppar N_{t_0} (C)+\tfrac 13 \delta. \]
It is easy to see that if $t_1 > 0$ is small enough, then
\begin{equation}
\P_{\tilde{\mathcal X}}\big(X\uppar N(s)\not\in C^\epsilon\big)<\tfrac 1 3 \delta 
\qquad\forall\ \tilde{\mathcal X}\in C,\ s\in[0,t_1].
%\psub{\tilde{\mathcal X}}{\|X\uppar N (s) -\tilde{\mathcal X} \|\geq \epsilon} < \frac{\delta}{3}
\label{eq:dontmovemuch}
\end{equation}
Indeed, the event that no particle branches on the time interval $[0,t_1]$ has probability $e^{-N t_1}$, which can be arbitrarily close to $1$ if $t_1$ is small enough. Conditioned on this event, the random process $Y(s)= X \uppar N (s) -X \uppar N (0)$ is a Brownian motion in $\R^{dN}$.  In particular, $Y(s)$ is almost surely continuous on $[0,t_1]$, with $Y_0 = 0$, and the law of $Y(s)$ does not depend on $X \uppar N (0)$ or $t_1$. Then $\P_{\tilde{\mathcal X}}\big(X\uppar N(s)\in C^\epsilon\big)\ge e^{-Nt_1}\P(\|Y(s)\|<\epsilon\ \forall s\in[0,t_1])$, which can be made arbitrarily close to $1$ by taking $t_1$ sufficiently small.

When choosing $t_1$ small enough for \eqref{eq:dontmovemuch}, we furthermore require that $t_0/t_1\in\N$. It is then clear from \eqref{eq:corsspB} than $\pi\uppar N_{t_1}=\pi\uppar N_{t_0}$.
Take $n_0\in \N$ sufficiently large that for $n\ge n_0$,
\[
\psub{\mathcal X}{X\uppar N (t_1 n)\in C^\epsilon }\le \pi_{t_0}\uppar N (C^\epsilon)+\tfrac 13 \delta 
\le \pi_{t_0}\uppar N (C)+\tfrac 23 \delta .
\]
For $t\ge t_1 n_0$ we have
\begin{align*}
\psub{\mathcal X}{X\uppar N(t)\in C}
&\le \psub{\mathcal X}{X\uppar N\big(\lceil t/t_1\rceil t_1 \big)\in C^\epsilon }
+\psub{\mathcal X}{X\uppar N(t)\in C, \, X\uppar N\big(\lceil t/t_1\rceil
t_1 \big)\notin C^\epsilon }\\
&\le \pi_{t_0}\uppar N(C)+\tfrac 23 \delta +
\psub{\mathcal X}{X\uppar N(t)\in C, \, X\uppar N\big(\lceil t/t_1\rceil
t_1 \big)\notin C^\epsilon }\\
&\le \pi_{t_0}\uppar N(C)+\delta
\end{align*}
by the Markov property at time $t$ and~\eqref{eq:dontmovemuch}.
Since $\delta>0$ was arbitrary, it follows that
\begin{equation}
\limsup_{t\to \infty}\psub{\mathcal X}{X\uppar N(t)\in C}\le \pi_{t_0}\uppar N(C).
\label{closedCcvg}
\end{equation}
Comparing \eqref{closedCcvg} and \eqref{eq:corsspB}, we see that
$\pi_{t_1}\uppar N(C)\leq \pi_{t_0}\uppar N(C)$ for all closed sets
$C$ and all $t_1\in(0,t_0]$.  Hence, by the Portmanteau theorem, $\pi_{t_1}\uppar N = \pi_{t_0}\uppar N$ for all $t_1\in(0,t_0]$.  This proves \eqref{allmeaseq}, so we now write $\pi \uppar N$ 
for the unique invariant measure of the process $(X\uppar N(t),t\ge 0)$.

By the Markov property, we have that for any $\mathcal X_0\in (\R^d)^N$, $D\subseteq (\R^d)^N$ and $t>s>0$,
\begin{align*}
\P_{\mathcal X_0}\big(X\uppar N(t)\in D\big)
	&=\int_{(\R^d)^N} 
\P_{\mathcal X_0}\big(X\uppar N(s)\in \diffd\mathcal X\big)
\P_{\mathcal X}  \big(X\uppar N(t-s)\in D\big)
\\
\text{and }\qquad \qquad  \pi\uppar N(D) &= \int_{(\R^d)^N} \pi\uppar N(\diffd \mathcal X) 
\P_{\mathcal X}\big(X\uppar N(t-s)\in D \big).
\end{align*}
By taking the difference between these two equations, 
\begin{align} \label{eq:TNV}
\Big| \P_{\mathcal X_0}\big(X\uppar N(t)\in D\big)
-\pi\uppar N(D) \Big|
&\le \int_{(\R^d)^N}  \Big|\P_{\mathcal X_0}\big(X\uppar N(s)\in\diffd\mathcal X\big)-
\pi\uppar N(\diffd \mathcal X) \Big|
\P_{\mathcal X}\big(X\uppar N(t-s)\in D\big) \notag \\
&\le \int_{(\R^d)^N}  \Big|\P_{\mathcal X_0}\big(X\uppar N(s)\in\diffd\mathcal X\big)-
\pi\uppar N(\diffd \mathcal X) \Big|,
\end{align}
where the right hand side is the total variation norm of the difference between $\pi\uppar N$ and the law of $X\uppar N(s)$ under $\P_{\mathcal X_0}$.

Now choose $s=\lfloor t/t_0\rfloor t_0$ and let $t\to \infty$.
Since the law of $X\uppar N(nt_0)$ under
$\P_{\mathcal X_0}$ converges to $\pi \uppar N$
as $n\to\infty$ in total variation norm, the right hand side of
\eqref{eq:TNV} converges to zero as $t\to\infty$, and the result follows.
\end{proof}
\begin{proof}[Proof of Theorem~\ref{cor:ssp}]
Take $\epsilon>0$ and $A\subseteq \R^d$ measurable. Let 
$$D_\epsilon =\left\{\mathcal X\in (\R^d)^N:\left|\frac 1 N\sum_{i=1}^N
\indic{\mathcal X_i\in A}-\int_A U(x)\,\diffd x\right|\ge\epsilon
\text{ or }\Big|\max_{i\le N}\|\mathcal X_i\|-R_\infty \Big|\ge\epsilon\right\}.$$
Take the initial condition $\mathcal X=(0,\ldots,0) \in(\R^d)^N$.
By Theorem~\ref{thm:sspd}, for any $\delta\in(0,\epsilon)$, there exist $N_\delta$, $T_\delta<\infty $ such that for $N\geq N_\delta$ and $t\geq T_\delta $,
\[
\P_{\mathcal X}\big( X\uppar N (t)\in D_\epsilon\big)\le
\P_{\mathcal X}\big( X\uppar N (t)\in D_{\delta}\big)<2\delta.
\]
But by Theorem~\ref{thm:piNexists},
\[
\P_{\mathcal X}\big( X\uppar N (t)\in D_\epsilon \big)\to \pi\uppar N(D_\epsilon) \quad \text{as }t\to \infty.
\]
It follows that $\pi\uppar N(D_\epsilon)\le 2\delta$ for $N\ge N_\delta$, and so
$\lim_{N\to \infty}\pi\uppar N(D_\epsilon) =0$, which completes the proof.
\end{proof}

\bibliography{globalrefs} 

\begin{thebibliography}{DMFPSL19b}

\bibitem[ABLT20]{ABLT20}
Louigi Addario-Berry, Jessica Lin, and Thomas Tendron.
\newblock Barycentric {Brownian} {Bees}.
\newblock {\em arXiv preprint arXiv:2006.04743}, 2020.

\bibitem[AFGJ16]{AFGJ16}
Amine Asselah, Pablo Ferrari, Pablo Groisman, and Matthieu Jonckheere.
\newblock Fleming-{V}iot selects the minimal quasi-stationary distribution: the
  {G}alton-{W}atson case.
\newblock {\em Ann. Inst. Henri Poincar\'{e} Probab. Stat.}, 52(2):647--668,
  2016.

\bibitem[AN78]{AN78}
Krishna Athreya and Peter Ney.
\newblock A new approach to the limit theory of recurrent {M}arkov chains.
\newblock {\em Transactions of the American Mathematical Society},
  245:493--501, 1978.

\bibitem[BBNP20]{BBNP1}
Julien Berestycki, \'Eric Brunet, James Nolen, and Sarah Penington.
\newblock A free boundary problem arising from branching {B}rownian motion with
  selection.
\newblock {\em arXiv preprint, arXiv:2005.09384}, 2020.

\bibitem[BBP19]{BBP}
Julien Berestycki, \'Eric Brunet, and Sarah Penington.
\newblock Global existence for a free boundary problem of {Fisher--KPP} type.
\newblock {\em Nonlinearity}, 32(10):3912--3939, 2019.

\bibitem[BDMM06]{BDMM06}
\'{E}ric Brunet, Bernard Derrida, Alfred Mueller, and St{\'e}phane Munier.
\newblock Noisy traveling waves: effect of selection on genealogies.
\newblock {\em Europhys. Lett.}, 76(1):1--7, 2006.

\bibitem[BDMM07]{BDMM07}
\'{E}ric Brunet, Bernard Derrida, Alfred Mueller, and St{\'e}phane Munier.
\newblock Effect of selection on ancestry: an exactly soluble case and its
  phenomenological generalization.
\newblock {\em Phys. Rev. E (3)}, 76(4):041104, 20, 2007.

\bibitem[Bec19]{Beck19}
Erin Beckman.
\newblock {\em Asymptotic Behavior of Certain Branching Processes}.
\newblock PhD thesis, Duke University, 2019.

\bibitem[BHM00]{BHM00}
Krzysztof Burdzy, Robert Ho{\l}yst, and Peter March.
\newblock A {F}leming-{V}iot particle representation of the {D}irichlet
  {L}aplacian.
\newblock {\em Comm. Math. Phys.}, 214(3):679--703, 2000.

\bibitem[BZ18]{BZ18}
Nathana\"{e}l Berestycki and Lee~Zhuo Zhao.
\newblock The shape of multidimensional {B}runet-{D}errida particle systems.
\newblock {\em Ann. Appl. Probab.}, 28(2):651--687, 2018.

\bibitem[CMSM13]{CMSM13}
Pierre Collet, Servet Mart\'{\i}nez, and Jaime San~Mart\'{\i}n.
\newblock {\em Quasi-stationary distributions}.
\newblock Probability and its Applications. Springer, Heidelberg, 2013.
\newblock Markov chains, diffusions and dynamical systems.

\bibitem[DMFPSL19a]{DMFPSL}
Anna De~Masi, Pablo Ferrari, Errico Presutti, and Nahuel Soprano-Loto.
\newblock Hydrodynamics of the {$N$}-{BBM} process.
\newblock In {\em Stochastic dynamics out of equilibrium}, volume 282 of {\em
  Springer Proc. Math. Stat.}, pages 523--549. Springer, 2019.

\bibitem[DMFPSL19b]{DMFPSL_EJP}
Anna De~Masi, Pablo Ferrari, Errico Presutti, and Nahuel Soprano-Loto.
\newblock Non local branching {B}rownian motions with annihilation and free
  boundary problems.
\newblock {\em Electronic Journal of Probability}, 24, 2019.

\bibitem[DR11]{DR11}
Rick Durrett and Daniel Remenik.
\newblock Brunet-{D}errida particle systems, free boundary problems and
  {W}iener-{H}opf equations.
\newblock {\em Ann. Probab.}, 39(6):2043--2078, 2011.

\bibitem[Eva10]{Evans2010}
Lawrence Evans.
\newblock {\em Partial differential equations}, volume~19 of {\em Graduate
  Studies in Mathematics}.
\newblock American Mathematical Society, second edition, 2010.

\bibitem[GJ13]{GJ13}
Pablo Groisman and Matthieu Jonckheere.
\newblock Front propagation and quasi-stationary distributions: the same
  selection principle?
\newblock {\em arXiv preprint arXiv:1304.4847}, 2013.

\bibitem[GJ18]{GJ18}
Pablo Groisman and Matthieu Jonckheere.
\newblock Front propagation and quasi-stationary distributions for
  one-dimensional {L}{\'e}vy processes.
\newblock {\em Electronic Comm. Probab.}, 23(93):1--11, 2018.

\bibitem[HR17]{HarrisRoberts17}
Simon Harris and Matthew Roberts.
\newblock The many-to-few lemma and multiple spines.
\newblock {\em Ann. Inst. Henri Poincar\'{e} Probab. Stat.}, 53(1):226--242,
  2017.

\bibitem[Jag89]{Jagers89}
Peter Jagers.
\newblock General branching processes as {M}arkov fields.
\newblock {\em Stochastic Process. Appl.}, 32(2):183--212, 1989.

\bibitem[Mai16]{Maillard16}
Pascal Maillard.
\newblock Speed and fluctuations of {$N$}-particle branching {B}rownian motion
  with spatial selection.
\newblock {\em Probab. Theory Related Fields}, 166(3-4):1061--1173, 2016.

\bibitem[Mas90]{Mass90}
P.~Massart.
\newblock The tight constant in the {D}voretzky-{K}iefer-{W}olfowitz
  inequality.
\newblock {\em Ann. Probab.}, 18(3):1269--1283, 1990.

\end{thebibliography}
\bibliographystyle{alpha}

\end{document}